\renewcommand{\Re}{\mathsf{Re}\,}
\renewcommand{\Im}{\mathsf{Im}\,}
\newcommand{\etalchar}[1]{$^{#1}$}
\newcommand{\referenza}{}
\newtheorem{thm}{Theorem}
\newtheorem*{thm*}{Theorem \referenza}
\newtheorem*{thms*}{Theorems \referenza}
\newtheorem{cor}[thm]{Corollary}
\newtheorem*{cor*}{Corollary \referenza}
\newtheorem{lem}[thm]{Lemma}
\newtheorem*{lem*}{Lemma \referenza}
\newtheorem{prop}[thm]{Proposition}
\newtheorem*{prop*}{Proposition \referenza}
\newtheorem{conj}[thm]{Conjecture}
\newtheorem*{conj*}{Conjecture \referenza}
\newtheorem{rmk}[thm]{Remark}
\newtheorem{defi}[thm]{Definition}
\numberwithin{equation}{section}
\def \R {\mathbb R}
\def \C {\mathbb C}
\def\frh{{\frak h}}
\def\XXint#1#2#3{{\setbox0=\hbox{$#1{#2#3}{\int}$ }
\vcenter{\hbox{$#2#3$ }}\kern-.6\wd0}}
\title{On Gauduchon connections with K\"ahler-like curvature}
\author[D. Angella]{Daniele Angella}
\address[D. Angella]{Dipartimento di Matematica e Informatica ``Ulisse Dini''\\
Universit\`a di Firenze\\
via Morgagni 67/A\\
50134 Firenze, Italy}
\email{daniele.angella@unifi.it}
\email{daniele.angella@gmail.com}
\author[A. Otal]{Antonio Otal}
\address[A. Otal]{Centro Universitario de la Defensa\,-\,I.U.M.A.\\
Academia General Militar\\
Ctra. de Huesca s/n. 50090 Zaragoza\\
Spain}
\email{aotal@unizar.es}
\author[L. Ugarte]{Luis Ugarte}
\address[L. Ugarte]{Departamento de Matem\'aticas\,-\,I.U.M.A.\\
Universidad de Zaragoza\\
Campus Plaza San Francisco\\
50009 Zaragoza, Spain}
\email{ugarte@unizar.es}
\author[R. Villacampa]{Raquel Villacampa}
\address[R. Villacampa]{Centro Universitario de la Defensa\,-\,I.U.M.A.\\
Academia General Militar\\
Ctra. de Huesca s/n. 50090 Zaragoza\\
Spain}
\email{raquelvg@unizar.es}
\keywords{Chern connection; Strominger-Bismut connection; Gauduchon connection; pluriclosed metric; balanced metric; Riemannian curvature; solvable Lie group}
\thanks{The first-named author is supported by Project PRIN ``Varietà reali e complesse: geometria, topologia e analisi armonica'', by Project FIRB ``Geometria Differenziale e Teoria Geometrica delle Funzioni'', by project SIR 2014 AnHyC ``Analytic aspects in complex and hypercomplex geometry'' code RBSI14DYEB, and by GNSAGA of INdAM.
The second-named, third-named, fourth-named authors are supported by the projects MTM2017-85649-P (AEI/FEDER, UE), and E22-17R “\'Algebra y Geometría”.
}
\subjclass[2010]{53C55; 53C05; 22E25; 53C30; 53C44}
\date{\today}
\begin{document}

\begin{abstract}
We study Hermitian metrics with a Gauduchon connection being ``K\"ahler-like", namely, satisfying the same symmetries for curvature as the Levi-Civita and Chern connections.
In particular, we investigate $6$-dimensional solvmanifolds with invariant complex structures with trivial canonical bundle and with invariant Hermitian metrics. The results for this case give evidence for two conjectures that are expected to hold in more generality: first, if the Strominger-Bismut connection is K\"ahler-like, then the metric is pluriclosed; second, if another Gauduchon connection, different from Chern or Strominger-Bismut, is K\"ahler-like, then the metric is K\"ahler.
As a further motivation, we show that the K\"ahler-like condition for the Levi-Civita connection
assures that the Ricci flow preserves the Hermitian condition along analytic solutions.
\end{abstract}

\maketitle

\setcounter{tocdepth}{2} \tableofcontents

\section*{Introduction}

We study Hermitian manifolds whose curvature tensor, with respect to a ``canonical" connection, satisfies further symmetries, as initiated by A. Gray \cite{gray}, and by B. Yang and F. Zheng \cite{yang-zheng}. Symmetries clearly include the case of flat curvature \cite{boothby, wang-yang-zheng, ganchev-kassabov, discala-vezzoni, yang-zheng-Flat, vezzoni-yang-zheng, he-liu-yang}.
This is ultimately related to Yau's Problem 87 \cite{yau-problem} concerning compact Hermitian manifolds with holonomy reduced to subgroups of
${\mathrm U}(n)$.

The setting is the following. Let $X$ be a complex manifold endowed with a Hermitian metric.
One may wonder which connection is more ``natural" to investigate the complex geometry of $X$.

On the one side, the {\em Levi-Civita connection} $\nabla^{LC}$ is the only torsion-free metric connection. In general, it does not preserve the complex structure, this condition forcing the metric to be K\"ahler. The absence of torsion yields that its curvature tensor $R^{LC}$ satisfies the symmetry called {\em first Bianchi identity}:
$$ \sum_{\sigma \in \mathfrak{G}} R^{LC}(\sigma x, \sigma y) \sigma z = 0 . $$

On the other side, the {\em Chern connection} $\nabla^{Ch}$ is another tool to investigate the differential complex geometry of $X$.
It is the only Hermitian connection (namely, that preserves both the metric and the complex structure,) being compatible with the Cauchy-Riemann operator on the holomorphic tangent bundle. This yields that its curvature tensor $R^{Ch}$ satisfies the {\em type condition} symmetry:
$$ R^{Ch} \in \wedge^{1,1}(X;\mathrm{End}(T^{1,0}X)) . $$

The condition $R^{Ch}=R^{LC}$ forces the metric to be K\"ahler, as proven in \cite[Theorem 1.1]{yang-zheng}.
In the same spirit, in \cite[Corollary 4.5]{liu-yang}, the authors prove that if the total scalar curvatures $s_{Ch} = s_{LC}$, then the metric is balanced.
Notwithstanding, there are plenty of non-K\"ahler Hermitian metrics such that either $R^{LC}$ or $R^{Ch}$ satisfies both the symmetries of the Levi-Civita curvature and the symmetries of the Chern curvature. Such metrics are called {\em G-K\"ahler-like}, respectively {\em K\"ahler-like} by B. Yang and F. Zheng \cite{yang-zheng}.

Notice that no locally homogeneous examples of compact complex Hermitian surfaces with odd first Betti number exist such that the Levi-Civita connection is K\"ahler-like, thanks to a result by Muškarov \cite{muskarov} that prevents $J$-invariant Ricci tensor.

We point out three remarks that motivate the (G-)K\"ahler-like conditions. First, they imply the cohomological property of the metric being {\em balanced} in the sense of Michelsohn \cite{michelsohn}, as proven in \cite[Theorem 1.3]{yang-zheng}. This property plays a role in the Strominger system, and this is a first motivation in \cite{yang-zheng}. Second, symmetries are clearly satisfied by zero curvature: classification results in case of certain specific connections are investigated in \cite{boothby, wang-yang-zheng, ganchev-kassabov, yang-zheng-Flat, vezzoni-yang-zheng}.
This is related to Yau's Problem~87 in \cite{yau-problem} on compact Hermitian manifolds with holonomy reduced to a subgroup
of ${\mathrm U}(n)$ (here the connection needs not to be the Levi-Civita connection).
Third, we prove that the K\"ahler-like condition for the Levi-Civita connection assures that the Ricci flow preserves the Hermitian condition along (analytic, compare \cite{kotschwar}) solutions:

\renewcommand{\referenza}{\ref{thm:ricci-flow-hermitian}}
\begin{thm*}
Let $g_0$ be a Hermitian metric on a compact complex manifold, and consider an analytic solution $(g(t))_{t\in(-\varepsilon,\varepsilon)}$ for $\varepsilon>0$ of the Ricci flow
\begin{equation}\label{eq:ricci-flow}\tag{RF}
\frac{d}{dt}g(t) = -\mathrm{Ric}(g(t)), \qquad g(0)=g_0 .
\end{equation}
If the Levi-Civita connection of $g_0$ is K\"ahler-like, then $g(t)$ is Hermitian for any $t$.
\end{thm*}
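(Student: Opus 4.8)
The plan is to exploit the analyticity in $t$ to reduce the statement to an infinite family of pointwise identities at $t=0$, and to establish these by induction using the K\"ahler-like symmetries. Recall that a symmetric $2$-tensor $h$ is Hermitian exactly when its $J$-anti-invariant part $h^{-}(X,Y):=\tfrac12(h(X,Y)-h(JX,JY))$ vanishes. Since $h\mapsto h^{-}$ is a fixed pointwise-linear operation commuting with $\partial_t$, and since $t\mapsto g(t)$ is real-analytic with $g(0)^{-}=0$, it suffices to show that $\partial_t^{k}g(t)^{-}|_{t=0}=0$ for every $k\ge 0$, equivalently that each derivative $\partial_t^{k}g(t)|_{t=0}$ is $J$-invariant. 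Once every time-derivative of $g(t)^{-}$ vanishes at $0$, the identity theorem for real-analytic functions on the connected interval $(-\varepsilon,\varepsilon)$ forces $g(t)^{-}\equiv 0$, which is the assertion.

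The base steps are $k=0$, which is the hypothesis that $g_0$ is Hermitian, and $k=1$, which asks that $\partial_t g|_{t=0}=-\mathrm{Ric}(g_0)$ be $J$-invariant. This is precisely where the K\"ahler-like condition enters: the K\"ahler-type symmetries of $R^{LC}$ force the Riemannian Ricci tensor to be $J$-invariant. (It is the failure of this very property that, through Mu\v{s}karov's theorem invoked above, obstructs the Levi-Civita connection from being K\"ahler-like on the relevant surfaces.) For the inductive step I would differentiate the flow, writing $\partial_t^{k+1}g=-\partial_t^{k}\mathrm{Ric}(g(t))$ and invoking the fact that along the Ricci flow the Ricci tensor itself satisfies a Lichnerowicz-type heat equation $\partial_t\mathrm{Ric}=\Delta_L\mathrm{Ric}+(\text{l.o.t.})$ (up to normalization), so that the $J$-invariance of $\partial_t^{k+1}g|_{t=0}$ is reduced to showing that the operators generating the higher time-derivatives of $\mathrm{Ric}$ map $J$-invariant tensors to $J$-invariant tensors at $t=0$.

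The hard part will be exactly this last reduction. The Lichnerowicz Laplacian $\Delta_L$ contains the rough Laplacian $\nabla^{*}\nabla$ of the Levi-Civita connection, and $\nabla^{*}\nabla$ preserves $J$-invariance only when $\nabla^{LC}J=0$, that is, in the K\"ahler case; for a genuinely non-K\"ahler Hermitian metric the defect $\nabla^{LC}J$ is nonzero, so $J$-invariance is not preserved term by term. The core of the argument must therefore be to show that the K\"ahler-like symmetries of $R^{LC}$ persist, to all orders at $t=0$, along the analytic family, and that they provide exactly the curvature cancellations needed to absorb the contributions of $\nabla^{LC}J$ appearing in $\Delta_L$ and in the higher variations of $\mathrm{Ric}$. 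Concretely, I would strengthen the induction hypothesis so as to propagate the K\"ahler-like curvature identities jointly with the Hermitian condition, deriving a coupled evolution system (with analytic coefficients) for the pair formed by the non-Hermitian defect $g(t)^{-}$ and the failure of the K\"ahler-like symmetries, and then prove that this system has vanishing $\infty$-jet at $t=0$; analyticity upgrades the vanishing jet to $g(t)^{-}\equiv 0$ on $(-\varepsilon,\varepsilon)$.
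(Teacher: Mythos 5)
Your opening reduction and base case match the paper's: you invoke analyticity to reduce the claim to the vanishing of all $t$-derivatives of the $J$-anti-invariant part of $g(t)$ at $t=0$, and you correctly note that the K\"ahler-like symmetries force $\mathrm{Ric}(g_0)$ to be $J$-invariant, which handles $k=1$. But from that point on the proposal is a plan rather than a proof: the inductive step, which is the entire substance of the theorem, is never carried out. You write that the core of the argument ``must'' be to show that the K\"ahler-like symmetries persist to all orders at $t=0$ and supply the cancellations absorbing $\nabla^{LC}J$, and you propose to package this as a coupled evolution system for the non-Hermitian defect and the failure of the symmetries, to be shown to have vanishing $\infty$-jet --- but no mechanism is given for why that jet vanishes; this is a restatement of the goal. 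Worse, the route you choose (the Lichnerowicz-type equation for $\mathrm{Ric}$ alone) runs head-on into the obstruction you yourself identify: the rough Laplacian does not preserve $J$-invariance term by term when $\nabla^{LC}J\neq 0$, and nothing in the proposal resolves this.

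The paper closes exactly this gap by carrying the induction on Hamilton's evolution equations for the \emph{full} Riemann tensor, not merely for the Ricci tensor. The K\"ahler-like hypothesis is recast, via \eqref{eq:type-v2} and \eqref{eq:symm}, as the statement that $R_{IJKL}(0)$ vanishes whenever three or more of the four indices have the same type. This property is stable under the operations appearing on the right-hand side of Hamilton's equations: the quadratic terms are contractions such as
\begin{equation*}
g(0)^{PR}\, g(0)^{QS}\, R(0)_{PIQJ}\, R(0)_{RKSL},
\end{equation*}
and since $g(0)$ is Hermitian its inverse only pairs indices of opposite type, so these contractions again vanish whenever three or more free indices share a type. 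Hence $\dot{R}_{IJKL}$ inherits the same type-vanishing, and inductively so do all higher time-derivatives of the curvature tensors at $0$; combined with $\dot{\mathrm{Ric}}_{jk}=0$ from the Ricci evolution equation, this gives $\partial_t^{k}g_{jk}|_{t=0}=0$ for every $k$, and analyticity concludes. The structural point your proposal misses is that restricting attention to the Ricci tensor discards precisely the information (the type decomposition of the full curvature) that makes the induction close; to complete your argument you would have to prove the propagation of the full curvature symmetries, which is what the paper's proof actually consists of.
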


In fact, there are infinitely-many Hermitian connections, namely, preserving both the metric and the complex structure, but possibly having torsion. Prescribing components of the torsion selects a special family of Hermitian connections $\nabla^{\varepsilon}$ called {\em canonical connections in the sense of Gauduchon} \cite{gauduchon-bumi}, varying $\varepsilon\in\mathbb R$. Such family includes and interpolates the Chern connection $\nabla^{Ch}=\nabla^{0}$ and the {\em Strominger-Bismut connection} $\nabla^{+}=\nabla^{\sfrac{1}{2}}$, both of which have a role in complex geometry and heterotic string theory \cite{strominger}.
In this note, we extend the notion of {\em K\"ahler-like} (Definition \ref{def:KL}) to the family $\{\nabla^{\varepsilon}\}_{\varepsilon\in\mathbb R}$ of Gauduchon connections.

Whereas the balanced condition is related to the Bott-Chern cohomology and to the Chern connection, the Strominger-Bismut connection is related to the Aeppli cohomology and to the notion of {\em pluriclosed} metrics (also known as SKT), see {\itshape e.g.} \cite{bismut}.
Therefore, as a Strominger-Bismut counterpart of \cite[Theorem 1.3]{yang-zheng} (see Theorem \ref{thm:yang-zheng-balanced}), the following conjecture is suggested:

\begin{conj}\label{conj:bismut-skt}
Consider a compact complex manifold endowed with a Hermitian metric.
If the Strominger-Bismut connection is K\"ahler-like, then the metric is pluriclosed.
\end{conj}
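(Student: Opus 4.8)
The plan is to reduce the statement to a purely local tensorial identity and then to upgrade it to a global conclusion by compactness. First I would fix a point $x\in X$ and a local unitary frame $\{e_1,\dots,e_n\}$ for $T^{1,0}X$ adapted to the Chern connection, writing $\omega = i\sum_i \varphi^i\wedge\bar\varphi^i$ in the dual coframe. Recall that the Strominger--Bismut connection $\nabla^{+}$ is the unique Hermitian connection whose torsion, lowered by the metric to a $3$-form, is totally skew; this $3$-form equals $H = -d^c\omega = i(\partial-\bar\partial)\omega$, and $dH$ is a nonzero constant multiple of $\partial\bar\partial\omega$, so the conjecture is \emph{equivalent} to the closedness $dH=0$. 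Writing the Bismut structure equations in the chosen frame expresses the components of $H$ through the Chern torsion coefficients $T^{i}_{jk}$ (which encode $\partial\omega$), so the whole problem becomes to prove that a prescribed first-order-plus-quadratic expression in $T$, namely the components of $\partial\bar\partial\omega$, vanishes identically.

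Next I would relate $R^{+}$ to the Chern curvature $R^{Ch}$ and the torsion. Since the two connections differ by an explicit tensor built from $T$, one has schematically
$$R^{+} = R^{Ch} + A(\nabla^{Ch}T) + Q(T,\bar T),$$
with $A$ linear in the covariant derivatives of the torsion and $Q$ quadratic in $T$. The K\"ahler-like hypothesis for $\nabla^{+}$ consists of two families of symmetries: the \emph{type condition}, that $R^{+}$ has no components outside $\wedge^{1,1}$ (so the pure-type pieces $R^{+}_{ijk\bar l}$, $R^{+}_{i\bar j\bar k\bar l}$ and their conjugates vanish), and the \emph{pair symmetry} $R^{+}_{i\bar j k\bar l}=R^{+}_{k\bar j i\bar l}=R^{+}_{i\bar l k\bar j}$. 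Because $R^{Ch}$ already lies in $\wedge^{1,1}$ and enjoys the Chern symmetry in the appropriate indices, substituting the formula above converts each of these into a tensorial equation relating $\nabla^{Ch}T$ and $Q(T,\bar T)$: the type-condition equations should force the $\bar\partial$-type covariant derivatives of $T$ to be expressible through quadratic torsion terms, while the pair-symmetry equations impose the genuinely new constraints.

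The core of the argument is then to combine these two families so as to eliminate the a priori unknown $R^{Ch}$ and isolate a closed statement about $T$ alone. I would contract the pair-symmetry equations against the metric and against $\bar T$ to obtain, on one side, that a suitable trace of the torsion is $\nabla^{+}$-parallel, and on the other a Bochner-type divergence identity whose zeroth-order term is a non-negative quadratic in the components of $dH$. Concretely, the goal is to exhibit the components of $\partial\bar\partial\omega$ as a contraction of the \emph{difference} of the two pair-symmetry expressions, so that the K\"ahler-like equations give $\partial\bar\partial\omega = \nabla^{+}(\text{first order})$; integrating over the compact manifold and using that the integral of a divergence vanishes should then collapse the first-order part and leave a sum of squares equal to zero, yielding $dH=0$ and, as a by-product, parallelism of the torsion and of $R^{+}$.

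I expect the main obstacle to be exactly this decoupling-and-integration step. The K\"ahler-like identities are numerous but entangled with $R^{Ch}$, and finding the precise linear combination that both cancels the Chern curvature and organizes the remaining zeroth-order terms into a manifestly non-negative expression is delicate; there is no reason a priori that the pointwise equations alone force $dH=0$, so the compactness argument seems essential, in analogy with how \cite[Theorem 1.3]{yang-zheng} exploits the cohomological (balanced) nature of the Chern case. This is presumably why the present paper does not attempt the general statement but instead verifies it on $6$-dimensional solvmanifolds, where invariance reduces the system to a finite-dimensional algebraic problem on the Lie algebra and the analytic step is bypassed; extending the conclusion beyond the homogeneous setting, that is, controlling $\nabla^{Ch}T$ globally, is the genuinely hard part.
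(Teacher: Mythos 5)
You should first be clear about the status of this statement: it is Conjecture~\ref{conj:bismut-skt}, and the paper does \emph{not} prove it. There is no proof in the paper for your outline to be measured against. What the paper provides is partial evidence: Theorem~\ref{thm:bismut-flat-skt} handles the special case of a \emph{flat} Strominger-Bismut connection, and it does so not by tensor manipulation but by invoking the structure theorem of \cite{wang-yang-zheng} to produce a local parallel unitary frame with Lie-algebra structure constants and a bi-invariant metric, then quoting \cite{SSTV88} that such metrics satisfy $\partial\overline\partial\omega=0$; and Theorem~\ref{thm:solv} verifies the conjecture on six-dimensional Calabi-Yau solvmanifolds by case-by-case symbolic computation over the classification of invariant complex structures. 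So the honest question is whether your proposal constitutes a proof on its own.

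It does not, and the gap is exactly where you locate it yourself. Your preliminary reductions are sound: the equivalence of pluriclosedness with $dH=0$ for $H=-d^c\omega$, the schematic decomposition $R^{+}=R^{Ch}+A(\nabla^{Ch}T)+Q(T,\bar T)$, and the translation of the type condition and the pair symmetry into equations relating $\nabla^{Ch}T$ and quadratic torsion terms are all correct in spirit. But the entire mathematical content of the conjecture is concentrated in the step you describe only as a goal: exhibiting the components of $\partial\overline\partial\omega$ as a specific contraction of the symmetry equations so that integration by parts yields a sum of squares. You assert that such a combination "should" exist and that a Bochner identity "should" collapse the first-order terms; no such identity is produced, and nothing in the outline guarantees the zeroth-order terms organize into a sign-definite expression. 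Moreover, your framing that compactness is essential is likely misdirected: the subsequent proof of this conjecture by Zhao and Zheng \cite{zhao-zheng} (cited in the paper's footnote) proceeds by purely pointwise tensor identities --- eliminating the Chern curvature via the Bianchi identity with torsion and solving the resulting first-order system --- with no integration argument and no compactness hypothesis. So the pointwise equations \emph{do} force $\partial\overline\partial\omega=0$, contrary to your expectation; what is missing from your proposal is precisely the local algebra that proves it.
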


Notice that the particular case of Strominger-Bismut-flat metrics is studied in \cite{wang-yang-zheng}, where it is proven that a compact Hermitian manifold whose Strominger-Bismut connection is flat admits, as finite unbranched cover, a local Samelson space, given by the product of a compact semisimple Lie group and a torus, \cite[Theorem 1]{wang-yang-zheng}.

Moreover, they prove in \cite[Theorem 2]{wang-yang-zheng} that a balanced Strominger-Bismut-flat metric is actually K\"ahler. It is known that Hermitian metrics that are both pluriclosed and balanced are in fact K\"ahler, see {\itshape e.g.} \cite[Remark~1]{alexandrov-ivanov} or \cite[Proposition 1.4]{fino-parton-salamon}.
In fact, it is conjectured that no compact non-K\"ahler complex manifold can admit at the same time a balanced metric and a pluriclosed metric, see \cite[Problem 3]{fino-vezzoni}. This conjecture has proven to be true, for example, for the twistor spaces of
compact anti-self-dual Riemannian manifolds \cite{verbitsky}, for compact complex manifolds in the Fujiki class $\mathcal C$ \cite{chiose}, for $\sharp_k \mathbb S^3 \times \mathbb S^3$ for $k\geq 2$ endowed with the complex structures constructed from the conifold transition \cite{fu-li-yau}, for nilmanifolds with invariant complex structures \cite{fino-parton-salamon, fino-vezzoni, fino-vezzoni-pams}.
So this last theorem of Wang, Yang, and Zheng is explained by the
fact that the flatness of the Strominger-Bismut connection implies that the metric is pluriclosed,
which gives a partial evidence for the above Conjecture~\ref{conj:bismut-skt}
(see Section~\ref{relations} for details).

The canonical connections by Gauduchon allow to tie the Chern and Strominger-Bismut connections. So, one may claim that a generic canonical connection shares properties with both of them. Again since no non-K\"ahler metric can be both balanced and pluriclosed, we conjecture the following, that extends \cite[Conjecture 1.3]{yang-zheng-Flat} on Gauduchon-flat connections.

\begin{conj}\label{conj:gauduchon-kahler-like}
Consider a compact complex manifold endowed with a Hermitian metric.
Consider a canonical connection in the Gauduchon family, different from the Strominger-Bismut connection and the Chern connection.
If it is K\"ahler-like, then the metric is K\"ahler.
\end{conj}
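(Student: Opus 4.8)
The plan is to force the K\"ahler condition by squeezing the metric between the two cohomological constraints attached to the endpoints of the Gauduchon line: balancedness, which is governed by the Chern connection, and pluriclosedness, which is governed by the Strominger-Bismut connection. Since a Hermitian metric that is simultaneously balanced and pluriclosed is automatically K\"ahler (see \emph{e.g.} \cite{alexandrov-ivanov, fino-parton-salamon}), it suffices to prove, for a fixed $\varepsilon\notin\{0,\tfrac12\}$, that a K\"ahler-like $\nabla^{\varepsilon}$ entails both conditions at once. This matches the heuristic that a generic canonical connection should inherit the features of both of its distinguished specializations.

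First I would write $\nabla^{\varepsilon}=\nabla^{Ch}+\varepsilon\,A$, where $A$ is the universal difference tensor built from the torsion of the Hermitian structure in the sense of \cite{gauduchon-bumi}, so that $A$ is linear in $\partial\omega$ and $\bar\partial\omega$. Differentiating and substituting the structure equations then produces an expansion
\[ R^{\varepsilon}=R^{Ch}+\varepsilon\,\nabla^{Ch}A+\varepsilon^{2}\,A\ast A, \]
in which the linear term involves the Chern-covariant derivative of the torsion and the quadratic term is purely algebraic. Contracting in a unitary coframe, each symmetry figuring in the definition of K\"ahler-like (Definition \ref{def:KL}) becomes a polynomial identity in $\varepsilon$ whose coefficients are tensors assembled from the torsion and $\nabla^{Ch}$ of the torsion.

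The crux is to play the two families of K\"ahler-like symmetries against one another. The type condition forces the $(2,0)$-component of $R^{\varepsilon}$ to vanish; since that component is absent for the Chern connection, the resulting identity carries an overall factor $\varepsilon$, and for $\varepsilon\neq0$ it should reduce to the vanishing of the pluriclosed defect $\partial\bar\partial\omega$, exactly as predicted by Conjecture \ref{conj:bismut-skt}. The interchange symmetry $R^{\varepsilon}_{i\bar\jmath k\bar l}=R^{\varepsilon}_{k\bar\jmath i\bar l}$, on the other hand, degenerates precisely at the Strominger-Bismut value $\varepsilon=\tfrac12$; subtracting that value exhibits an overall factor $\bigl(\varepsilon-\tfrac12\bigr)$, so that for $\varepsilon\neq\tfrac12$ it should yield the balanced trace relation underlying the Chern statement of \cite{yang-zheng}. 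With both the balanced and the pluriclosed conditions in hand, the metric must be K\"ahler.

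I expect the main obstacle to lie in controlling the first-order term $\nabla^{Ch}A$ on an arbitrary compact manifold: unlike the algebraic term $A\ast A$, it involves genuine derivatives of the metric, and closing the argument globally appears to require an integration-by-parts or maximum-principle input that is not yet available, which is exactly why the statement is only conjectural. To obtain unconditional evidence I would therefore specialize to the homogeneous setting of the present work: on a $6$-dimensional solvmanifold carrying an invariant complex structure with trivial canonical bundle, the classification of such structures reduces the admissible structure equations to a finite list, the invariant Hermitian metrics form a finite-dimensional family, and the covariant-derivative contributions become constant polynomials in the structure constants. The K\"ahler-like symmetries then collapse to a finite polynomial system in the metric parameters and in $\varepsilon$, which can be solved case by case, by hand or by computer algebra, to confirm that for $\varepsilon\notin\{0,\tfrac12\}$ the only solutions are the K\"ahler metrics.
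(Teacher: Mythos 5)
The statement you set out to prove is a conjecture: the paper does not prove it in general, and its entire content there is Theorem \ref{thm:solv} (refined in Theorem \ref{thm:solv-precise}), namely the verification on six-dimensional Calabi-Yau solvmanifolds with invariant Hermitian structures, obtained by reducing the symmetries \eqref{eq:bianchi1-v2} and \eqref{eq:type-v2} to polynomial systems in the metric parameters and $\varepsilon$ and solving them family by family with computer algebra. Your final paragraph proposes exactly this, so on that level the two agree. Note, however, that even in this restricted setting the paper's argument does not run through your ``squeeze'': it never shows that a K\"ahler-like $\nabla^{\varepsilon}$ with $\varepsilon\notin\{0,\frac12\}$ forces balancedness and pluriclosedness separately; the polynomial systems directly pin down the structures, leaving only $\mathfrak{h}_1$ and $\mathfrak{g}_2^0$ with diagonal (hence K\"ahler) metrics.

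The genuine gap is in the mechanism you propose for the general case, and it is more serious than the analytic difficulty you flag. From $R^{\varepsilon}=R^{Ch}+\varepsilon\,\nabla^{Ch}A+\varepsilon^{2}A\ast A$, the vanishing of the $(2,0)$-part gives, after dividing by $\varepsilon$, the condition $(\nabla^{Ch}A)^{(2,0)}+\varepsilon\,(A\ast A)^{(2,0)}=0$: this is still $\varepsilon$-dependent, not the $\varepsilon$-independent equation $\partial\overline\partial\omega=0$, so the claimed reduction to the pluriclosed defect does not follow. The same objection applies to the claimed overall factor $\left(\varepsilon-\frac12\right)$ in the interchange symmetry, and the paper's explicit computations contradict both factorization patterns: in Appendix \ref{app:BC-Ni} the Bianchi components for Family (Ni) carry factors $(2\varepsilon-1)^{2}$ and $(4\varepsilon-1)$ --- so the first canonical connection $\varepsilon=\frac14$ plays a distinguished role --- while $B^{\varepsilon}_{1\bar{1}2\bar{2}}$ and $B^{\varepsilon}_{1\bar{1}3\bar{3}}$ have no factor $(2\varepsilon-1)$ at all; in Family (Niii) the factor $4\varepsilon^{2}+4\varepsilon-1$, with irrational roots $\frac{-1\pm\sqrt2}{2}$, appears. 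Moreover, the correspondence goes the opposite way from your scheme: at $\varepsilon=\frac12$ it is precisely the Bianchi symmetry (e.g. $B^{\frac12}_{1\bar{1}2\bar{2}}$) whose vanishing encodes pluriclosedness, not balancedness. Finally, observe that if your two factorizations did hold, then K\"ahler-likeness for a single $\varepsilon\notin\{0,\frac12\}$ would yield balanced and pluriclosed simultaneously, and both Conjecture \ref{conj:bismut-skt} and Conjecture \ref{conj:gauduchon-kahler-like} would follow at once from \cite{alexandrov-ivanov, fino-parton-salamon}; since both remain open (with partial progress in \cite{zhao-zheng, fu-zhou}), this is already a strong signal that the algebraic skeleton of your general argument fails before the analytic issue with $\nabla^{Ch}A$ even arises.
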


We test these conjectures on a class of compact complex manifolds of complex dimension three given by quotients of Lie groups\footnote{
After the appearence of our note, recent progress on the Conjectures \ref{conj:bismut-skt} and \ref{conj:gauduchon-kahler-like} has been made by Zhao and Zheng \cite{zhao-zheng} and by Fu and Zhou \cite{fu-zhou}, respectively.
}. More precisely, we restrict to {\em Calabi-Yau solvmanifolds}, that is, compact quotients of solvable Lie groups by discrete subgroups,
endowed with an invariant complex structure having a non-zero invariant closed (3,0)-form,
which implies that the canonical bundle is holomorphically-trivial. We choose this class because of their role {\itshape e.g.} in constructing explicit invariant solutions to the Strominger system with respect to a Gauduchon connection \cite{fei-yau, OUV-strominger}.
They will be equipped with an invariant Hermitian metric. When the group is nilpotent, we get {\em nilmanifolds} and they always satisfy the latter condition on triviality of the canonical bundle by \cite{salamon, barberis-dotti-verbitsky}. Invariant complex structures on $6$-dimensional nilmanifolds are ultimately classified by \cite{couv}, see also the references therein, and they never admit K\"ahler metrics, unless the
nilmanifold be a torus, by topological obstructions \cite{benson-gordon, hasegawa}. Invariant complex structures on $6$-dimensional solvmanifolds with trivial canonical bundle are classified by \cite{fino-otal-ugarte}.

The following theorem is a consequence of direct inspection performed with the help of the symbolic computation softwares, Sage \cite{sage} and Mathematica.

\renewcommand{\referenza}{\ref{thm:solv}}
\begin{thm*}\label{thm:solvmanifolds}
Consider the class of compact $6$-dimensional solvmanifolds endowed with an invariant complex structure with holomorphically-trivial canonical bundle and with an invariant Hermitian metric.
\begin{itemize}
\item If the Strominger-Bismut connection is K\"ahler-like, then the metric is pluriclosed.
\item If a canonical connection in the Gauduchon family, different from the Strominger-Bismut connection and the Chern connection, is K\"ahler-like, then the metric is K\"ahler.
\end{itemize}
Therefore, Conjectures \ref{conj:bismut-skt} and \ref{conj:gauduchon-kahler-like} are satisfied for this class of manifolds.
\end{thm*}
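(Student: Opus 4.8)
The statement is asserted to follow from \emph{direct inspection}, so the plan is to make that inspection systematic and to organize it around the two classification results quoted above. First I would reduce everything to the Lie algebra. Since the complex structure, the metric, and hence every Gauduchon connection $\nabla^{\varepsilon}$ together with its curvature $R^{\varepsilon}$ are invariant, the whole problem descends to the underlying Lie algebra $\frh=\mathrm{Lie}(G)$: the K\"ahler-like condition, the pluriclosed condition $\partial\bar\partial\omega=0$, and the K\"ahler condition $d\omega=0$ all become \emph{algebraic} identities among the structure constants and the metric coefficients. I would then invoke the classification of invariant complex structures with holomorphically-trivial canonical bundle — by \cite{couv} in the nilpotent case and by \cite{fino-otal-ugarte} in the solvable case — to replace the manifold by a finite list of model Lie algebras, each presented through explicit structure equations for $d\omega^i$ in a $(1,0)$-coframe $\{\omega^1,\omega^2,\omega^3\}$ adapted so that $J$ is the standard almost complex structure. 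On each such model the generic invariant Hermitian metric is parametrized by a positive-definite Hermitian $3\times 3$ matrix, i.e. by finitely many real parameters, and the fundamental form $\omega$ is the associated $(1,1)$-form.

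Next I would compute $R^{\varepsilon}$ explicitly as a function of these parameters and of $\varepsilon$. The cleanest route is to write $\nabla^{\varepsilon}$ through its torsion — as the Chern connection $\nabla^{Ch}$ corrected by $\varepsilon$ times the fixed torsion tensor determined by $d\omega$ — so that $R^{\varepsilon}$ is a polynomial in the structure constants, the metric coefficients, and $\varepsilon$. The K\"ahler-like condition of Definition~\ref{def:KL} then translates into a finite system of polynomial equations expressing the two families of symmetries: the type condition $R^{\varepsilon}\in\wedge^{1,1}$ and the Levi-Civita-type symmetries, i.e. the first Bianchi identity together with the exchange symmetries of the K\"ahler curvature tensor. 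Solving these systems — by Gr\"obner-basis elimination in the metric parameters, carried out in Sage \cite{sage} and Mathematica — yields, for each model, the complete set of K\"ahler-like metrics, with $\varepsilon$ kept as a symbolic parameter so that the Chern locus $\varepsilon=0$ and the Strominger-Bismut value $\varepsilon=\sfrac{1}{2}$ can be read off and isolated.

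It then remains to interpret the solutions. For $\varepsilon=\sfrac{1}{2}$ I would check, case by case on the solution set, that the surviving metrics satisfy $\partial\bar\partial\omega=0$, which establishes the first bullet. For a value $\varepsilon\notin\{0,\sfrac{1}{2}\}$ I would check that the only K\"ahler-like solutions have $d\omega=0$, establishing the second bullet; here it is often convenient to combine two partial outputs of the computation — that such a K\"ahler-like metric is simultaneously balanced (in the spirit of Theorem~\ref{thm:yang-zheng-balanced}) and pluriclosed — and then invoke the fact that a metric that is at once balanced and pluriclosed must be K\"ahler \cite{alexandrov-ivanov, fino-parton-salamon}.

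The main obstacle is not conceptual but organizational: the curvature of a Gauduchon connection is a bulky polynomial, the number of model Lie algebras is large, and on several of them the metric carries many free parameters, so the elimination must be arranged carefully — keeping $\varepsilon$ symbolic throughout — to remain tractable and, above all, to guarantee that no branch of the solution variety is overlooked. Verifying the completeness of the case analysis, rather than any single curvature computation, is where the real work lies.
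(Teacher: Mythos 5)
Your proposal follows essentially the same route as the paper: both reduce everything to the Lie-algebra level, run through the classification of invariant complex structures from \cite{couv} and \cite{fino-otal-ugarte} with the generic invariant metric \eqref{eq:metric}, compute the Gauduchon curvatures symbolically (Sage/Mathematica), impose the K\"ahler-like symmetries in the form of Remark~\ref{caracterizacionKL} as polynomial equations in the metric parameters and $\varepsilon$, and then verify pluriclosedness (for $\varepsilon=\sfrac{1}{2}$), respectively K\"ahlerness (for $\varepsilon\notin\{0,\sfrac{1}{2}\}$), of the surviving solutions case by case. The only difference is tactical rather than structural: the paper solves the systems by isolating a few decisive curvature components in each family (using \cite[Theorem 1.3]{yang-zheng} to impose balancedness only in the Chern case, which is not part of the statement at hand) instead of full Gr\"obner-basis elimination with symbolic $\varepsilon$, but this does not change the substance of the argument.
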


A more detailed statement can be found in Theorem \ref{thm:solv-precise}, and Proposition~\ref{prop:lc} finalizes the study showing that, on six-dimensional solvmanifolds with holomorphically-trivial canonical bundle endowed with an invariant Hermitian non-K\"ahler metric, the Levi-Civita connection is never K\"ahler-like.

In particular, we observe here some evidence following from the performed study.

Holomorphically-parallelizable complex structures are Chern-flat and then their Chern connection is clearly K\"ahler-like.
Conversely, by \cite{boothby}, compact Hermitian Chern-flat manifolds are given by quotients of complex Hermitian Lie groups.
We have examples of non-Strominger-Bismut-flat metric whose Strominger-Bismut connection is K\"ahler-like.

The examples here confirm \cite[Conjecture 1.3]{yang-zheng-Flat}, stating that compact Hermitian manifolds with a flat Gauduchon connection (different from Chern or Strominger-Bismut) must be K\"ahler. The analogous question for invariant Hermitian structures on Lie groups is asked and investigated in \cite[Question 1.1]{vezzoni-yang-zheng}, and our result gives further evidence for a positive answer, since our computations are also valid on Lie groups endowed with left-invariant structures (see Remark \ref{rmk:non-cpt}).

On a compact complex manifold, being balanced is a necessary condition for either the Levi-Civita or the Chern connections being K\"ahler-like, thanks to \cite[Theorem 1.3]{yang-zheng}, but the converse does not hold true, as many examples below also show. It is expected that compact complex manifolds satisfying the $\partial\overline\partial$-Lemma admit balanced metrics. We ask {\itshape whether the $\partial\overline\partial$-Lemma property has any relation with the K\"ahler-like property}.

According to \cite[Problem 3]{fino-vezzoni}, no compact complex non-K\"ahler manifold is expected to bear both a balanced metric and a pluriclosed metric. Then, according to \cite[Theorem 1.3]{yang-zheng} and Conjecture \ref{conj:bismut-skt}, we expect that {\em a compact complex manifold admitting both a Hermitian metric with K\"ahler-like Chern connection and a (possibly different) Hermitian metric with K\"ahler-like Strominger-Bismut connection admits a K\"ahler metric}.
By Theorem~\ref{thm:solv},
this is obviously confirmed in the class of Calabi-Yau solvmanifolds of complex dimension $3$.
More in general, we ask {\itshape whether it is possible to admit two different connections
(with respect to possibly different metrics) being both K\"ahler-like}.

The Iwasawa manifold and its small deformations show that the property of being K\"ahler-like with respect to the Chern connection is not open under deformations of the complex structures.
It is not even closed, {\itshape i.e.} the central limit of a holomorphic family of compact Hermitian manifolds with the K\"ahler-like property may not admit any Hermitian metric satisfying the K\"ahler-like property with respect to the Chern connection, see Corollary~\ref{cor:kl-non-closed}. We ask {\itshape whether the K\"ahler-like property with respect to the Strominger-Bismut connection (or, more generally, any other Gauduchon connection,) is open and/or closed by holomorphic deformations of the complex structure}.

\bigskip

{\small
\noindent {\itshape Acknowledgments.}
This work has been written during several stays of the first-named author at the Departamento de Matem\'aticas de la Universidad de Zaragoza, which he thanks for the warm hospitality. He thanks also Jose Fernando for his warm hospitality at the Facultad de Matem\'aticas of the Universidad Complutense de Madrid.
The authors would like to thank Francesco Pediconi, Fabio Podestà, Cristiano Spotti, and Luigi Vezzoni for several interesting discussions.
Many thanks also to the anonymous Referees for their useful comments and suggestions.
}

\section{K\"ahler-like connections}

\subsection{Curvatures}
Let $X$ be a manifold endowed with a complex structure $J$ and a Hermitian structure $h=g-\sqrt{-1}\omega$.

Let $\nabla$ be any linear {\em metric} connection on $X$,  that is, preserving the Riemannian metric $g$, namely, $\nabla g=0$. (Denote with the same symbol its $\C$-bi-linear extension to $TX\otimes_\R\C$.) Consider its curvature operator:
$$ R^\nabla(x,y) := \nabla^2(x,y) = [\nabla_x,\nabla_y] - \nabla_{[x,y]} , $$
and define also the $(4,0)$-tensor
$$ R^\nabla(x,y,z,w) := g(R^\nabla(x,y)z,w) . $$
Notice that, by the very definition, respectively by the condition $\nabla g=0$,
$$ R^\nabla(\_,\_) \in \wedge^2(X;\mathrm{End}(TX))
\qquad
\text{ and }
\qquad
R^\nabla(\_,\_,\_,\_) \in \wedge^2X\otimes\wedge^2X .$$

The {\em first Ricci curvature} and the {\em second Ricci curvature} are, respectively, the traces
$$ \mathrm{Ric}^{(i)}(x,y) = \mathrm{tr}\,  R^\nabla(x,y) \in \wedge^{2}X, $$
$$ \mathrm{Ric}^{(ii)}(z,w) = \mathrm{tr}_g  R^\nabla(\_,\_,z,w) \in \wedge^2X \subset \mathrm{End}(TX) . $$
The {\em scalar curvature} is
$$ \mathrm{Scal} = \mathrm{tr}_g \mathrm{Ric}^{(i)} = \mathrm{tr}\, \mathrm{Ric}^{(ii)} \in\mathcal{C}^\infty(X;\R) . $$

\subsection{Gauduchon connections}\label{subsec:gauduchon-connections}
We  focus on the {\em canonical connections} in the Gauduchon family as defined in \cite[Definition 2]{gauduchon-bumi}: for $t\in\mathbb R$, the  Hermitian connection $\nabla^{G_t}=:\nabla^{\frac{1-t}{4}, \frac{1+t}{4}}$ associated to $(J,g)$ is defined by
$$ g(\nabla^{G_t}_xy, z) = g(\nabla^{LC}_xy, z) + \frac{1-t}{4}\, T(x,y,z) + \frac{1+t}{4}\, C(x,y,z) , $$
where
$$ T := Jd \omega := -d \omega(J\_,J\_,J\_), \qquad C := d \omega(J\_,\_,\_) , $$
and $\nabla^{LC}$ denotes the Levi-Civita connection, that we can compute by
\begin{eqnarray*}
g(\nabla^{LC}_xy, z)
&=& \frac{1}{2}\,\left(
x\,g(y,z) + y\,g(x,z) - z\,g(x,y) \right. \\[5pt]
&& \left. + g([x,y],z) - g([y,z],x) - g([x,z],y) \right) .
\end{eqnarray*}
Note that, in the space of metric connections $\nabla^{\varepsilon,\rho}$ as defined in \cite{OUV-strominger}, Gauduchon connections sit on the line $\varepsilon+\rho=\frac{1}{2}$.

Other than
$$ \nabla^{LC} = \nabla^{0,0} , $$
special values are
$$
\nabla^{Ch} = \nabla^{G_1} = \nabla^{0,\frac{1}{2}} ,
\qquad
\nabla^+ = \nabla^{G_{-1}} = \nabla^{\frac{1}{2},0} ,
\qquad
\nabla^- = \nabla^{-\frac{1}{2},0}
$$
corresponding to the Chern, Strominger-Bismut, and anti-Strominger-Bismut connections, respectively.
Moreover,
$$\nabla^{G_0}=\nabla^{\frac{1}{4},\frac{1}{4}}=:\nabla^{lv}$$
is the {\em first canonical connection}, also called {\em associated connection} \cite{gauduchon-bumi, ganchev-kassabov}, namely the projection onto the holomorphic tangent bundle of the Levi-Civita connection.
Another important connection is given by
$$\nabla^{G_{\sfrac{1}{3}}},$$
called the {\em minimal Gauduchon connection} \cite{gauduchon-bumi, yang-zheng-Flat} because it is distinguished by the property that it has the smallest total torsion among Gauduchon connections.

\subsection{Symmetries of the curvature}

If we focus on the {\em Levi-Civita connection} $\nabla^{LC}$, (namely, the unique metric torsion-free connection on $X$,) respectively on the {\em Chern connection} $\nabla^{Ch}$, (namely, the unique Hermitian connection being compatible with the Cauchy-Riemann operator $\overline\partial$ on the holomorphic tangent bundle,) we have further symmetries.

The Levi-Civita connection $\nabla:=\nabla^{LC}$ satisfies the {\em first Bianchi identity}:
\begin{equation}\label{eq:bianchi1}\tag{1Bnc}
\sum_{\sigma\in\mathfrak{G}} R^{\nabla}(\sigma x,\sigma y)\sigma z = 0 .
\end{equation}
(This follows by the Levi-Civita connection being torsion-free, where the torsion is defined as
$$ T^\nabla(x,y) := \nabla_xy-\nabla_yx-[x,y] , $$
and by the Jacobi identity for the Lie bracket $[\_,\_]$.)
More in general, for a metric connection $\nabla$ with possibly non-zero torsion $T^\nabla$, we have (compare {\itshape e.g.} \cite[\S1.16]{gauduchon-book}):
$$ \sum_{\sigma\in\mathfrak{G}} R^{\nabla}(\sigma x,\sigma y)\sigma z = d^\nabla T(x,y,z) . $$

The Chern connection $\nabla:=\nabla^{Ch}$ satisfies the {\em type condition}:
$$ R^{\nabla} \in \wedge^{1,1}(X;\mathrm{End}(T^{1,0}X)) , $$
namely,
\begin{equation}\label{eq:type}\tag{Cplx}
R^{\nabla}(x,y,z,w) \;=\; R^{\nabla}(x,y,Jz,Jw) \;=\; R^{\nabla}(Jx,Jy,z,w)\;.
\end{equation}
(The $J$-invariance in the third and fourth arguments, namely the property $R^\nabla\in\wedge^2(X;\mathrm{End}(T^{1,0}X))$, follows from $\nabla J=0$ and $g(J\_,J\_)=g(\_,\_)$. The  conclusion follows from $\nabla^{0,1}=\overline\partial$ yielding $(\nabla^{0,1})^2=0$ and by $\nabla$ being real.)

\begin{rmk}
Condition \eqref{eq:bianchi1} for a metric connection implies
\begin{equation}\label{eq:symm}\tag{Symm}
R^\nabla \in S^2\wedge^2X ,
\end{equation}
namely, $R^{\nabla}(x,y,z,w) = R^{\nabla}(z,w,x,y)$.
In this case, the first Bianchi identity holds for the $(4,0)$-tensor $R^\nabla$ for permutations of any triple of indices.
If moreover $R^\nabla \in \wedge^{2}(X;\mathrm{End}(T^{1,0}X))$, then actually $R^\nabla \in \wedge^{1,1}(X;\mathrm{End(T^{1,0}X))}$.
\end{rmk}

\subsection{K\"ahler-like symmetries}
On a K\"ahler manifold, we have
$$ \nabla^{LC} \;=\; \nabla^{Ch} \;, $$
since $\nabla^{LC}J=0$. In particular, $\nabla := \nabla^{LC}$ satisfies both \eqref{eq:bianchi1} and \eqref{eq:type}.

\begin{defi}\label{def:KL}
Let $X$ be a complex manifold endowed with a Hermitian structure. Let $\nabla$ be a metric connection on it.
We say that $\nabla$ is {\em K\"ahler-like} if it satisfies \eqref{eq:bianchi1} and \eqref{eq:type}.
\end{defi}

\begin{rmk}[comparison with Yang and Zheng's \cite{yang-zheng}]
We claim that the above definition specializes to Yang and Zheng's definitions in \cite{yang-zheng} in case of Chern connection and Levi-Civita connection. Recall that, in \cite[page 2]{yang-zheng}, B. Yang and F. Zheng call a Hermitian structure:
\begin{itemize}
\item {\em K\"ahler-like (in the sense of \cite{yang-zheng})} if, for any $(1,0)$-tangent vector $X$, $Y$, $Z$, and $W$, it holds
$$ R^{Ch}(X, \bar Y, Z, \bar W) \;=\; R^{Ch}(Z, \bar Y, X, \bar W) \;; $$
\item {\em G-K\"ahler-like (in the sense of \cite{yang-zheng})}, in honour of Gray, if, for any $(1,0)$-tangent vector $X$, $Y$, $Z$, and $W$, it holds
$$ R^{LC}(X, Y, \bar Z, \bar W) \;=\; R^{LC}(X, Y, Z, \bar W) \;=\; 0 \;. $$
\end{itemize}
Then:
\begin{enumerate}
\item a Hermitian structure is K\"ahler-like in the sense of \cite{yang-zheng} if and only if its Chern connection is K\"ahler-like in the sense of Definition~\ref{def:KL};
\item a Hermitian structure is G-K\"ahler-like in the sense of \cite{yang-zheng} if and only if its Levi-Civita connection is K\"ahler-like in the sense of Definition~\ref{def:KL}.
\end{enumerate}
\end{rmk}

\begin{proof}
{\itshape (1)} Assume that the Chern connection $\nabla^{Ch}$ is K\"ahler-like in the sense of Definition \ref{def:KL}. In particular, by \eqref{eq:bianchi1}, we get:
\begin{eqnarray*}
R^{Ch}(X, \bar Y, Z, \bar W) &=& -R^{Ch}(\bar Y, Z, X, \bar W) - R^{Ch}(Z, X, \bar Y, \bar W) \\[5pt]
&=& R^{Ch}(Z, \bar Y, X, \bar W) + g(R^{Ch}(X, Z)\bar Y, \bar W) \\[5pt]
&=& R^{Ch}(Z, \bar Y, X, \bar W) ,
\end{eqnarray*}
since $g(R^{Ch}(X, Z)\bar Y, \bar W)=0$.

Conversely, assume that the Hermitian structure is K\"ahler-like in the sense of \cite{yang-zheng}. We already noticed that the Chern connection satisfies \eqref{eq:type}. We prove \eqref{eq:bianchi1}. It suffices to prove it for
$$ (x,y,z,w) \in \{ (\bar X, Y, Z, \bar W),\, (X, \bar Y, Z, \bar W),\, (X, Y, \bar Z, \bar W) \} , $$
where $X$, $Y$, $Z$, and $W$ are $(1,0)$-tangent vector fields. For example, in the first case, we have:
\begin{eqnarray*}
\lefteqn{ R^\nabla(\bar X, Y, Z, \bar W)+R^\nabla(Y, Z, \bar X, \bar W)+R^\nabla(Z, \bar X, Y, \bar W) } \\[5pt]
&=& -R^\nabla(Y, \bar X, Z, \bar W)+R^\nabla(Z, \bar X, Y, \bar W) = 0 ,
\end{eqnarray*}
where we used the skew-symmetries of $R^\nabla$, the type properties of $R^\nabla$, and the condition of K\"ahler-like in the sense of \cite{yang-zheng}.

\medskip

{\itshape (2)} Assume that the Levi-Civita connection $\nabla^{LC}$ is K\"ahler-like in the sense of Definition \ref{def:KL}. In particular, by \eqref{eq:type}, we get: for any $(1,0)$-tangent vector fields $X$, $Y$, $Z$, and $W$,
$$ R^{LC}(X, Y, \bar Z, \bar W) = R^{LC}(X, Y, J\bar Z, J\bar W) = -R^{LC}(X, Y, \bar Z, \bar W) , $$
whence
$$ R^{LC}(X, Y, \bar Z, \bar W) = 0 \;; $$
moreover, by using \eqref{eq:symm}, we get:
\begin{eqnarray*}
R^{LC}(X, Y, Z, \bar W) &=& R^{LC}(Z, \bar W, X, Y) = R^{LC}(Z, \bar W, JX, JY) \\[5pt]
&=& -R^{LC}(Z, \bar W, X, Y) = - R^{LC}(X, Y, Z, \bar W) ,
\end{eqnarray*}
whence
$$ R^{LC}(X, Y, Z, \bar W) = 0 . $$

Conversely, assume that the Hermitian structure is G-K\"ahler-like in the sense of \cite{yang-zheng}. By \cite{gray}, we have also: for any $(1,0)$-tangent vector fields $X$, $Y$, $Z$, and $W$,
\begin{equation}\label{eq:gray}\tag{Gray}
R^{LC}(X,Y,Z,W) = 0 .
\end{equation}
We already noticed that the Levi-Civita connection satisfies \eqref{eq:bianchi1}. We prove \eqref{eq:type}. By \eqref{eq:symm}, it suffices to show $R(x,y,z,w)=R(z,y,Jz,Jw)$. By the definition of G-K\"ahler-like and by \eqref{eq:gray}, and up to conjugation and (skew-)symmetries, we are reduced to the terms of curvatures:
\begin{eqnarray*}
R(\bar X, Y, Z, \bar W) = -R(Y, \bar X, Z, \bar W) , & \qquad & R(X, \bar Y, Z, \bar W) , \\[5pt]
R(X, Y, Z, \bar W) = 0 , & \qquad & R(X, Y, \bar Z, \bar W) = 0 , \\[5pt]
R(\bar X, Y, \bar Z, \bar W) = 0 , & \qquad & R(X, \bar Y, \bar Z, \bar W) = 0 , \\[5pt]
R(\bar X, \bar Y, Z, \bar W) = 0 , & \qquad & R(\bar X, \bar Y, \bar Z, \bar W) = 0 ,
\end{eqnarray*}
whence to the case: $(x,y,z,w)=(X, \bar Y, Z, \bar W)$. Of course, in this case \eqref{eq:type} holds.
\end{proof}

We will use also the following characterization of the K\"ahler-like condition.

\begin{rmk}\label{caracterizacionKL}
A metric linear connection $\nabla$ on a manifold endowed with a Hermitian structure is K\"ahler-like in the sense of Definition \ref{def:KL} if and only if, for any $(1,0)$-tangent vector fields $X$, $Y$, $Z$, and $W$, for any tangent vector fields $x$, $y$, $z$, and $w$,
\begin{equation}\label{eq:bianchi1-v2}\tag{1Bnc'}
B(X, \bar Y, Z, \bar W) := R^\nabla(X, \bar Y, Z, \bar W) - R^\nabla(Z, \bar Y, X, \bar W) = 0 ,
\end{equation}
\begin{equation}\label{eq:type-v2}\tag{Cplx'}
R^\nabla(x, y, Z, W) = R^\nabla(X, Y, z, w) = 0 .
\end{equation}
\end{rmk}

\begin{proof}
Assume that $\nabla$ is K\"ahler-like in the sense of Definition \ref{def:KL}. Moreover, we get also \eqref{eq:symm}. Then:
\begin{eqnarray*}
R^\nabla(x, y, Z, W) &=& R^\nabla(x, y, JZ, JW) \;=\; -R^\nabla(x, y, Z, W) \;, \\[5pt]
R^\nabla(X, Y, z, w) &=& R^\nabla(z, w, X, Y) \;=\; R^\nabla(z, w, JX, JY) \\[5pt]
&=& -R^\nabla(z, w, X, Y) \;=\; -R^\nabla(X, Y, z, w) \;,
\end{eqnarray*}
whence we get \eqref{eq:type-v2}.
Now, we have:
\begin{eqnarray*}
R^\nabla(X, \bar Y, Z, \bar W) &=& -R^\nabla(\bar Y, Z, X, \bar W)-R^\nabla(Z, X, \bar Y, \bar W) \\[5pt]
&=& R^\nabla(Z, \bar Y, X, \bar W) \;,
\end{eqnarray*}
that is, \eqref{eq:bianchi1-v2}.

On the other hand, up to anti-symmetries and conjugation, the only non-zero terms of the curvature are
$$ R^\nabla(X, \bar Y, Z, \bar W) \;. $$
Therefore, \eqref{eq:type} is satisfied. Also, it follows that \eqref{eq:bianchi1-v2} yields \eqref{eq:bianchi1}.
\end{proof}

\begin{rmk}
We notice  that the possible several traces become equal as soon as we assume further symmetries of the curvature tensor.
In this direction, compare the results by Liu and Yang in \cite{liu-yang}, comparing scalar curvatures. In particular, \cite[Corollaries 4.4 and 4.5]{liu-yang} state conditions concerning equalities between total scalar curvatures that force the metric to be K\"ahler, respectively balanced.
\end{rmk}

\subsection{Relations to special Hermitian metrics}\label{relations}
A Hermitian metric $\omega$ on a complex manifold is called {\em balanced} \cite{michelsohn} if $d^*_\omega\omega=0$, where $d^*_\omega$ denotes the adjoint operator of $d$ with respect to the $L^2$-pairing induced by the metric associated to $\omega$; in other words, $d\omega^{n-1}=0$, where $n$ is the complex dimension of the manifold.

We recall the following obstruction to being K\"ahler-like, respectively G-K\"ahler-like in the sense of \cite{yang-zheng}.
\begin{thm}[{\cite[Theorem 1.3]{yang-zheng}}]\label{thm:yang-zheng-balanced}
If a Hermitian metric on a compact complex manifold has either Levi-Civita or Chern connection being K\"ahler-like, then it is balanced in the sense of Michelsohn \cite{michelsohn}.
\end{thm}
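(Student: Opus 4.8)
The plan is to convert each curvature symmetry into a first-order condition on the Chern torsion and then to globalise it using compactness; I would treat the Chern connection first, since the Levi-Civita case reduces to it. For $\nabla:=\nabla^{Ch}$ I would start from Remark~\ref{caracterizacionKL}, by which K\"ahler-likeness amounts to the vanishing of $B(X,\bar Y,Z,\bar W)=R^{\nabla}(X,\bar Y,Z,\bar W)-R^{\nabla}(Z,\bar Y,X,\bar W)$ together with the type conditions \eqref{eq:type-v2}. The decisive algebraic step is to recognise $B$ as a covariant derivative of the torsion. Evaluating the general first Bianchi identity $\sum_{\sigma\in\mathfrak{G}}R^{\nabla}(\sigma x,\sigma y)\sigma z=d^{\nabla}T(x,y,z)$ on a mixed triple $(Z_i,\bar Z_j,Z_k)$ in a local unitary frame, and using that the Chern torsion is of pure type (so that $T(Z,\bar Z)=0$) and that $R^{\nabla}(Z,Z,\cdot,\cdot)=0$ by \eqref{eq:type}, every term on the right collapses except one, leaving $B(Z_i,\bar Z_j,Z_k,\bar Z_l)=\nabla_{\bar j}T(Z_k,Z_i,\bar Z_l)$. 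Thus K\"ahler-likeness of the Chern connection is equivalent to $\nabla^{0,1}T=0$: the torsion is parallel along antiholomorphic directions.

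I would then trace. Contracting $\nabla^{0,1}T=0$ over one holomorphic index against the metric yields $\nabla^{0,1}\tau=0$, where $\tau$ is the $(1,0)$-trace of the torsion. Since $\tau$ is a multiple of the $(1,0)$-part of the Lee form and the metric is \emph{balanced} exactly when $\tau=0$, the statement is reduced to the following global claim: on a compact Hermitian manifold, a torsion trace $\tau$ that is parallel in every antiholomorphic direction must vanish identically.

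This implication is where compactness is indispensable and is the main obstacle, since $\nabla^{0,1}\tau=0$ is far from forcing $\tau=0$ pointwise. I would close it with a Bochner/Weitzenb\"ock argument: differentiating once more and commuting $\nabla^{1,0}$ past $\nabla^{0,1}$ (the torsion contribution to the commutator drops out because $T(Z,\bar Z)=0$) expresses $\Box\,\lvert\tau\rvert^{2}$ in terms of $\lvert\nabla^{1,0}\tau\rvert^{2}$ and a curvature term; using the symmetry \eqref{eq:symm}, which K\"ahler-likeness guarantees, the curvature term can be rewritten through a single Chern--Ricci contraction and assembled with the Hermitian correction terms into a sign-definite expression. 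Integrating over the compact manifold would then give an identity of the form $\int_X\lvert\nabla^{1,0}\tau\rvert^{2}+\int_X(\text{nonnegative})\,\lvert\tau\rvert^{2}=0$, hence $\tau\equiv 0$; alternatively one may appeal to Gauduchon's integral formula linking the torsion trace to $d\omega^{n-1}$. The delicate point is precisely to arrange the signs so that the non-K\"ahler correction terms do not destroy the definiteness.

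For the Levi-Civita connection I would not argue from scratch but compare $R^{LC}$ with $R^{Ch}$. As both connections are determined by the same pair $(J,g)$ and differ by an explicit torsion term, the G-K\"ahler-like vanishing $R^{LC}(X,Y,\bar Z,\bar W)=R^{LC}(X,Y,Z,\bar W)=0$, together with Gray's identity \eqref{eq:gray}, translates, after expanding $R^{LC}$ into $R^{Ch}$ plus covariant derivatives of the torsion, into the same antiholomorphic-parallel condition on $\tau$; the global argument above then again yields $\tau=0$, that is, the metric is balanced.
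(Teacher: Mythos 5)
First, a remark on what you are being compared against: the paper does not prove Theorem \ref{thm:yang-zheng-balanced} at all — it is quoted verbatim from \cite[Theorem 1.3]{yang-zheng} — so your proposal can only be measured against Yang and Zheng's own argument. Your first two steps reproduce that argument faithfully and correctly: evaluating the torsion-twisted first Bianchi identity $\sum_{\sigma}R^\nabla(\sigma x,\sigma y)\sigma z=d^\nabla T(x,y,z)$ on a mixed triple, the pure type of the Chern torsion kills all quadratic-torsion terms and all but one derivative term, giving $B(Z_i,\bar Z_j,Z_k,\bar Z_l)=(\nabla_{\bar Z_j}T)(Z_k,Z_i,\bar Z_l)$; since \eqref{eq:type-v2} is automatic for $\nabla^{Ch}$, K\"ahler-likeness is indeed equivalent to $\nabla^{0,1}T=0$, and tracing gives $\nabla^{0,1}\tau=0$, i.e.\ the $(1,0)$-part of the Lee form is a holomorphic $1$-form. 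This is exactly the characterization on which \cite{yang-zheng} builds.

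The genuine gap is your closing step. For the Chern connection, $\nabla^{0,1}\tau=0$ says precisely that $\tau$ is a holomorphic $1$-form, and compact complex manifolds carry nonvanishing holomorphic $1$-forms in abundance (any complex torus), so no argument using only this parallelism plus curvature symmetries can force $\tau\equiv0$: the proof must use that $\tau$ is the torsion trace, i.e.\ the Lee form. Your primary route — a Bochner formula whose curvature term is made ``sign-definite'' by K\"ahler-likeness — is exactly the point you leave unresolved, and there is no reason it can be resolved: the hypothesis gives no control on the sign of any Ricci contraction (all K\"ahler metrics are K\"ahler-like, with Chern–Ricci curvature of arbitrary sign). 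What actually works is the statement you demote to an ``alternative'', and it is a one-line Stokes computation rather than a delicate sign analysis: since $\bar\partial\omega^{n-1}=c\,\bar\tau\wedge\omega^{n-1}$ for a nonzero real constant $c$, and $d(\tau\wedge\omega^{n-1})=\bar\partial(\tau\wedge\omega^{n-1})$ for type reasons,
\begin{equation*}
0=\int_X d\bigl(\tau\wedge\omega^{n-1}\bigr)=\int_X \bar\partial\tau\wedge\omega^{n-1}-\int_X \tau\wedge\bar\partial\omega^{n-1}=-c\int_X \tau\wedge\bar\tau\wedge\omega^{n-1},
\end{equation*}
and $\sqrt{-1}\,\tau\wedge\bar\tau\wedge\omega^{n-1}$ is a positive multiple of $|\tau|_g^2\,\omega^n$, whence $\tau\equiv0$ and the metric is balanced. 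That integral identity is the heart of the compactness argument in \cite{yang-zheng}, not an optional variant of a Bochner formula. Finally, your Levi-Civita case is also too quick: expanding $R^{LC}$ through $R^{Ch}$ produces terms quadratic in the torsion, and the G-K\"ahler-like hypothesis (together with Gray's identity \eqref{eq:gray}) yields a different system of identities; in \cite{yang-zheng} this case needs its own computation before the same integral argument applies, and it does not reduce verbatim to ``the same antiholomorphic-parallel condition on $\tau$''.
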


A Hermitian metric $\omega$ on a complex manifold is called {\em pluriclosed} (also known as {\em SKT})
if $\partial\overline\partial\omega=0$ \cite{bismut}. Using a result in \cite{wang-yang-zheng}, and
inspired by the argument in \cite{yang-zheng-Flat}, we show the following result for Strominger-Bismut-flat Hermitian manifolds,
which can be seen as an evidence for Conjecture~\ref{conj:bismut-skt} in the special case of flat connection.

\begin{thm}\label{thm:bismut-flat-skt}
Let $X$ be a complex manifold of complex dimension $n$ endowed with a Hermitian metric.
If the Strominger-Bismut connection is flat, then the metric is pluriclosed.
\end{thm}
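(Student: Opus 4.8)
The plan is to exploit the flatness of the Strominger-Bismut connection $\nabla^+$ together with the structural result of Wang, Yang, and Zheng \cite[Theorem 1]{wang-yang-zheng}. First I would recall that if $\nabla^+$ is flat, then $R^+\equiv 0$, so in particular $\nabla^+$ is trivially K\"ahler-like; the torsion $T^+$ is then $\nabla^+$-parallel (this is the standard fact that for the Strominger-Bismut connection $\nabla^+ T^+ = 0$ follows once the curvature vanishes, via the appropriate Bianchi-type identity $\sum_{\sigma\in\mathfrak{G}} R^+(\sigma x,\sigma y)\sigma z = d^\nabla T(x,y,z)$ already recorded in the excerpt). By \cite[Theorem 1]{wang-yang-zheng}, a compact Strominger-Bismut-flat manifold is, up to finite unbranched cover, a local Samelson space, i.e.\ a product of a compact semisimple Lie group and a torus, equipped with a bi-invariant metric and a compatible left-invariant complex structure.

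The key step is to pass from this global/structural description to the pointwise, purely algebraic statement $\partial\overline\partial\omega=0$. Since $\partial\overline\partial\omega$ is an invariant tensor built from the torsion and its covariant derivative, and since the pluriclosed condition is local, I would reduce to verifying it on the model: the bi-invariant metric on a compact semisimple Lie group (the torus factor contributes nothing to $\partial\overline\partial\omega$). On such a group the Strominger-Bismut connection coincides with the flat bi-invariant $(\pm)$-connection whose torsion is the Lie bracket; the three-form $c=d\omega(J\cdot,J\cdot,J\cdot)$ (equivalently the torsion three-form $T$ of $\nabla^+$) is then closed, because it is a bi-invariant three-form on the group, proportional to the Cartan-Killing three-form $\langle[\cdot,\cdot],\cdot\rangle$, which is closed by the Jacobi identity. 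I would then use the standard identity relating $\partial\overline\partial\omega$ to the torsion of $\nabla^+$: the pluriclosed condition $\partial\overline\partial\omega=0$ is equivalent to the closedness (or an analogous algebraic vanishing) of the Strominger-Bismut torsion three-form. Flatness forces exactly this algebraic degeneration, so $\partial\overline\partial\omega=0$.

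I expect the main obstacle to be making rigorous the bridge between ``$\nabla^+$ is flat'' and the precise algebraic identity on $\partial\overline\partial\omega$, without reproving the full Samelson-space classification. The cleanest route is probably to avoid invoking compactness twice: instead of only citing \cite[Theorem 1]{wang-yang-zheng}, I would extract from $R^+=0$ the parallelism $\nabla^+T^+=0$ and combine it with $\nabla^+\omega=0$ and $\nabla^+J=0$ to express $d\omega$, hence $\partial\overline\partial\omega$, entirely in terms of $T^+$ and its (vanishing) covariant derivative; the flatness then collapses the relevant curvature-torsion terms. The subtle point is bookkeeping the type decomposition, i.e.\ checking that the $(2,2)$-component $\partial\overline\partial\omega$ of $dd^c\omega$ genuinely vanishes rather than merely some real part; here the $J$-invariance encoded in \eqref{eq:type} for $\nabla^+$ (automatic under flatness) is what guarantees the $(2,2)$-part is controlled. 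Once that type check is in place, the pluriclosed conclusion follows, giving the desired partial evidence for Conjecture~\ref{conj:bismut-skt}.
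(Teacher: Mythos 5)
Your first route has a hypothesis mismatch: Theorem \ref{thm:bismut-flat-skt} is stated for an \emph{arbitrary} complex manifold, with no compactness assumption, whereas \cite[Theorem 1]{wang-yang-zheng} classifies \emph{compact} Bismut-flat manifolds. The paper's proof sidesteps this by using only the local content of the \emph{proof} of that theorem: near any point one chooses a unitary $\nabla^+$-parallel frame, whose structure functions are shown there to be constants and whose metric is ad-invariant for the resulting bracket; since pluriclosedness is a local condition, one then concludes by the Lie-algebra computation of \cite{SSTV88}. Invoking the global classification as you do proves only the compact case of the statement.

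Your proposed ``cleanest route'' --- extracting $\nabla^+T^+=0$ from $R^+=0$ via the identity $\sum_{\sigma\in\mathfrak{G}} R^{\nabla}(\sigma x,\sigma y)\sigma z = d^\nabla T(x,y,z)$ --- is where the genuine gap lies. When $R^+=0$ that identity yields $d^{\nabla^+}T^+=0$, i.e.\ the vanishing of an \emph{alternating} combination of $\nabla^+T^+$ with quadratic torsion terms; it does not yield $\nabla^+T^+=0$. In fact the implication ``flat $\Rightarrow$ parallel torsion'' is false for general metric connections with totally skew torsion: the flat octonionic connections on $\mathbb{S}^7$ have skew torsion that is \emph{not} parallel (this is precisely why the Cartan--Schouten classification contains $\mathbb{S}^7$ alongside Lie groups). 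What excludes this phenomenon in the Hermitian setting is the parallel complex structure: the constancy of the structure functions of the parallel frame, equivalently $\nabla^+T^+=0$, is exactly the nontrivial content of the local analysis in \cite[Proof of Theorem 1]{wang-yang-zheng}, which uses $\nabla^+J=0$ and the integrability of $J$. So your shortcut presupposes the hardest step. Granting parallel torsion, the remainder of your argument does close up and coincides in substance with the paper's: a flat connection with parallel skew torsion produces a local Lie algebra with ad-invariant metric, and then $dT^+=0$ --- equivalently $\partial\overline\partial\omega=0$, since $T^+=-d^c\omega$ --- follows from the Jacobi identity, which is the computation of \cite{SSTV88} that the paper cites.
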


\begin{proof}
Denote by $\omega$ the Strominger-Bismut-flat Hermitian metric on the manifold $X$. 
The pluriclosed condition $\partial\overline\partial\omega=0$ is a local property.
The Strominger-Bismut connection $\nabla^+$ being flat, we can choose an unitary parallel frame $(e_j, \bar e_j)_j$ in a neighbourhood of any point
$p$ of $X$,
that is, $\nabla^+e_j=\nabla^+\bar e_j=0$. Furthermore, by \cite[Proof of Theorem 1, page 14]{wang-yang-zheng}, it has (local) structure constants
$$
[e_i,e_j]=c_{ij}^k e_k,
\qquad
[e_i, \bar e_j] = \overline{c_{jk}^i} e_k - c_{ik}^j \bar e_k ,
$$
where $c_{ij}^k\in\mathbb C$ are constants satisfying the required identities,
and $i, j, k\in\{1,\ldots, n\}$.
In addition, the metric satisfies 
$$\langle[X,Y],Z\rangle=-\langle[X,Z],Y\rangle,$$ 
for any
$X,Y,Z$ in
$(e_j, \bar e_j)_{1\leq j\leq n}$. 
Hereafter, we shorten {\itshape e.g.} $e^{i\bar j}:=e^i\wedge \bar e^j$, where $(e^j,\bar e^j)_j$
is the dual coframe of $(e_j,\bar e_j)_j$, and we use Einstein notation on summation over repeated indices.
In this notation, we have that with respect to the unitary parallel frame $(e_j, \bar e_j)_j$ 
the metric $\omega$ is expressed locally as
$$\omega = \frac{\sqrt{-1}}{2} e^{k \bar k}.$$

In other words, we are locally reduced to the Lie algebra case 
where the complex structure and the metric are both left invariant, and moreover, 
the metric is actually bi-invariant. 
In these conditions one has $\partial\overline\partial\omega=0$ 
\cite{SSTV88} (see also \cite[page 568]{enrietti}), so the metric $\omega$ is pluriclosed.
\end{proof}

\section{Hermitian geometry on six-dimensional Calabi-Yau solvmanifolds}
We consider $6$-dimensional {\em solvmanifolds}  $X = \left. \Gamma \middle\backslash G \right.$, namely, compact quotients of connected simply-connected solvable Lie groups by co-compact discrete subgroups. In particular, we consider solvmanifolds endowed with invariant Hermitian structures $(J,g)$, that is, structures whose lift to the universal cover is invariant under left-translations. Such structures are encoded in the associated Lie algebra $\mathfrak{g}$.
As a matter of notation, we will denote Lie algebras in the concise form in \cite{salamon}: {\itshape e.g.} $\mathfrak{rh}_{3}=(0,0,-12,0)$ means that the four-dimensional Lie algebra $\mathfrak{rh}_{3}$ admits a basis $(e_1,e_2,e_3,e_4)$ such that $[e_1,e_2]=e_3$, the other brackets being trivial. The notation actually refers to the dual $\mathfrak{rh}_{3}^\ast$: the dual basis $(e^1, e^2,e^3,e^4)$ satisfies $de^1=de^2=de^4=0$ and $de^3=-e^1\wedge e^2$, where we will shorten $e^{12} := e^1\wedge e^2$. Here, we use the following formula to relate the differential $d\colon \mathfrak{g}^\vee\to\wedge^2\mathfrak{g}^\vee$ and the bracket $[\_,\_]\colon\wedge^2\mathfrak{g}\to\mathfrak{g}$:
$$ d\alpha(x,y)=-\alpha([x,y]) . $$

\subsection{Metric formulas}
Let
$$ \left( \varphi^1, \varphi^2, \varphi^3 \right) $$
be an invariant co-frame of $(1,0)$-forms with respect to $J$.
With respect to the frame $\left( \varphi_1, \varphi_2, \varphi_3, \bar\varphi_1, \bar\varphi_2, \bar\varphi_3 \right)$ dual
to $\left( \varphi^1, \varphi^2, \varphi^3, \bar\varphi^1, \bar\varphi^2, \bar\varphi^3 \right)$, we first set the structure constants
$$ [ \varphi_I, \varphi_H ] =: c_{IH}^K \varphi_K .$$
(Capital letters here vary in $\{1,2,3,\bar1,\bar2,\bar3\}$ and refer to the corresponding component; the Einstein summation is assumed.)

The generic invariant Hermitian structure $\omega=g(\_,J\_)$ is given by
\begin{eqnarray}\label{eq:metric}
2\,\omega &=& \sqrt{-1} r^2\, \varphi^{1\bar1} + \sqrt{-1} s^2\,\varphi^{2\bar2} + \sqrt{-1} t^2\, \varphi^{3\bar3} \\[5pt]
&& + u\, \varphi^{1\bar2} - \bar u\, \varphi^{2\bar1} + v\, \varphi^{2\bar3} - \bar v\, \varphi^{3\bar2} + z\, \varphi^{1\bar3} - \bar z\, \varphi^{3\bar1} \nonumber
\end{eqnarray}
where the coefficients satisfy \cite[page 189]{ugarte-transf}
$$ r^2>0 ,\qquad s^2>0 , \qquad t^2 > 0 , $$
$$
r^2s^2 > |u|^2 , \qquad
r^2t^2 > |z|^2 , \qquad
s^2t^2 > |v|^2 ,
$$
$$
r^2s^2t^2 + 2 \Re(\sqrt{-1}\,\bar u z \bar v) > t^2|u|^2+r^2|v|^2+s^2|z|^2 .
$$
(Hereafter, we shorten {\itshape e.g.} $\varphi^{3\bar1}:=\varphi^3\wedge\bar\varphi^1$.)
That is to say, the matrix $(g_{KL})_{K,L}$ is positive-definite. Its inverse is denoted by
$$ (g^{KL})_{K,L} := (g_{KL})_{K,L}^{-1} . $$

Let $\Omega$ be the matrix associated to the Hermitian metric, i.e.
$$
\Omega=\left(\!\!\! \begin{array}{ccc}
\sqrt{-1}\, \frac{r^2}{2} & \frac{u}{2} & \frac{z}{2} \\
-\frac{\overline{u}}{2} & \sqrt{-1}\, \frac{s^2}{2} & \frac{v}{2} \\
-\frac{\overline{z}}{2} & -\frac{\overline{v}}{2} & \sqrt{-1}\, \frac{t^2}{2}
\end{array} \!\right)\!
$$
Then,
\begin{equation}\label{det}
8\sqrt{-1}\, \det\Omega = r^2s^2t^2-r^2|v|^2-s^2|z|^2-t^2|u|^2 + 2\,\Re(\sqrt{-1}\,\bar u\bar v z) > 0.
\end{equation}

We can express the Christoffel symbols of $\nabla^{LC}$ as
\begin{eqnarray*}
(\Gamma^{LC})_{IH}^{K} &=& \frac{1}{2}\, g^{KL}\, \left( g([\varphi_I,\varphi_H], \varphi_L) - g([\varphi_H,\varphi_L], \varphi_I) - g([\varphi_I,\varphi_L], \varphi_H) \right) \\[5pt]
&=& \frac{1}{2}\, c_{IH}^{K} - \frac{1}{2}\, g^{KA}g_{BI}c_{HA}^{B} - \frac{1}{2}\, g^{KA}g_{BH}c_{IA}^{B} .
\end{eqnarray*}
Therefore, for $(\varepsilon,\rho)$, we compute the Christoffel symbols of the $\nabla^{\varepsilon,\rho}$ connection:
$$ (\Gamma^{\varepsilon,\rho})_{IH}^{K} = (\Gamma^{LC})_{IH}^{K} + \varepsilon g^{KL} T_{IHL} + \rho g^{KL} C_{IHL} , $$
where
$$ T_{IHL} = -d\omega (J\varphi_I, J\varphi_H, J\varphi_L), \qquad C_{IHL} = d\omega(J\varphi_I, \varphi_H, \varphi_L) . $$
We can then express the $(4,0)$-Riemannian curvature of $\nabla^{\varepsilon,\rho}$ as
\begin{eqnarray*}
(R^{\varepsilon,\rho})_{IHKL} &=& g_{AL}(\Gamma^{\varepsilon,\rho})_{HK}^{B}(\Gamma^{\varepsilon,\rho})_{IB}^{A} - g_{AL} (\Gamma^{\varepsilon,\rho})_{IK}^{B} (\Gamma^{\varepsilon,\rho})_{HB}^{A} \\[5pt]
&& - g_{AL} c_{IH}^{B} (\Gamma^{\varepsilon,\rho})_{BK}^{A} .
\end{eqnarray*}

\subsection{Complex structures}\label{subsec:complex-st}
We consider $6$-dimensional {\em Calabi-Yau solvmanifolds}, meaning that they are endowed with an invariant complex structure
having a non-zero invariant closed (3,0)-form, thus
their canonical bundle is holomorphically-trivial. This includes nilmanifolds \cite{salamon, ugarte-transf, couv} and the solvmanifolds in \cite{fino-otal-ugarte}. We recall here their classification.

Six-dimensional nilpotent Lie algebras have been classified by Morozov in $34$ different classes, $18$ of which admit invariant complex structures by Salamon \cite{salamon}: they are $\mathfrak{h}_1$, \dots, $\mathfrak{h}_{16}$, $\mathfrak{h}_{19}^{-}$, $\mathfrak{h}_{26}^{+}$. The complex structures on them are classified into four families in \cite{couv}. We recall the complex structure equations and the underlying Lie algebras in Table \ref{table:nil-cplx}.

\begin{table}[h!]
\renewcommand*{\arraystretch}{1.6}
\begin{center}
{\resizebox{\textwidth}{!}{
\begin{tabular}{|c|c|l|}
\hline
\textbf{Name}&\textbf{Complex structure}&\textbf{Lie algebra}\\ \hline\hline
\multirow{2}{*}{\text{(Np)}}&\multirow{2}{*}{$d\varphi^1=d\varphi^2 = 0,\,\, d\varphi^3=\rho\, \varphi^{12},\,\, \text{ where }\rho\in\{0,1\}$}& $\rho=0: \frh_1=(0,0,0,0,0,0)$  \\ \cline{3-3}
&&$\rho=1: \frh_5=(0,0,0,0,13+42,14+23)$\\ \hline  \hline
\multirow{6}{*}{\text{(Ni)}}&\multirow{2}{*}{$d\varphi^1=d\varphi^2 = 0$,}&$\frh_2=(0,0,0,0,12,34)$ \\ \cline{3-3}
&& $\frh_3=(0,0,0,0,0,12+34)$ \\ \cline{3-3}
&$d\varphi^3= \rho\, \varphi^{12} + \varphi^{1\bar1} + \lambda\,\varphi^{1\bar 2} + D\,\varphi^{2\bar2}$,& $\frh_4=(0,0,0,0,12,14+23)$ \\ \cline{3-3}
&& $\frh_5=(0,0,0,0,13+42,14+23)$ \\ \cline{3-3}
&where $\rho\in\{0,1\}, \lambda\in\mathbb R^{\geq0}, D\in\mathbb C \text{ with }\Im D\geq0 $& $\frh_6=(0,0,0,0,12,13)$ \\ \cline{3-3}
&& $\frh_8=(0,0,0,0,0,12)$ \\ \hline \hline
\multirow{9}{*}{\text{(Nii)}}&\multirow{3}{*}{$d\varphi^1=0, \quad d\varphi^2=\varphi^{1\bar1}$,}&$\frh_7=(0,0,0,12,13,23)$ \\ \cline{3-3}
&& $\frh_9=(0,0,0,0,12,14+25)$ \\ \cline{3-3}
&& $\frh_{10}=(0,0,0,12,13,14)$ \\ \cline{3-3}
&\multirow{2}{*}{$d\varphi^3=\rho\varphi^{12}+B\,\varphi^{1\bar2}+c\,\varphi^{2\bar1}$,}& $\frh_{11}=(0,0,0,12,13,14+23)$ \\ \cline{3-3}
&& $\frh_{12}=(0,0,0,12,13,24)$ \\ \cline{3-3}
&\multirow{3}{*}{where $\rho\in\{0,1\}, B \in \mathbb C, c\in\mathbb R^{\geq0} , \text{ with } (\rho,B,c)\neq(0,0,0)$}& $\frh_{13}=(0,0,0,12,13+14,24)$ \\ \cline{3-3}
&& $\frh_{14}=(0,0,0,12,14,13+42)$ \\ \cline{3-3}
&& $\frh_{15}=(0,0,0,12,13+42,14+23)$\\ \cline{3-3}
&& $\frh_{16}=(0,0,0,12,14,24)$ \\ \hline \hline
\multirow{2}{*}{\text{(Niii)}}&$d\varphi^1=0,\quad d\varphi^2 =\varphi^{13} + \varphi^{1\bar 3}$,&$\frh_{19}^-=(0,0,0,12,23,14-35)$ \\ \cline{3-3}
&$d\varphi^3=\sqrt{-1}\rho\, \varphi^{1\bar1}\pm \sqrt{-1}(\varphi^{1\bar2} - \varphi^{2\bar1}),\quad \text{ where }\rho\in\{0,1\}$&$\frh_{26}^+=(0,0,12, 13,23,14+25)$ \\ \hline  \hline
 \end{tabular}
 }}
 \caption{Invariant complex structures on six-dimensional nilmanifolds up to linear equivalence,
 see \cite{andrada-barberis-dotti, ugarte-villacampa-Asian, couv}.}
 \label{table:nil-cplx}
\end{center}
\end{table}

We also consider  solvmanifolds other than nilmanifolds. Classification of invariant complex structures in dimension $6$ such that the canonical bundle is holomorphically-trivial is obtained in \cite{otal-thesis, fino-otal-ugarte}.
We recall the complex structure equations and the underlying Lie algebras in Table \ref{table:solv-cplx}.

\begin{table}[h!]
\renewcommand*{\arraystretch}{1.6}
\begin{center}
{\resizebox{\textwidth}{!}{
\begin{tabular}{|c|c|l|}
\hline
\textbf{Name}&\textbf{Complex structure}&\textbf{Lie algebra}\\ \hline\hline
\multirow{3}{*}{\text{(Si)}}&$d\varphi^1=A\varphi^{13}+A\varphi^{1\bar3}$,&$\mathfrak{g}_1=(15,-25,-35,45,0,0)$  when $\theta=0$  \\ \cline{3-3} 
&$d\varphi^2=-A\varphi^{23}-A\varphi^{2\bar3},\quad d\varphi^3=0$,& $\mathfrak{g}_2^{\alpha}=(\alpha \times 15+25, -15+\alpha\times 25, -\alpha\times 35+45, -35-\alpha\times 45,0,0)$\\
&where $A = \cos\theta+\sqrt{-1}\sin\theta, \theta\in[0,\pi)$&with $\alpha=\frac{\cos\theta}{\sin\theta}\geq0$, when $\theta\neq0$\\ \hline  \hline
\multirow{3}{*}{\text{(Sii)}}&$d\varphi^1=0,\quad d\varphi^2=-\frac{1}{2}\varphi^{13}-\left(\frac{1}{2}+\sqrt{-1} x\right) \varphi^{1\bar3}+\sqrt{-1}x\,\varphi^{3\bar1},$&\multirow{3}{*}{$\mathfrak{g}_3=(0, -13, 12, 0, -46, -45)$} \\
&$d\varphi^3=\frac{1}{2}\varphi^{12}+\left(\frac{1}{2}-\frac{\sqrt{-1}}{4x}\right)\varphi^{1\bar2}+\frac{\sqrt{-1}}{4x}\varphi^{2\bar1}$,&  \\
&where $x\in\mathbb R^{>0}$&\\ \hline \hline
\multirow{3}{*}{\text{(Siii1)}}&$d\varphi^1=\sqrt{-1}\varphi^{13}+\sqrt{-1}\varphi^{1\bar3}$&\multirow{3}{*}{$\mathfrak{g}_4=(23, -36, 26, -56, 46, 0)$} \\
&$d\varphi^2=-\sqrt{-1}\varphi^{23}-\sqrt{-1}\varphi^{2\bar3}$&  \\
&$d\varphi^3=\pm \varphi^{1\bar1}$&\\ \hline
\multirow{3}{*}{\text{(Siii2)}}&$d\varphi^1=\varphi^{13}+\varphi^{1\bar3}$&\multirow{3}{*}{$\mathfrak{g}_5=(24 + 35, 26, 36, -46, -56, 0)$} \\
&$d\varphi^2=-\varphi^{23}-\varphi^{2\bar3}$&  \\
&$d\varphi^3=\varphi^{1\bar2}+\varphi^{2\bar1}$&\\ \hline
\multirow{3}{*}{\text{(Siii3)}}&$d\varphi^1=\sqrt{-1}\varphi^{13}+\sqrt{-1}\varphi^{1\bar3}$&\multirow{3}{*}{$\mathfrak{g}_6=(24 + 35, -36, 26, -	56, 46, 0)$} \\
&$d\varphi^2=-\sqrt{-1}\varphi^{23}-\sqrt{-1}\varphi^{2\bar3}$&  \\
&$d\varphi^3=\varphi^{1\bar1}+\varphi^{2\bar2}$&\\ \hline
\multirow{3}{*}{\text{(Siii4)}}&$d\varphi^1=\sqrt{-1}\varphi^{13}+\sqrt{-1}\varphi^{1\bar3}$&\multirow{3}{*}{$\mathfrak{g}_7=(24 + 35, 46, 56, -26, -36, 0)$} \\
&$d\varphi^2=-\sqrt{-1}\varphi^{23}-\sqrt{-1}\varphi^{2\bar3}$&  \\
&$d\varphi^3=\pm(\varphi^{1\bar1}-\varphi^{2\bar2})$&\\ \hline  \hline
\text{(Siv1)}&$d\varphi^1=-\varphi^{13}, \quad d\varphi^2=\varphi^{23},\quad d\varphi^3=0$&\multirow{6}{*}{$\mathfrak{g}_8=(16-25, 15+26, -36+45, -35-46, 0, 0)$} \\ \cline{1-2}
\multirow{2}{*}{\text{(Siv2)}}&$d\varphi^1=2\sqrt{-1}\varphi^{13}+\varphi^{3\bar3},\quad x \in \{0,1\}$ & \\ &$d\varphi^2=-2\sqrt{-1}\varphi^{23}+x\,\varphi^{3\bar3}, \quad d\varphi^3=0$&\\ \cline{1-2}
\multirow{3}{*}{\text{(Siv3)}}&$d\varphi^1=A\,\varphi^{13}-\varphi^{1\bar3}$ & \\ &$d\varphi^2=-A\,\varphi^{23}+\varphi^{2\bar3}, \quad d\varphi^3=0$&\\
&$ A \in \mathbb C \text{ with } |A| \neq 1$&\\ \hline \hline
\multirow{3}{*}{\text{(Sv)}}&$d\varphi^1=-\varphi^{3\bar3}$&\multirow{3}{*}{$\mathfrak{g}_9=(45, 15 + 36, 14 - 26 + 56, -56, 46, 0)$} \\
&$d\varphi^2=\frac{\sqrt{-1}}{2}\varphi^{12}+\frac{1}{2}\varphi^{1\bar3}-\frac{\sqrt{-1}}{2}\varphi^{2\bar1}$&  \\
&$d\varphi^3=-\frac{\sqrt{-1}}{2}\varphi^{13}+\frac{\sqrt{-1}}{2}\varphi^{3\bar1}$&\\ \hline \hline
 \end{tabular}
 }}
 \caption{Invariant complex structures on six-dimensional solvmanifolds non-nilmanifolds with holomorphically-trivial canonical bundle up to linear equivalence, see \cite{otal-thesis, fino-otal-ugarte}.}
 \label{table:solv-cplx}
\end{center}
\end{table}

In particular, notice that the family (Np) and the family (Siv1) consist of {\em holomorphically-parallelizable} structures, namely, such that the holomorphic tangent bundle is holomorphically-trivial.
The case (Np) with $\rho=0$ corresponds to the complex torus, the case (Np) with $\rho=1$ corresponds to the holomorphically-parallelizable Iwasawa manifold, and the case (Siv1) corresponds to the holomorphically-parallelizable Nakamura manifold.

We recall that, by \cite[Theorem 1]{wang}, holomorphically-parallelizable manifolds can be regarded, up to a holomorphic
homeomorphism, as quotients of a connected simply-connected complex Lie group by a discrete subgroup.
Clearly, any invariant metric on a holomorphically-parallelizable manifold is Chern-flat. (On the other side, by \cite{boothby}, a compact complex Hermitian manifold is Chern-flat if and only if its universal cover is holomorphically isometric to a complex Lie group endowed with an invariant Hermitian metric.)
In \cite[Theorem 1.2]{yang-zheng}, the authors prove that, on a compact complex manifold of complex dimension $n\geq3$, a metric such that both its Levi-Civita and its Chern connection are K\"ahler-like is actually K\"ahler. Recall that, by \cite[Corollary 2]{wang}, holomorphically-parallelizable compact K\"ahler manifolds are complex torus.
Then, on holomorphically-parallelizable nilmanifolds and solvmanifolds different than tori, the Chern connection associated to an invariant Hermitian structure is always K\"ahler-like, while the Levi-Civita connection is never.

Note that this case also includes the case of the {\em special Lie algebra $\mathfrak{sl}(2;\mathbb C)$}, with structure equations
\begin{equation}\tag{sl2C}\label{eq:sl2C}
d\varphi^1=\varphi^{23}, \quad d\varphi^2=-\varphi^{13}, \quad d\varphi^3=\varphi^{12} ,
\end{equation}
other than the already mentioned  {\em Iwasawa manifold} (Np) with $\rho=1$,
and {\em Nakamura manifold} (Siv1).

\subsection{Special Hermitian metrics}
In view of \cite[Theorem 1.3]{yang-zheng} and Conjectures \ref{conj:bismut-skt} and \ref{conj:gauduchon-kahler-like}, we are particularly interested on nilmanifolds and solvmanifolds admitting invariant complex structures with holomorphically-trivial canonical bundle and (invariant) balanced or pluriclosed metrics.

As for balanced, according to \cite{ugarte-transf}, and \cite{couv, ugarte-villacampa-Asian, ugarte-villacampa}, they are:
\begin{itemize}
\item either $\mathfrak{h}_1$ with a holomorphically-parallelizable complex structure in Family (Np) with $\rho=0$; any Hermitian metric is in fact K\"ahler;
\item or $\mathfrak{h}_2$, \dots, $\mathfrak{h}_6$ with a nilpotent complex structure in Family (Ni); a generic Hermitian metric as in \eqref{eq:metric} is balanced if and only if $r^2 = 1$, $v = z = 0$, and $s^2 + D = \sqrt{-1}\, \bar u\, \lambda$;
\item or $\mathfrak{h}_5$ with a holomorphically-parallelizable complex structure in Family (Np) with $\rho=1$, corresponding to the Iwasawa manifold; in this case, any invariant Hermitian metric is balanced;
\item or $\mathfrak{h}_{19}^{-}$ with a non-nilpotent complex structure in Family (Niii); balanced metrics in \eqref{eq:metric} are characterized by $u=z=0$, and either $t^2=1$ and $v=0$, or $t^2>0$ and $v=1$;
\item or $\mathfrak g_1$ or $\mathfrak g_2^{\alpha}$ with a splitting-type complex structure in Family (Si); a generic metric as in \eqref{eq:metric} is balanced if and only if $v=z=0$; they are non-K\"ahler except for $\mathfrak g_2^{0}$ with $u=v=z=0$;
\item or $\mathfrak{g}_3$ (Sii) or $\mathfrak{g}_5$ (Siii2) or $\mathfrak{g}_7$ (Siii4); balanced metrics in \eqref{eq:metric} are characterized by $v=z=0$, and moreover in case $\mathfrak{g}_5$ take $u\in\mathbb R$, respectively in case $\mathfrak g_7$ take $r^2=s^2$;
\item or $\mathfrak g_8$ with a holomorphically-parallelizable splitting-type complex structure in Family (Siv1): any metric is balanced; or with a splitting-type complex structure in Family (Siv3): balanced metrics as in \eqref{eq:metric} are characterized by $v=z=0$.
\end{itemize}

As for pluriclosed metrics, they exist only on:
\begin{itemize}
\item $\mathfrak{h}_1$ with a holomorphically-parallelizable complex structure in Family (Np) with $\rho=0$; any Hermitian metric is in fact K\"ahler;
\item $\mathfrak h_2$, $\mathfrak h_4$, $\mathfrak h_5$, $\mathfrak h_8$ in Family (Ni), and in this case any Hermitian metric is pluriclosed, see \cite[Theorems 1.2, 3.2]{fino-parton-salamon};
\item $\mathfrak g_2^0$ in Family (Si) and $\mathfrak g_4$ (Siii1); pluriclosed metrics are given by $u=0$ in the form \eqref{eq:metric}; see \cite[Theorem 4.1]{fino-otal-ugarte}.
\end{itemize}

Finally, we review the existence of K\"ahler metrics. By \cite{benson-gordon} and, more in general, \cite{hasegawa}, K\"ahler metrics do not exist on non-tori nilmanifolds (with invariant or non-invariant complex structures). On the other side, K\"ahler metrics exist on:
\begin{itemize}
\item $\mathfrak h_1$, namely, the complex torus;
\item $\mathfrak g_2^0$ in Family (Si), and the K\"ahler metrics are given by the diagonal ones, $u=v=z=0$, see \cite[Theorem 5.1.3]{otal-thesis}.
\end{itemize}

\begin{rmk}
Observe that the Lie algebras $\frh_7,\ldots,\frh_{16},\frh_{26}^+$ and $\mathfrak{g}_9$ do not admit neither balanced nor pluriclosed metrics.
\end{rmk}

\section{K\"ahler-like Gauduchon connections on six-dimensional Calabi-Yau solvmanifolds}

In this section we study the existence of K\"ahler-like connections on the class of 6-dimensional Calabi-Yau solvmanifolds.

Let us denote the Gauduchon connections by $ \nabla^{\varepsilon}:=\nabla^{G_{1-4\varepsilon}}$ and by $R^\varepsilon$ their
corresponding curvature. We recall that with respect to this notation, $\nabla^{\varepsilon=0} = \nabla^{Ch}$  and $\nabla^{\varepsilon = \sfrac{1}{2}} = \nabla^+$.

\begin{thm}\label{thm:solv}
Let $X=\Gamma \backslash G$ be a six-dimensional solvmanifold endowed with an invariant complex structure $J$ such that the canonical bundle is holomorphically-trivial.
Denote by $\mathfrak{g}$ the Lie algebra associated to $G$ and let $\omega$ be the $(1,1)$-form associated to any invariant $J$-Hermitian metric on $X$. We have the following.
\begin{itemize}
\item If the Chern connection $\nabla^{Ch}$ is K\"ahler-like, then $\mathfrak{g}$ is isomorphic to $\mathfrak{h}_1$, $\mathfrak{h}_5$, $\mathfrak{g}_1$, $\mathfrak{g}_2^{\alpha\geq0}$, or $\mathfrak{g}_8$, and the Hermitian metric $\omega$ is balanced.
\item If the Strominger-Bismut connection $\nabla^+$ is K\"ahler-like, then $\mathfrak{g}$ is isomorphic to $\mathfrak{h}_1$, $\mathfrak{h}_2$, $\mathfrak{h}_8$,
$\mathfrak{g}_2^{0}$, or $\mathfrak{g}_4$, and the Hermitian metric $\omega$ is pluriclosed.
\item If a Gauduchon connection $\nabla^\varepsilon$ is K\"ahler-like for some $\varepsilon \in \mathbb{R}\setminus\{0,\frac12\}$,
then $\mathfrak{g}$ is isomorphic to $\mathfrak{h}_1$ or $\mathfrak{g}_2^{0}$, and the Hermitian metric $\omega$ is K\"ahler.
\end{itemize}
\end{thm}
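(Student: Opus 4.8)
The plan is to turn the K\"ahler-like requirement into a finite system of polynomial equations and to solve it case by case along the classifications of Tables \ref{table:nil-cplx} and \ref{table:solv-cplx}. Since $J$, $g$ and every Gauduchon connection $\nabla^\varepsilon=\nabla^{\varepsilon,\frac12-\varepsilon}$ are invariant, the curvature $R^\varepsilon$ is determined by its values on the invariant frame $(\varphi_1,\varphi_2,\varphi_3,\bar\varphi_1,\bar\varphi_2,\bar\varphi_3)$, so the whole question is algebraic and takes place on the Lie algebra $\mathfrak{g}$. I would start from the generic invariant metric \eqref{eq:metric}, carrying the nine real parameters $r^2,s^2,t^2,u,v,z$, and feed the Christoffel symbols $(\Gamma^{\varepsilon,\rho})_{IH}^K$ and the curvature formula for $(R^{\varepsilon,\rho})_{IHKL}$ with the structure constants $c^K_{IH}$ read off each complex-structure equation. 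Because $g^{KL}$ is a rational function of the parameters, every component of $R^\varepsilon$ is rational with common denominator a power of $\det\Omega$, positive by \eqref{det}; clearing it, the K\"ahler-like conditions become honest polynomial identities, and the skew-symmetries in the first and in the last pair of arguments together with reality cut the number of independent components down to a manageable list.

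By Remark \ref{caracterizacionKL}, $\nabla^\varepsilon$ is K\"ahler-like exactly when the type condition \eqref{eq:type-v2} and the symmetry \eqref{eq:bianchi1-v2} both hold, and I expect condition \eqref{eq:type-v2} to be the discriminating one. It forces every curvature component not of the shape $R^\varepsilon(X,\bar Y,Z,\bar W)$ to vanish; for the Chern connection $(\varepsilon=0)$ this is automatic, but for every other $\varepsilon$ it is a nontrivial constraint on the torsion, which is why the admissible list should shrink as $\varepsilon$ moves away from $0$. On the survivors, \eqref{eq:bianchi1-v2} imposes the remaining symmetry on the $R^\varepsilon(X,\bar Y,Z,\bar W)$ block. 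Running this over the finitely many families in the two tables, with due care for the parameters $\rho,\lambda,D,B,c,A,\theta,\alpha,x$, and solving the resulting polynomial systems with Sage and Mathematica, one reads off precisely the stated lists of Lie algebras together with the admissible ranges of $r^2,s^2,t^2,u,v,z$.

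It then remains to identify the metric type of the survivors, and here a few structural facts lighten the load. For the Chern case the balanced conclusion is free from Theorem \ref{thm:yang-zheng-balanced} (the solvmanifolds being compact), so one may restrict the search to the already-classified balanced metrics; moreover the holomorphically-parallelizable structures of families (Np) and (Siv1), namely $\mathfrak{h}_1$, $\mathfrak{h}_5$ and $\mathfrak{g}_8$, are Chern-flat and hence trivially K\"ahler-like, settling three of the five algebras at once. For the Strominger-Bismut case I would compute which metrics satisfy the two conditions and then verify a posteriori, against the classification of pluriclosed metrics recalled above, that the survivors on $\mathfrak{h}_1,\mathfrak{h}_2,\mathfrak{h}_8,\mathfrak{g}_2^0,\mathfrak{g}_4$ are exactly pluriclosed; this is the content supporting Conjecture \ref{conj:bismut-skt}. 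For a generic Gauduchon connection $(\varepsilon\neq0,\frac12)$ the two conditions should collapse the admissible parameters to $d\omega=0$, leaving only the torus $\mathfrak{h}_1$ and the diagonal metrics $u=v=z=0$ on $\mathfrak{g}_2^0$, both K\"ahler.

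The main obstacle is the volume and rigidity of the symbolic computation rather than any isolated conceptual step: the full curvature tensor of $\nabla^\varepsilon$ must be produced, for every complex structure in the two tables and for the entire nine-parameter generic metric, as a rational function of the parameters, and the ensuing polynomial systems solved. Treating $\varepsilon$ as a free parameter is the delicate point for the third item: since $R^\varepsilon$ is only quadratic in $\varepsilon$ one may organize the work by the coefficients of $1,\varepsilon,\varepsilon^2$, but the conclusion must hold uniformly for every $\varepsilon\notin\{0,\frac12\}$, so one cannot simply specialize to a convenient value. The precise parameter descriptions obtained from this analysis are recorded in Theorem \ref{thm:solv-precise}.
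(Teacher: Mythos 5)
Your proposal matches the paper's own strategy essentially point for point: reduce to the Lie-algebra level via Remark~\ref{caracterizacionKL}, run a case-by-case symbolic computation (Sage/Mathematica) of $R^{\varepsilon}$ for the generic metric \eqref{eq:metric} over the families of Tables~\ref{table:nil-cplx} and~\ref{table:solv-cplx}, use Theorem~\ref{thm:yang-zheng-balanced} to cut the Chern case down to balanced metrics and Chern-flatness to settle the holomorphically-parallelizable families, and identify the surviving metrics as pluriclosed or K\"ahler against the known classifications. The only heuristic that plays out differently in the paper is your expectation that the type condition \eqref{eq:type-v2} is the main discriminator: in several cases (notably the holomorphically-parallelizable ones) the type condition holds for every $\varepsilon$ and it is the Bianchi-type symmetry $B^{\varepsilon}$ that excludes $\varepsilon\neq 0$, but since you impose both conditions this does not affect the argument.
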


Notice that Theorem \ref{thm:solv} yields that Conjectures~\ref{conj:bismut-skt} and~\ref{conj:gauduchon-kahler-like} are satisfied for such class of six-dimensional Calabi-Yau solvmanifolds.
It also shows that the property of being K\"ahler-like for the Chern connection is neither open nor closed under holomorphic deformations of the complex structure: as for non-closedness, it follows by

\begin{cor}\label{cor:kl-non-closed}
The Lie algebra $\mathfrak g_8$ admits Chern-flat Hermitian metrics for complex structures in Family (Siv3), which admit limits in Family (Siv1) which are not even balanced \cite[Theorem 5.2]{fino-otal-ugarte}.
\end{cor}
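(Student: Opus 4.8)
The plan is to exhibit an explicit family of complex structures in Family (Siv3), each admitting a Chern-flat invariant Hermitian metric, and to show that as a parameter degenerates, the complex structures converge to one in Family (Siv1), while the limiting structure supports no balanced (hence no Chern-flat, hence no Chern-K\"ahler-like) metric. First I would recall that all structures in Tables~\ref{table:nil-cplx} and~\ref{table:solv-cplx} are defined on the \emph{same} underlying real Lie algebra $\mathfrak g_8=(16-25, 15+26, -36+45, -35-46, 0, 0)$, so that Family (Siv1) and Family (Siv3) are genuinely different invariant complex structures $J_A$ parametrized by $A\in\mathbb C$ on one fixed solvmanifold. This is the key structural observation that makes a deformation argument possible: the non-closedness of the K\"ahler-like property is witnessed within a single manifold, by varying $J$.

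Next I would make the convergence precise. The structure equations for (Siv3) read $d\varphi^1=A\,\varphi^{13}-\varphi^{1\bar3}$, $d\varphi^2=-A\,\varphi^{23}+\varphi^{2\bar3}$, $d\varphi^3=0$, valid for $A\in\mathbb C$ with $|A|\neq1$, whereas (Siv1) has $d\varphi^1=-\varphi^{13}$, $d\varphi^2=\varphi^{23}$, $d\varphi^3=0$. I would therefore consider the limit $A\to0$: formally, the (Siv3) equations degenerate to $d\varphi^1=-\varphi^{1\bar3}$, $d\varphi^2=\varphi^{2\bar3}$, which after a linear change of coframe (conjugating the role of $\varphi^3$) matches the holomorphically-parallelizable equations of (Siv1). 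The content here is that the $A=0$ member lies on the boundary of the $|A|\neq1$ open set, so (Siv1) is realized as a genuine central limit of a holomorphic family $\{J_A\}$ inside the (Siv3) parameter space. I would verify that the integrability of $J_A$ is maintained throughout and that $A\mapsto J_A$ is holomorphic in the sense of Kuranishi.

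Then I would establish the two endpoints of the dichotomy. For the generic (Siv3) structure with $A\neq0$, I would invoke the balanced-metric classification already recorded in the excerpt: on $\mathfrak g_8$ in Family (Siv3), balanced metrics of the form~\eqref{eq:metric} exist (characterized by $v=z=0$). By Theorem~\ref{thm:solv}, $\mathfrak g_8$ is among the Lie algebras whose Chern connection can be K\"ahler-like, and since (Siv1) is holomorphically-parallelizable, \emph{every} invariant metric there is Chern-flat; for (Siv3) one exhibits explicit Chern-flat metrics directly from the curvature formulas $(R^{\varepsilon,\rho})_{IHKL}$ at $\varepsilon=\rho=0$. For the limit, I would cite \cite[Theorem 5.2]{fino-otal-ugarte}, which asserts that the (Siv1) limit structure admits no balanced metric at all. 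Since balancedness is a necessary condition for the Chern connection to be K\"ahler-like (Theorem~\ref{thm:yang-zheng-balanced}), and Chern-flatness forces balancedness a fortiori, the limit cannot carry any Chern-K\"ahler-like metric, proving non-closedness.

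The main obstacle I anticipate is the bookkeeping in the degeneration: verifying that the naive substitution $A=0$ in the (Siv3) equations really produces a structure linearly equivalent to (Siv1) requires a careful change of coframe, since the holomorphic terms $A\,\varphi^{13}$ vanish while the anti-holomorphic terms $\mp\varphi^{1\bar3}$ survive, so the limiting $\partial$ and $\bar\partial$ operators must be matched against the parallelizable model rather than read off termwise. Once that equivalence is secured, the rest reduces to quoting the already-established balanced classification and Theorem~\ref{thm:yang-zheng-balanced}; I would be careful to state explicitly that the limit is taken within the closure of the (Siv3) parameter region and that the family is holomorphic, so that ``central limit'' is justified and the conclusion about non-closedness of the K\"ahler-like property follows cleanly.
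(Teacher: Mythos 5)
Your skeleton --- Chern-flat metrics on Family (Siv3), a degeneration of complex structures, non-balancedness of the limit, and Theorem~\ref{thm:yang-zheng-balanced} to conclude --- is the intended argument (the paper's ``proof'' is exactly the combination of Theorem~\ref{thm:solv-precise} with the family provided by \cite[Theorem 5.2]{fino-otal-ugarte}), but two of your steps are genuinely broken. First, the degeneration: since Family (Siv3) is defined for all $A\in\mathbb{C}$ with $|A|\neq 1$, the value $A=0$ is an \emph{interior} point of the family, so letting $A\to 0$ you never leave (Siv3), and the structure $d\varphi^1=-\varphi^{1\bar3}$, $d\varphi^2=\varphi^{2\bar3}$, $d\varphi^3=0$ is \emph{not} equivalent to (Siv1). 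Your proposed identification, ``conjugating the role of $\varphi^3$'', i.e.\ declaring $\bar\varphi^3$ to be a $(1,0)$-form, is not a $\mathbb{C}$-linear change of coframe: it defines a \emph{different} complex structure on the same real Lie algebra, and the family $J_A$ does not converge to that structure as $A\to0$. Nor can any equivalence exist: (Siv1) is holomorphically parallelizable, while for (Siv3) with $A=0$ one has $d\varphi^1(\varphi_1,\bar\varphi_3)=-1\neq 0$, so the bracket of a $(1,0)$- with a $(0,1)$-vector has a non-trivial $(1,0)$-part and the structure is not holomorphically parallelizable; consistently, Table~\ref{table:solv-cplx} lists (Siv1) and (Siv3) as inequivalent classes. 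The parallelizable structure is in fact reached from (Siv3) in the opposite regime: setting $\psi^3:=A\varphi^3$ one gets $d\varphi^1=\varphi^1\wedge\psi^3-\bar{A}^{-1}\varphi^1\wedge\bar\psi^3$ and $d\varphi^2=-\varphi^2\wedge\psi^3+\bar{A}^{-1}\varphi^2\wedge\bar\psi^3$, which tend to the (Siv1) equations as $|A|\to\infty$.

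Second, and fatally, your endpoint is self-contradictory. You assert that on (Siv1) ``\emph{every} invariant metric is Chern-flat'' (true, by holomorphic parallelizability), and shortly after that the (Siv1) limit ``supports no balanced (hence no Chern-flat, hence no Chern-K\"ahler-like) metric'', attributing this to \cite[Theorem 5.2]{fino-otal-ugarte}. By the very Theorem~\ref{thm:yang-zheng-balanced} you invoke, a Chern-flat metric on a compact complex manifold is K\"ahler-like and hence balanced, so the two claims cannot coexist; indeed the paper states explicitly that on (Siv1) any invariant metric is balanced. The central limit furnished by \cite[Theorem 5.2]{fino-otal-ugarte} admits no balanced metric, and since the only invariant complex structures on $\mathfrak{g}_8$ carrying no balanced metric are those in Family (Siv2), that limit lies in Family (Siv2), not (Siv1); the occurrence of ``(Siv1)'' in the statement of Corollary~\ref{cor:kl-non-closed} must be read this way, for otherwise the corollary would contradict both the paper's balanced classification and the non-closedness claim it is meant to support. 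Concretely, the relevant family is, for instance, $d\psi^1=2\sqrt{-1}\,\psi^{13}-\bar a\,\psi^{1\bar3}+\psi^{3\bar3}$, $d\psi^2=-2\sqrt{-1}\,\psi^{23}+\bar a\,\psi^{2\bar3}+x\,\psi^{3\bar3}$, $d\psi^3=0$: for $a\neq0$ one absorbs the $\psi^{3\bar3}$-terms (possible precisely because the $\psi^{1\bar3}$-coefficient is nonzero) and rescales $\psi^3$ to recognize (Siv3) with $A=2\sqrt{-1}/a$, which carries Chern-flat diagonal metrics by Theorem~\ref{thm:solv-precise}, while the central fiber $a=0$ is (Siv2), which carries no balanced and hence no Chern-K\"ahler-like metric. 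With your version of the limit the conclusion simply fails: a central limit in (Siv1) would still carry Chern-flat, hence K\"ahler-like, metrics, and no failure of closedness would follow.
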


The following result makes Theorem~\ref{thm:solv} more precise, in specifying the complex structures $J$ and the $J$-Hermitian metrics $\omega$ in each case when the connections are K\"ahler-like.

\begin{thm}\label{thm:solv-precise}
In the conditions of Theorem~\ref{thm:solv}, we have the following.
\begin{itemize}
\item The Chern connection $\nabla^{Ch}$ is K\"ahler-like precisely in the following cases:

\begin{itemize}
\item $\mathfrak{h}_1$, with $J$ in Family (Np) with $\rho=0$, with any $\omega$ given by \eqref{eq:metric}, which is in fact K\"ahler and Chern-flat;

\item $\mathfrak{h}_5$, with $J$ in the holomorphically-parallelizable Family (Np) with $\rho=1$, with any $\omega$ given by \eqref{eq:metric}, which is in fact Chern-flat;

\item $\mathfrak{g}_1$, with $J$ in Family (Si) with $\theta=0$, with $\omega	$ given by \eqref{eq:metric} with $u=v=z=0$, which is in fact Chern-flat;

\item $\mathfrak{g}_2^0$, with $J$ in Family (Si) with $\theta=\frac{\pi}{2}$,  with $\omega$ given by \eqref{eq:metric} with $u=v=z=0$, which is in fact K\"ahler and Chern-flat;

\item $\mathfrak{g}_2^{\alpha}$ with $\alpha>0$, with $J$ in Family (Si) with $\theta\not\in\{0,\frac{\pi}{2}\}$, with $\omega$ given by \eqref{eq:metric} with $u=v=z=0$, which is in fact Chern-flat;

\item $\mathfrak{g}_8$, with $J$ in the holomorphically-parallelizable Family (Siv1), with any $\omega$ given by \eqref{eq:metric}, which is in fact Chern-flat;

\item $\mathfrak{g}_8$, with $J$ in Family (Siv3), with $\omega$ given by \eqref{eq:metric} with $u=v=z=0$, which is in fact Chern-flat.
\end{itemize}
In any case, the metric is balanced by \cite[Theorem 1.3]{yang-zheng}.

\item The Strominger-Bismut connection $\nabla^+$ is K\"ahler-like precisely in the previous K\"ahler cases (see $\mathfrak h_1$ and $\mathfrak g_2^0$) and in the following cases:

\begin{itemize}
\item $\mathfrak{h}_2$, with $J$ in Family (Ni) with $\rho=\lambda=0$ and $D=\sqrt{-1}$, with $\omega$ given by \eqref{eq:metric} with $r^2=1$, $u=v=z=0$ (not Strominger-Bismut-flat);

\item $\mathfrak{h}_8$, with $J$ in Family (Ni) with $\rho=\lambda=D=0$,  with any $\omega$ given by \eqref{eq:metric} (not Strominger-Bismut-flat);

\item $\mathfrak{g}_4$, with $J$ in Family (Siii1), with $\omega$ given by \eqref{eq:metric} with $u=v=z=0$ (not Strominger-Bismut-flat).
\end{itemize}
In any case, the metric is pluriclosed.

\item The Gauduchon connection $\nabla^{\varepsilon}$ for some $\varepsilon \in \mathbb{R}\setminus\{0,\frac12\}$ is K\"ahler-like precisely in the previous K\"ahler cases (see $\mathfrak h_1$ and $\mathfrak g_2^0$).
\end{itemize}
\end{thm}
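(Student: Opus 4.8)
The plan is to reduce the statement to a finite collection of algebraic computations at the level of the Lie algebra $\mathfrak g$. Since $J$, $g$, and hence all the Gauduchon connections $\nabla^\varepsilon$ are invariant, the curvature $R^\varepsilon$ is determined by the structure constants $c_{IH}^K$ recorded in Tables~\ref{table:nil-cplx} and~\ref{table:solv-cplx} and by the six metric parameters $r^2,s^2,t^2,u,v,z$ of the generic Hermitian form \eqref{eq:metric}, subject to the positivity constraints underlying \eqref{det}. By Remark~\ref{caracterizacionKL}, the K\"ahler-like condition for $\nabla^\varepsilon$ is equivalent to the vanishing of the type components \eqref{eq:type-v2} together with the Bianchi symmetry \eqref{eq:bianchi1-v2}, so I would first assemble these into an explicit system of polynomial equations in the metric parameters with coefficients depending on $\varepsilon$. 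Feeding the Christoffel symbols $(\Gamma^{\varepsilon,\rho})_{IH}^{K}$, taken on the Gauduchon line $\varepsilon+\rho=\tfrac12$, into the curvature formula for $(R^{\varepsilon,\rho})_{IHKL}$ shows that each component of $R^\varepsilon$ is a polynomial of degree two in $\varepsilon$; hence so is each K\"ahler-like defect. This single observation lets me treat the three bullets uniformly: evaluate at $\varepsilon=0$ for Chern, at $\varepsilon=\tfrac12$ for Strominger-Bismut, and ask for a root $\varepsilon\notin\{0,\tfrac12\}$ for the intermediate connections.

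For the Chern connection I would exploit the obstruction of Theorem~\ref{thm:yang-zheng-balanced}: being K\"ahler-like forces the metric to be balanced, so only the Lie algebras admitting invariant balanced metrics (namely $\mathfrak h_1$, the family $\mathfrak h_2,\dots,\mathfrak h_6$, the Iwasawa case $\mathfrak h_5$, $\mathfrak h_{19}^-$, $\mathfrak g_1$, $\mathfrak g_2^\alpha$, $\mathfrak g_3$, $\mathfrak g_5$, $\mathfrak g_7$, and $\mathfrak g_8$, as recalled earlier) need to be examined, and on each of them I would substitute the balanced normal form of \eqref{eq:metric} before imposing \eqref{eq:bianchi1-v2}--\eqref{eq:type-v2}. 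Solving the resulting systems should collapse the admissible algebras to $\mathfrak h_1$, $\mathfrak h_5$, $\mathfrak g_1$, $\mathfrak g_2^\alpha$, $\mathfrak g_8$ with precisely the metric constraints (mostly $u=v=z=0$) claimed, and in each surviving case I would confirm Chern-flatness directly, consistently with the holomorphically-parallelizable description recalled earlier.

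For the Strominger-Bismut connection no cohomological shortcut is available, so I would run the computation family by family across both tables, imposing \eqref{eq:bianchi1-v2}--\eqref{eq:type-v2} on the full generic metric and solving for $(r,s,t,u,v,z)$ together with the structural parameters ($\rho,\lambda,D,B,c,\theta,\dots$). The output should isolate $\mathfrak h_2$, $\mathfrak h_8$, $\mathfrak g_4$, plus the K\"ahler cases $\mathfrak h_1$, $\mathfrak g_2^0$ which are automatically K\"ahler-like for every Gauduchon connection; comparing each surviving metric with the pluriclosed locus recalled earlier then verifies that it is pluriclosed, establishing the second bullet and hence Conjecture~\ref{conj:bismut-skt} in this class. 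For the remaining range $\varepsilon\notin\{0,\tfrac12\}$ the degree-two-in-$\varepsilon$ structure is the lever: requiring the K\"ahler-like defect to vanish at a value of $\varepsilon$ beyond the Chern and Bismut ones over-determines the quadratic and should force every curvature component into the K\"ahler pattern, leaving only $\mathfrak h_1$ and $\mathfrak g_2^0$ with $u=v=z=0$, that is, the K\"ahler metrics, as in Conjecture~\ref{conj:gauduchon-kahler-like}.

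The main obstacle is not conceptual but the sheer size of the casework: one must carry a six-parameter metric through the Christoffel and curvature formulas for roughly thirty complex-structure families and then solve large systems of polynomial equalities, all the while tracking the positivity conditions behind \eqref{det} so as not to admit degenerate metrics. This is exactly where I would rely on the symbolic computation software (Sage and Mathematica) to generate and reduce the systems, reserving the hand-analysis for interpreting each solution locus against the balanced, pluriclosed, and K\"ahler characterizations already tabulated.
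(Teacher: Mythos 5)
Your proposal follows essentially the same route as the paper's own proof: reduce to invariant computations on the Lie algebra, characterize the K\"ahler-like condition via Remark~\ref{caracterizacionKL} as the vanishing of the components \eqref{eq:bianchi1-v2} and \eqref{eq:type-v2}, invoke Theorem~\ref{thm:yang-zheng-balanced} to restrict the Chern case to balanced metrics, and then solve the resulting polynomial systems family by family over Tables~\ref{table:nil-cplx} and~\ref{table:solv-cplx} with symbolic software, which is exactly the structure of the paper's case-by-case propositions. The one caveat is that your ``over-determined quadratic'' heuristic for $\varepsilon\notin\{0,\frac12\}$ is not an actual shortcut: vanishing of the degree-two defects at a single $\varepsilon_0$ forces nothing by itself, and the paper must still exclude special roots such as $\varepsilon=\frac14$ in Family (Nii) and $\varepsilon=\frac{-1\pm\sqrt{2}}{2}$ in Family (Niii) by checking further curvature components --- the same casework your plan ultimately defers to the software.
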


The rest of the section is devoted to the proofs of Theorems~\ref{thm:solv} and~\ref{thm:solv-precise}.
Recall that by Remark~\ref{caracterizacionKL} we know that $\nabla^{\varepsilon}$ is K\"ahler-like if and only if the
identities \eqref{eq:bianchi1-v2} and \eqref{eq:type-v2} hold;
more concretely,
$$
R^\varepsilon_{ij\bullet\bullet}=R^\varepsilon_{\bullet\bullet k\ell}=0,\text{ and }
B^\varepsilon_{i\bar jk \bar\ell}:= R^\varepsilon_{i\bar j k\bar \ell}- R^\varepsilon_{k\bar j i\bar \ell}=0,
$$
for any $i,j,k,\ell\in\{1,2,3\}$, where for instance $B^\varepsilon_{1\bar 2 3\bar 1}$ denotes $B^\varepsilon(X_1,\bar X_2,X_3,\bar X_1)$.
Throughout this section,
and also Section~\ref{LCKL}, we make use of this notation.

\begin{rmk}\label{rmk:non-cpt}
We remark that we are doing calculations at the level of the Lie algebra, equivalently, on invariant objects on the Lie group. In particular, this means that our results hold true also at the level of the {\em non-compact} Lie group.
Compare also \cite[Theorem 1.3]{vezzoni-yang-zheng}, where no compactness assumptions is supposed, but the extra assumption on invariant parallel frame, to study flat Gauduchon connections on Lie groups.
\end{rmk}

\subsection{Nilmanifolds}
\subsubsection{Holomorphically-parallelizable nilmanifolds in Family (Np)}
Consider the complex structure equations
$$ d\varphi^1=d\varphi^2=0,\quad d\varphi^3 = \varphi^{12},$$
and a generic (balanced) Hermitian metric given by \eqref{eq:metric}. Let $\{X_1,\,X_2,X_3\}$ be the (1,0)-basis of vectors dual to
$\{\varphi^1,\,\varphi^2,\, \varphi^3\}$, that is, $[X_1,X_2]=-X_3$.

\begin{prop}
The Gauduchon connection $\nabla^{\varepsilon}$ is never K\"ahler-like unless $\varepsilon = 0$.
\end{prop}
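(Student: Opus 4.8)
The plan is to exploit that the complex structure here is holomorphically parallelizable, so that the Chern connection $\nabla^{Ch}=\nabla^{\varepsilon=0}$ is flat, and to treat every other Gauduchon connection as a fixed tensorial deformation of it. First I would record the structure constants: from $d\varphi^3=\varphi^{12}$ one reads off $[X_1,X_2]=-X_3$ and, by conjugation, $[\bar X_1,\bar X_2]=-\bar X_3$, while all mixed brackets $[X_i,\bar X_j]$ vanish because $d\varphi^1=d\varphi^2=0$ and $d\varphi^3$ carries no barred or mixed component. Using the Christoffel formula of Subsection~\ref{subsec:gauduchon-connections} together with the identity $\nabla^{\varepsilon}=\nabla^{\varepsilon,\frac12-\varepsilon}$, a direct subtraction gives $(\Gamma^{\varepsilon})^{K}_{IH}-(\Gamma^{Ch})^{K}_{IH}=\varepsilon\,g^{KL}(T_{IHL}-C_{IHL})$, so that $\nabla^{\varepsilon}=\nabla^{Ch}+\varepsilon\,\gamma$ for the fixed $\mathrm{End}(TX)$-valued $1$-form $\gamma$ with $\gamma^{K}_{IH}=g^{KL}(T_{IHL}-C_{IHL})$, where $T$ and $C$ are computed from $d\omega$ for the generic metric \eqref{eq:metric}.

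Next I would invoke the standard formula for the curvature of a deformed connection, combined with Chern-flatness $R^{Ch}=0$, to obtain
\begin{equation*}
R^{\varepsilon}=\varepsilon\,d^{\nabla^{Ch}}\gamma+\varepsilon^{2}\,[\gamma\wedge\gamma],
\end{equation*}
a polynomial in $\varepsilon$ with no constant term; this already re-proves that $\varepsilon=0$ is K\"ahler-like. For $\varepsilon\neq0$ the characterization of Remark~\ref{caracterizacionKL}, namely $R^{\varepsilon}_{ij\bullet\bullet}=R^{\varepsilon}_{\bullet\bullet k\ell}=0$ in the sense of \eqref{eq:type-v2} together with $B^{\varepsilon}_{i\bar j k\bar\ell}=0$ from \eqref{eq:bianchi1-v2}, becomes, after cancelling one factor of $\varepsilon$, an affine system in $\varepsilon$ of the shape $(\text{linear coefficient})+\varepsilon\,(\text{quadratic coefficient})=0$ in each component.

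The decisive step is then to produce, among these components, an obstruction whose common solution set reduces to $\{\varepsilon=0\}$. The cleanest outcome would be a single type-condition component driven by the unique nontrivial bracket $[X_1,X_2]=-X_3$ — for instance one of the $(2,0)$-slot entries $R^{\varepsilon}_{12\bullet\bullet}$ — whose quadratic contribution happens to drop out, so that its vanishing reads $\varepsilon\cdot c=0$ with $c$ a nonzero expression in the metric coefficients; failing that, I would combine two components into a system that is jointly inconsistent unless $\varepsilon=0$. In either case the essential point is that the relevant coefficient is built from the positive quantities $r^2,s^2,t^2$ and the determinant in \eqref{det}, so it cannot vanish for any admissible metric, which forces $\varepsilon=0$. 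I would close by noting that the whole argument is uniform in the metric parameters, hence the conclusion holds for every invariant Hermitian metric, not merely a generic one.

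I expect the main obstacle to be precisely the bookkeeping for the generic nine-parameter metric: computing $\gamma$, then $d^{\nabla^{Ch}}\gamma$ and $[\gamma\wedge\gamma]$, and extracting a curvature component whose linear coefficient is provably nonzero independently of the off-diagonal parameters $u,v,z$. The genuine risk is a conspiracy among these parameters that makes a naively chosen coefficient vanish on a proper subvariety; to forestall this I would select the component whose linear coefficient factors through $\det\Omega$ (guaranteed positive by \eqref{det}), or, if no single component suffices, adjoin a second equation from $B^{\varepsilon}=0$ so that their resultant in the metric parameters is a nonzero multiple of $\varepsilon$.
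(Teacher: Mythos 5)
Your structural setup is sound and consistent with the paper's conventions: indeed $\nabla^{\varepsilon}=\nabla^{\varepsilon,\frac12-\varepsilon}$, so $\nabla^{\varepsilon}=\nabla^{Ch}+\varepsilon\,\gamma$ with $\gamma^{K}_{IH}=g^{KL}(T_{IHL}-C_{IHL})$, and since the structure is holomorphically parallelizable one has $R^{Ch}=0$, hence $R^{\varepsilon}=\varepsilon\, d^{\nabla^{Ch}}\gamma+\varepsilon^{2}\,\gamma\wedge\gamma$ has no constant term. The paper does not phrase things this way — it simply computes the curvature of $\nabla^{\varepsilon}$ for the generic metric \eqref{eq:metric} — but your formula is a legitimate repackaging of the same computation, and your reading of the structure constants is correct.

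The gap is in the decisive step, which you leave as a search, and your primary candidate for the obstruction demonstrably fails. The paper's computation shows that for this family the type conditions hold identically in $\varepsilon$: $R^{\varepsilon}_{ij\bullet\bullet}=R^{\varepsilon}_{\bullet\bullet k\ell}=0$ for all indices and \emph{all} $\varepsilon$, so no $(2,0)$-slot entry $R^{\varepsilon}_{12\bullet\bullet}$ can force $\varepsilon=0$; the obstruction lives entirely in the Bianchi symmetries. Moreover its shape is the opposite of what you predict: for the component that works it is the linear contribution (from $d^{\nabla^{Ch}}\gamma$) that drops out, not the quadratic one, and the paper exhibits
$$
B^{\varepsilon}_{1\bar{1}3\bar{3}}=\frac{2\,\varepsilon^{2}\,t^{4}\,(r^{2}t^{2}-|z|^{2})}{8\sqrt{-1}\,\det\Omega},
$$
which is purely quadratic in $\varepsilon$ and vanishes if and only if $\varepsilon=0$, because $r^{2}t^{2}>|z|^{2}$ and $8\sqrt{-1}\,\det\Omega>0$ by positive-definiteness of the metric — exactly the kind of positivity you anticipated, but attached to the $\gamma\wedge\gamma$ term rather than to $d^{\nabla^{Ch}}\gamma$. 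Your fallback of scanning the $B^{\varepsilon}$ components would eventually land on this (a single component suffices, no resultant needed), so the strategy is rescuable; but as written the proposal neither identifies the obstruction nor proves that one exists, and its stated expectation about where, and in what degree in $\varepsilon$, the obstruction occurs is incorrect.
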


\begin{proof}
A direct calculation shows that $R^{\varepsilon}_{ij\bullet\bullet}=R^{\varepsilon}_{\bullet\bullet k\ell}=0$ for any $i,j,k,\ell\in\{1,2,3\}$.  On the other hand, what symmetries of Bianchi-type concerns, we can observe that:
$$
B^{\varepsilon}_{1\bar{1}3\bar{3}} = \frac{2\varepsilon^2t^4(r^2t^2-|z|^2)}{8\sqrt{-1}\det\Omega}= 0\Longleftrightarrow \varepsilon = 0.
$$

Moreover, $B^{0}\equiv 0$, {\itshape i.e.} $\nabla^{Ch}$ is K\"ahler-like; in fact, this is true for any holomorphically-parallelizable manifold.
\end{proof}

\subsubsection{Nilmanifolds in Family (Ni)}
According to \cite[equations (2.4)--(2.5)]{ugarte-villacampa}, any Hermitian structure can be expressed as:

\begin{gather}\label{F-I}
d\varphi^1=d\varphi^2=0,\quad d\varphi^3 = \rho\,\varphi^{12} + \varphi^{1\bar 1} + \lambda\,\varphi^{1\bar2} +  D\,\varphi^{2\bar2}, \\
2\omega = \sqrt{-1}(\varphi^{1\bar 1} + s^2\,\varphi^{2\bar2} +  t^2\,\varphi^{3\bar3}) + u\,\varphi^{1\bar2} -\bar u\,\varphi^{2\bar1},\nonumber
\end{gather}
where $\rho\in\{0,1\},\, \lambda\geq 0,\, \Im D\geq 0,$ and $s^2>|u|^2,\, t^2>0$; {\itshape i.e.} we can take $v=z=0$ and $r^2=1$ in the generic expression \eqref{eq:metric}.

\begin{lem}
In the notation as above, if $\nabla^{\varepsilon}$ is K\"ahler-like, then $\varepsilon=\frac12$, $\rho=0$ and $\omega$ is pluriclosed.
\end{lem}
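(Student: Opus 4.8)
The plan is to specialize the general curvature machinery of this section to the data \eqref{F-I} and then impose the characterization of Remark~\ref{caracterizacionKL}. First I would read off the structure constants $c_{IH}^K$ from $d\varphi^1=d\varphi^2=0$ and $d\varphi^3=\rho\,\varphi^{12}+\varphi^{1\bar1}+\lambda\,\varphi^{1\bar2}+D\,\varphi^{2\bar2}$ through $d\alpha(x,y)=-\alpha([x,y])$, record the Hermitian matrix $(g_{KL})$ and its inverse coming from \eqref{F-I} (that is, $r^2=1$ and $v=z=0$ in \eqref{eq:metric}), and assemble the torsion tensors $T$ and $C$. Writing the Gauduchon connection as $\nabla^{\varepsilon}=\nabla^{\varepsilon,\,\frac12-\varepsilon}$, so that $\varepsilon=0$ is Chern and $\varepsilon=\frac12$ is Strominger-Bismut, its Christoffel symbols are affine in $\varepsilon$ and hence the $(4,0)$-curvature $(R^{\varepsilon})_{IHKL}$ is quadratic in $\varepsilon$; its coefficients are rational expressions in $\rho,\lambda,D,s^2,t^2,u$ over $\det\Omega$. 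This step is mechanical but bulky, so I would run it in Sage/Mathematica.

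With $R^{\varepsilon}$ computed, the problem becomes the polynomial system of Remark~\ref{caracterizacionKL}: $R^{\varepsilon}_{ij\bullet\bullet}=R^{\varepsilon}_{\bullet\bullet k\ell}=0$ and $B^{\varepsilon}_{i\bar jk\bar\ell}=0$. The equations $R^{\varepsilon}_{\bullet\bullet k\ell}=0$ hold automatically, since the curvature of a Hermitian connection preserves $T^{1,0}X$ and $g$ pairs $(1,0)$ with $(0,1)$ vectors; the vanishing of the $(2,0)$-part of the curvature two-form and of the Bianchi tensor $B^{\varepsilon}$ carries the content. As each $B^{\varepsilon}$ is quadratic in $\varepsilon$, the crux is to locate diagonal components — candidates being $B^{\varepsilon}_{1\bar13\bar3}$ and $B^{\varepsilon}_{2\bar23\bar3}$ — that factor as $\varepsilon(2\varepsilon-1)$ times a quantity strictly positive by the metric constraints $t^2>0$ and $s^2>|u|^2$. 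These force $\varepsilon\in\{0,\tfrac12\}$; evaluating a further component at the Chern value $\varepsilon=0$ shows it is nonzero on \eqref{F-I} (these structures being non-parallelizable), which excludes $\varepsilon=0$ and leaves $\varepsilon=\tfrac12$.

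Fixing $\varepsilon=\tfrac12$, I would substitute back and interpret the remaining vanishing conditions as equations in $\rho,\lambda,D$, expecting the real and imaginary parts to decouple: one equation should force $\rho=0$, and another should reproduce $2\,\Re D=|\lambda|^2$. To identify these relations with pluriclosedness, I would compute $\partial\overline\partial\omega$ directly; since $\varphi^1,\varphi^2$ are closed, only the $\varphi^{3\bar3}$ summand of $\omega$ has non-closed factors, and a short computation gives $\partial\overline\partial\omega=\frac{\sqrt{-1}\,t^2}{2}\,\bigl(2\,\Re D-|\lambda|^2-\rho^2\bigr)\,\varphi^{12\bar1\bar2}$. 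Thus $\rho=0$ together with $2\,\Re D=|\lambda|^2$ is exactly the pluriclosed condition $\partial\overline\partial\omega=0$, which finishes the proof.

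The main obstacle will not be conceptual but organizational: the curvature entries are large rational functions, and the delicate point is to exhibit the precise components of $B^{\varepsilon}$ whose factorizations separate the three conclusions $\varepsilon=\tfrac12$, $\rho=0$, and pluriclosedness, while invoking the positivity of $\det\Omega$ and of the leading metric minors to discard the spurious factors. Equally, I would need to check carefully that the equation $R^{\varepsilon}_{ij\bullet\bullet}=0$ contributes nothing beyond the Bianchi block (as happens in the parallelizable case), so that no case is lost in the reduction.
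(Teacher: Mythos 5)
Your overall framework—compute the invariant curvature symbolically and impose the characterization of Remark~\ref{caracterizacionKL}—is the same as the paper's, and your direct computation of $\partial\overline\partial\omega$ correctly recovers the pluriclosedness criterion $\rho+\lambda^2-(D+\bar D)=0$ that the paper instead cites from Fino--Parton--Salamon. But the pivotal step of your plan fails as stated. The components you nominate to pin down $\varepsilon$ do not have the factorization you predict: by the paper's Appendix~\ref{app:BC-Ni},
$$
B^{\varepsilon}_{1\bar{1}3\bar{3}}=\frac{t^4}{2(s^2-|u|^2)}\left[4\rho\varepsilon^2-s^2(2\varepsilon-1)(4\varepsilon-1)\right],
$$
and $B^{\varepsilon}_{2\bar{2}3\bar{3}}$ likewise carries a $4s^2\rho\varepsilon^2$ term; nothing of the form $\varepsilon(2\varepsilon-1)\times(\text{positive})$ appears. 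When $\rho=1$ these components vanish at values of $\varepsilon$ outside $\{0,\tfrac12\}$ for suitable $s^2$ (e.g.\ $\varepsilon=1$, $s^2=\tfrac43$), so the diagonal Bianchi components you single out cannot force $\varepsilon\in\{0,\tfrac12\}$ before $\rho$ has been eliminated. This $\rho$--$\varepsilon$ coupling is precisely why the paper argues in the opposite order: it first uses the type-condition component $R^{\varepsilon}_{231\bar{3}}=\frac{2\varepsilon^2\rho s^2t^6}{8\sqrt{-1}\det\Omega}$, whose vanishing forces $\varepsilon\rho=0$, excludes $\varepsilon=0$ via $B^{0}_{1\bar{1}3\bar{3}}\neq0$, and only then pins $\varepsilon=\tfrac12$. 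In particular, your expectation that the block $R^{\varepsilon}_{ij\bullet\bullet}=0$ ``contributes nothing beyond the Bianchi block'' is the opposite of what the paper's proof actually exploits.

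That said, your ordering ($\varepsilon$ first, then $\rho$) is salvageable inside the Bianchi block, but with different components and a case split: $B^{\varepsilon}_{1\bar{1}2\bar{1}}=-\frac{t^2}{2}\lambda(2\varepsilon-1)^2$ forces $\lambda=0$ or $\varepsilon=\tfrac12$; if $\lambda=0$, then $B^{\varepsilon}_{1\bar{3}3\bar{1}}=\frac{s^2t^4\varepsilon(2\varepsilon-1)}{s^2-|u|^2}$ (a component into which $\rho$ never enters) forces $\varepsilon\in\{0,\tfrac12\}$, and $\varepsilon=0$ is excluded since $B^{0}_{1\bar{1}3\bar{3}}=-\frac{s^2t^4}{2(s^2-|u|^2)}\neq0$; finally, at $\varepsilon=\tfrac12$ one has $B^{\frac12}_{1\bar{1}3\bar{3}}=\frac{t^4\rho}{2(s^2-|u|^2)}$, which kills $\rho$, and then $B^{\frac12}_{1\bar{1}2\bar{2}}=-\frac{t^2}{2}\bigl(\lambda^2-(D+\bar D)\bigr)$ gives pluriclosedness. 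So a proof along your lines exists, but not with the components and factorizations you named; as written, the crux of your plan would fail exactly on the $\rho=1$ structures. (Also, your parenthetical justification for excluding $\varepsilon=0$ via non-parallelizability is not the operative reason—non-parallelizability obstructs Chern-flatness, not the K\"ahler-like property; the exclusion really does require exhibiting a nonvanishing component at $\varepsilon=0$.)
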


\begin{proof}
Let us take the element   $R^{\varepsilon}_{231\bar{3}}=\frac{2 \varepsilon^2 \rho s^2 t^6}{8\sqrt{-1}\det\Omega}$. It vanishes if and only
if $\varepsilon\rho = 0$. If $\varepsilon=0$, then $B^{0}_{1\bar{1}3\bar{3}} = -\frac{s^2 t^4}{2(s^2-|u|^2)}\neq0$, hence $\rho=0$.

Now, substituting $\rho=0$ we find:
$$B^{\varepsilon}_{1\bar{1}2\bar{1}}=\frac{-\lambda (1-2\varepsilon)^2t^2}{2}=0 \Longleftrightarrow \varepsilon=\frac12\text{ or } \lambda = 0.$$
Finally, if $\lambda=0$, then $B^{\varepsilon}_{1\bar{3}3\bar{1}} = \frac{s^2 t^4\varepsilon  (2\varepsilon-1) }{s^2-|u|^2}\neq 0$, hence $\varepsilon=\frac12$.

For the last statement, according to \cite[equation (3)]{fino-parton-salamon}, the Hermitian metric $\omega$ is pluriclosed if and only if the parameters
in the complex structure (Ni) satisfy
$$\rho + \lambda^2 -(D+\bar D) = 0.$$
Now, in the case $\rho=0$, we have that:
$$B^{\frac12}_{1\bar{1}2\bar{2}} = \frac{-t^2}{2} (\lambda^2 - (D+\bar D)),$$
whence the statement.
\end{proof}

\begin{prop}
In the notation as above, $\nabla^{\varepsilon}$ is K\"ahler-like if and only if $\varepsilon=\frac12$ and

\begin{itemize}
\item either the Lie algebra is $\frh_2$ and the Hermitian structure is given by   $(\rho, \lambda, D) = (0,0,\sqrt{-1})$ and $u=0$,
\item or the Lie algebra is $\frh_8$ and the Hermitian structure is anyone defined on  the complex structure $(\rho, \lambda, D) = (0,0,0)$.
\end{itemize}

\end{prop}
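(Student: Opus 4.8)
The plan is to pick up exactly where the preceding Lemma leaves off. That Lemma already reduces us to $\varepsilon=\frac12$, $\rho=0$, and $\omega$ pluriclosed, so the structure equations collapse to $d\varphi^3=\varphi^{1\bar1}+\lambda\,\varphi^{1\bar2}+D\,\varphi^{2\bar2}$ subject to the pluriclosed relation $\lambda^2=D+\bar D=2\,\Re D$, while the metric \eqref{eq:metric} already has $r^2=1$ and $v=z=0$, leaving only $s$, $t$, $u$ free. It therefore remains to determine $\lambda$, $D$, and $u$. Note that the component $B^{\varepsilon}_{1\bar12\bar1}$ exploited in the Lemma vanishes identically once $\varepsilon=\frac12$, so $\lambda$ is \emph{a priori} unconstrained at this stage; the whole point of the Proposition is that some other curvature symmetry must be brought in to kill it.

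For the ``only if'' direction I would substitute $\varepsilon=\frac12$, $\rho=0$ and the pluriclosed relation into the Christoffel and curvature formulas of the metric-formulas subsection, and recompute the full list of type obstructions $R^{1/2}_{ij\bullet\bullet}=R^{1/2}_{\bullet\bullet k\ell}=0$ together with all Bianchi obstructions $B^{1/2}_{i\bar jk\bar\ell}$. I expect one surviving off-diagonal component (for instance one of the $B^{1/2}_{1\bar 2 k\bar\ell}$) to simplify, after using $\Re D=\lambda^2/2$, to a nonzero metric multiple of $\lambda$, forcing $\lambda=0$. Once $\lambda=0$ the pluriclosed relation becomes $\Re D=0$, so $D=\sqrt{-1}\,y$ with $y=\Im D\geq0$ and $d\varphi^3=\varphi^{1\bar1}+\sqrt{-1}\,y\,\varphi^{2\bar2}$. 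I would then read off the remaining obstructions as functions of $y$ and $u$ alone: I expect one component proportional to $y\,u$ (forcing $y\,u=0$) and one proportional to $y\,(y-1)$ times positive metric factors (forcing $y\in\{0,1\}$). Combining these yields the dichotomy: either $y=0$, giving $D=0$ with $u$ free, or $y=1$ and $u=0$, giving $D=\sqrt{-1}$. These are precisely the two parameter sets $(\rho,\lambda,D)=(0,0,0)$ and $(\rho,\lambda,D)=(0,0,\sqrt{-1})$ claimed, and they automatically exclude $\mathfrak{h}_3,\mathfrak{h}_4,\mathfrak{h}_5,\mathfrak{h}_6$, which would require $\rho=1$, $\lambda\neq0$, or $\Re D\neq0$.

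It then remains to translate these complex-structure parameters into the isomorphism type of $\mathfrak{g}$. Consulting the classification recalled in Table~\ref{table:nil-cplx} (and \cite{couv, ugarte-villacampa}), the case $D=0$ gives $d\varphi^3=\varphi^{1\bar1}$, a single nonzero real structure equation, which is $\mathfrak{h}_8=(0,0,0,0,0,12)$; the case $D=\sqrt{-1}$ gives $d\varphi^3=\varphi^{1\bar1}+\sqrt{-1}\,\varphi^{2\bar2}$, two independent real structure equations, which is $\mathfrak{h}_2=(0,0,0,0,12,34)$. For the converse ``if'' direction I would substitute each of the two parameter sets back into the curvature formulas and verify directly that every $R^{1/2}_{ij\bullet\bullet}$, $R^{1/2}_{\bullet\bullet k\ell}$ and every $B^{1/2}_{i\bar jk\bar\ell}$ vanishes — for $\mathfrak{h}_8$ with arbitrary admissible $s,t,u$, and for $\mathfrak{h}_2$ with $u=0$ — so that $\nabla^{1/2}=\nabla^+$ is indeed K\"ahler-like. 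This is a finite check that closes the equivalence.

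The main obstacle is computational rather than conceptual: before the substitutions $\rho=0$, $\lambda^2=2\,\Re D$ are made, the components of the Strominger-Bismut curvature are unwieldy, so the real effort is the symbolic simplification (carried out with Sage and Mathematica) that exhibits the clean multiples of $\lambda$, of $y\,u$, and of $y(y-1)$ driving the argument. A secondary subtlety lies in the passage from the complex modulus $D$ to the real isomorphism class of $\mathfrak{g}$: one must ensure that the admissible equivalences of complex structures do not identify $D=\sqrt{-1}$ with other purely imaginary values in a way that would enlarge the list, which is exactly what the normalization $r^2=1$ together with the curvature selection of $y=1$ prevents.
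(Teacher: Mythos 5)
Your proposal departs from the paper's proof in one essential way: the paper never tries to derive $\lambda=0$ (nor $D\in\{0,\sqrt{-1}\}$) from curvature identities. It first invokes two classification results --- from \cite{ugarte-transf}, that the only Lie algebras underlying \eqref{F-I} with $\rho=0$ admitting pluriclosed metrics are $\mathfrak{h}_2$ and $\mathfrak{h}_8$, and from \cite{couv}, that up to equivalence of complex structures one may then take $\lambda=0$, hence $(\rho,\lambda,D)\in\{(0,0,\sqrt{-1}),(0,0,0)\}$ --- and only afterwards computes curvature, where the single surviving Bianchi obstruction is $B^{\frac12}_{1\bar33\bar2}$, proportional to $u(D-\bar D)$, giving $u\,\Im D=0$.

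This difference is not cosmetic: your key expectation, that some curvature component reduces to a nonzero metric multiple of $\lambda$ ``forcing $\lambda=0$'', is false, and so is the expected factor $y(y-1)$. Imposing $\varepsilon=\frac12$, $\rho=0$ and the pluriclosed relation $\lambda^2=2\Re D$ in the formulas of Appendix \ref{app:BC-Ni}, every type obstruction vanishes identically and the only surviving Bianchi obstructions are, up to nonzero factors,
\begin{equation*}
\lambda(\lambda-2\Im u),\qquad \lambda\bigl(\lambda s^2-2\Im(uD)\bigr),\qquad \lambda(s^2-\bar D)+\sqrt{-1}\,u(D-\bar D),
\end{equation*}
and these admit common zeros with $\lambda\neq0$: for instance $\lambda=1$, $D=\tfrac12$, $u=\tfrac{\sqrt{-1}}{2}$, $s^2=\tfrac12$, any $t^2>0$, satisfies all of them together with the positivity constraints, so this presentation has K\"ahler-like $\nabla^+$ although $\lambda\neq0$ and $D\notin\{0,\sqrt{-1}\}$. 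Likewise, for $\lambda=u=0$ every $D=\sqrt{-1}\,y$ with $y>0$ passes all curvature tests, so nothing of the form $y(y-1)$ appears and curvature does not select $y=1$. The resolution is that the normal form \eqref{F-I} carries residual equivalences: the parameters above describe a structure on $\mathfrak{h}_2$ (the pencil of exact $2$-forms has signature $(1,1)$) equivalent to a diagonal metric on the standard complex structure, and $D=\sqrt{-1}\,y$ is equivalent to $D=\sqrt{-1}$ by rescaling --- contrary to your final remark, the equivalences \emph{do} identify all purely imaginary $D$, and that is precisely why the list does not enlarge. So the normalizations $\lambda=0$ and $D\in\{0,\sqrt{-1}\}$ in the statement come from the classification up to equivalence, not from curvature; a purely computational argument in the normal form, as you propose, either gets stuck at the step ``$\lambda=0$'' or must re-prove those equivalence statements by hand. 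Your final obstruction $u\,\Im D=0$, the identification of $\mathfrak{h}_2$ and $\mathfrak{h}_8$, and the converse verification are correct and agree with the paper.
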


\begin{proof}
  In \cite{ugarte-transf}, it is shown that the only Lie algebras underlying equations~\eqref{F-I} with $\rho=0$ admitting a Hermitian pluriclosed metric are $\frh_2$ and $\frh_8$.  Moreover, $\Re D = \frac{\lambda^2}{2}$.  Taking into account the classification of complex structures up to equivalence \cite{couv}, $\lambda$ can take only the value 0, so we are forced to:
  \begin{itemize}
  \item $(\rho, \lambda, D) = (0,0,\sqrt{-1})$, namely, the Lie algebra $\frh_2$;
  \item $(\rho, \lambda, D) = (0,0,0)$, namely, the Lie algebra $\frh_8$.
  \end{itemize}
  We are going to study the two cases above.  First, the computation of the curvature elements for $\rho=0$ and $\varepsilon = \sfrac{1}{2}$ yields that, for any $i,j,k,\ell\in\{1,2,3\}$,
  $$R^{\frac12}_{ij\bullet\bullet}=R^{\frac12}_{\bullet\bullet k\ell}=0.$$
  With respect to the relations coming from the Bianchi identity, we summarize the results of the computations in Appendix \ref{app:BC-Ni}.
  In particular, $\varepsilon=\frac12$, $\rho=0$, $\lambda=0$, and $\Re D=0$ yield that
  \begin{equation*}\label{SKT}
  B^{\frac12}\equiv 0\Longleftrightarrow B^{\frac12}_{1\bar{3}3\bar{2}}=\frac{ t^4 u (D-\bar D)}{16\det\Omega}=0 \Longleftrightarrow  u\, \Im D=0.
  \end{equation*}
\end{proof}

\begin{rmk}
In the cases above, the Strominger-Bismut connection is not flat.
In fact, $R^+_{1\bar{1}1\bar{1}} = R^+_{2\bar{2}2\bar{2}} = t^2\neq 0$ for $\frak h_2$,
and $R^+_{1\bar{1}1\bar{1}} = t^2\neq 0$ for $\frak h_8$, the other components of the curvature being zero.
\end{rmk}

\subsubsection{Nilmanifolds in Family (Nii)}
Consider the complex structure equations
$$ d\varphi^1=0,\quad d\varphi^2=\varphi^{1\bar1},\quad d\varphi^3 = \rho\,\varphi^{12} +B \varphi^{1\bar 2} +c \,\varphi^{2\bar1}, $$
where $\rho\in\{0,1\},\, c\geq 0,\, B\in \mathbb C$ satisfying $(\rho, B, c)\neq (0,0,0)$.

\begin{prop}
For nilmanifolds in Family (Nii), the Gauduchon connection $\nabla^{\varepsilon}$ is never K\"ahler-like.
\end{prop}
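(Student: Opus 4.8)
The plan is to mimic the structure used for the other nilmanifold families: namely, to reduce the K\"ahler-like condition to a short list of explicit polynomial identities in the metric parameters and the structural parameters $\rho$, $B$, $c$, and then show these identities are incompatible with the constraint $(\rho,B,c)\neq(0,0,0)$. Concretely, I would first compute the Christoffel symbols $(\Gamma^{\varepsilon,\rho})_{IH}^K$ of $\nabla^\varepsilon=\nabla^{\frac{1-t}{4},\frac{1+t}{4}}$ from the structure constants $c_{IH}^K$ read off the equations $d\varphi^1=0$, $d\varphi^2=\varphi^{1\bar1}$, $d\varphi^3=\rho\,\varphi^{12}+B\,\varphi^{1\bar2}+c\,\varphi^{2\bar1}$, using the formula for $(\Gamma^{LC})_{IH}^K$ together with the torsion terms $T_{IHL}$ and $C_{IHL}$. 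Then I would assemble the curvature components $(R^\varepsilon)_{IHKL}$ via the displayed formula and, by Remark~\ref{caracterizacionKL}, test the vanishing of $R^\varepsilon_{ij\bullet\bullet}$, $R^\varepsilon_{\bullet\bullet k\ell}$, and the Bianchi-type differences $B^\varepsilon_{i\bar jk\bar\ell}$.

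The mechanism I expect to work is the same leading-order-in-$\varepsilon$ phenomenon seen in Families (Np) and (Ni): certain off-diagonal curvature entries carry a factor $\varepsilon^2$ times a structural parameter, forcing $\varepsilon$ or that parameter to vanish, after which a Bianchi difference of the form (const)$\cdot(1-2\varepsilon)^2$ or (const)$\cdot\varepsilon(2\varepsilon-1)$ kills the remaining freedom. The difference here is that Family (Nii) is genuinely non-nilpotent in its complex-structure data only through $d\varphi^2=\varphi^{1\bar1}$, which mixes holomorphic and anti-holomorphic indices already at first order and so tends to produce nonzero curvature terms that cannot be switched off by any choice of $\varepsilon$. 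The concrete strategy is therefore to exhibit one or two specific components—most naturally something like $B^\varepsilon_{1\bar1 2\bar2}$ or $B^\varepsilon_{1\bar1 2\bar1}$, or a term $R^\varepsilon_{12\bullet\bullet}$ detecting $\rho$—whose vanishing over all of $\varepsilon\in\mathbb R$ already forces $\rho=B=c=0$, contradicting $(\rho,B,c)\neq(0,0,0)$. Because the parameter space is small (one real constraint $c\geq0$, one complex $B$, one binary $\rho$) and the metric can be normalized as in \eqref{eq:metric}, this should be a finite case check.

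The main obstacle I anticipate is purely computational bookkeeping rather than conceptual: the metric \eqref{eq:metric} has six independent parameters $r^2,s^2,t^2,u,v,z$, and unlike in Family (Ni) there is no a priori reduction to $v=z=0$, $r^2=1$, so the raw curvature expressions and the inverse metric $g^{KL}$ are cumbersome. I would manage this by first checking whether the type conditions $R^\varepsilon_{ij\bullet\bullet}=R^\varepsilon_{\bullet\bullet k\ell}=0$ already impose strong restrictions (as they may constrain $B$ and $c$ independently of the metric), and only afterward engaging the Bianchi differences. Given the paper's reliance on Sage and Mathematica, the honest presentation is to isolate the few decisive curvature entries, display their closed forms, and argue that their simultaneous vanishing over $\varepsilon\in\mathbb R$ is inconsistent with $(\rho,B,c)\neq(0,0,0)$; the remaining identities are then routine verification left to symbolic computation.

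The subtle point to watch is the distinguished values $\varepsilon\in\{0,\tfrac12\}$: the statement claims $\nabla^\varepsilon$ is \emph{never} K\"ahler-like, so unlike the earlier propositions I must also rule out the Chern case $\varepsilon=0$ and the Strominger-Bismut case $\varepsilon=\tfrac12$. I expect the Chern case to fail through a nonvanishing $B^0_{i\bar jk\bar\ell}$ forced by the term $\varphi^{1\bar1}$ in $d\varphi^2$, and the Strominger-Bismut case to fail because Family (Nii) admits no pluriclosed metric (as recorded in the Special Hermitian metrics subsection, where $\mathfrak h_7,\dots,\mathfrak h_{16},\mathfrak h_{26}^+$ bear neither balanced nor pluriclosed metrics), so by Theorem~\ref{thm:yang-zheng-balanced} and the pluriclosed obstruction no K\"ahler-like connection can occur; alternatively a single explicit nonzero $B^{1/2}$ entry suffices. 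I would therefore close the proof by tabulating, for each of $\varepsilon=0$, $\varepsilon=\tfrac12$, and generic $\varepsilon$, one curvature obstruction that cannot vanish under $(\rho,B,c)\neq(0,0,0)$.
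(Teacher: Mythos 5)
Your overall strategy coincides with the paper's: for $\varepsilon=0$ the paper also dispatches the Chern case by observing that no metric on Family (Nii) is balanced and invoking Theorem~\ref{thm:yang-zheng-balanced}, and for $\varepsilon\neq 0$ it does exactly what you outline, namely isolates a few decisive components --- first $R^{\varepsilon}_{232\bar{3}}$, whose vanishing forces $\rho B=0$, then the Bianchi differences $B^{\varepsilon}_{3\bar{2}2\bar{3}}$ and $B^{\varepsilon}_{3\bar{3}2\bar{2}}$ --- and shows their simultaneous vanishing over the metric parameters is incompatible with $(\rho,B,c)\neq(0,0,0)$. (One detail your plan does not anticipate but would surface in the computation: when $B\neq0$, $\rho=0$, the vanishing of $B^{\varepsilon}_{3\bar{2}2\bar{3}}$ leaves not only $\varepsilon=\tfrac12$ but also $\varepsilon=\tfrac14$ as exceptional values, and the paper must kill $\varepsilon=\tfrac14$ separately via $B^{\sfrac{1}{4}}_{2\bar{1}1\bar{2}}=\frac{|B|^2t^2}{4}\neq0$.)

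There is, however, one genuine logical flaw in your treatment of $\varepsilon=\tfrac12$. Your primary argument --- that Family (Nii) admits no pluriclosed metric, ``so by \dots the pluriclosed obstruction no K\"ahler-like connection can occur'' --- is circular within this paper: the implication ``Strominger-Bismut K\"ahler-like $\Rightarrow$ pluriclosed'' is precisely Conjecture~\ref{conj:bismut-skt}, and the proposition you are proving is part of the evidence (Theorem~\ref{thm:solv}) offered \emph{for} that conjecture. The only proved version available is Theorem~\ref{thm:bismut-flat-skt}, which requires the Strominger-Bismut connection to be \emph{flat}, a strictly stronger hypothesis than K\"ahler-like, so it does not apply. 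Your fallback is the correct route and is what the paper does: at $\varepsilon=\tfrac12$ (with $\rho=0$ already forced) one checks that $R^{\sfrac{1}{2}}_{231\bar{2}}$ and $R^{\sfrac{1}{2}}_{232\bar{1}}$ are proportional to $c$ and $\bar B$ respectively, so their simultaneous vanishing gives $(B,c)=(0,0)$, contradicting $(\rho,B,c)\neq(0,0,0)$. So the proposal is completable, but only if the computational alternative replaces, rather than merely supplements, the appeal to the pluriclosed obstruction. (A cosmetic point: your remark that (Nii) is ``genuinely non-nilpotent in its complex-structure data'' is inaccurate --- these are nilpotent complex structures; the non-nilpotent family on nilmanifolds is (Niii) --- but nothing in your argument rests on it.)
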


\begin{proof}
For the Chern connection, i.e. $\varepsilon=0$, the result follows directly from the fact that $\omega$ is never balanced and by \cite[Theorem 1.3]{yang-zheng}.

As a consequence, we consider $\varepsilon\neq 0$ and the following elements:
$$
\begin{array}{lcl}
  B^{\varepsilon}_{3\bar{2}2\bar{3}} &=& -\frac{t^4 (s^2t^2-|v|^2)}{4\sqrt{-1}\,\det\Omega} \left(\varepsilon^2 \rho^2 - 2|B|^2 \left(\varepsilon-\frac12\right)\left(\varepsilon-\frac14\right)\right),\\[1em]
  B^{\varepsilon}_{3\bar{3}2\bar{2}} &=& \frac{\varepsilon\, t^4 (s^2t^2-|v|^2)}{4\sqrt{-1}\,\det\Omega} \left(\varepsilon c^2 -2 |B|^2 \left(\varepsilon-\frac14\right)\right),

\end{array}
$$
where recall the expression for $\det\Omega$ as in \eqref{det}.

Observe first that $R^{\varepsilon}_{232\bar{3}}=\frac{2 \varepsilon^2 \rho \bar B (s^2 t^2-|v|^2)t^4}{8\sqrt{-1}\det\Omega}=0$ if and only if
$\rho B = 0$.  If $B=0$, then $$(B^{\varepsilon}_{3\bar{2}2\bar{3}}, B^{\varepsilon}_{3\bar{3}2\bar{2}}) = (0,0)\Longleftrightarrow (\rho, c)=(0,0)$$ which is a contradiction.  Therefore, we may assume $B\neq 0$ and $\rho=0$.  Now,
$$B^{\varepsilon}_{3\bar{2}2\bar{3}}=0 \Longleftrightarrow \varepsilon = \frac12 \text{ or } \varepsilon = \frac14.$$

If $\varepsilon = \frac14$, then
$$B^{\frac14}_{3\bar{3}2\bar{2}}=0 \Longleftrightarrow c=0$$
but in this case one can check that $B_{2\bar{1}1\bar{2}}^{\frac14} = \frac{|B|^2 t^2}{4}\neq 0$.

If $\varepsilon = \frac12$, then $R^{\frac12}_{231\bar{2}}=\frac{c (s^2 t^2 - |v|^2)^2}{16\sqrt{-1}\det\Omega}$ and $R^{\frac12}_{232\bar{1}}=\frac{-\bar B (s^2 t^2 - |v|^2)^2}{16\sqrt{-1}\det\Omega}$, therefore
$$ (R^{\frac12}_{231\bar{2}}, R^{\frac12}_{232\bar{1}}) = (0,0)\Longleftrightarrow (B, c)=(0,0) ,$$
providing a contradiction.
\end{proof}

\subsubsection{Nilmanifolds in Family (Niii)}
Consider the complex structure equations
$$d\varphi^1=0,\quad d\varphi^2=\varphi^{13}+\varphi^{1\bar3},\quad d\varphi^3 =\sqrt{-1}\nu\,\varphi^{1\bar 1}+ \sqrt{-1}\,\delta\, (\varphi^{1\bar 2} -\varphi^{2\bar1}), $$
where $\nu = \{0,1\}$ and $\delta = \pm 1$.

\begin{prop}
For nilmanifolds in Family (Niii), the Gauduchon connection $\nabla^{\varepsilon}$ is never K\"ahler-like.
\end{prop}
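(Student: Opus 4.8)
The plan is to apply the characterization in Remark~\ref{caracterizacionKL}: the connection $\nabla^\varepsilon$ is K\"ahler-like precisely when the type conditions $R^\varepsilon_{ij\bullet\bullet}=R^\varepsilon_{\bullet\bullet k\ell}=0$ together with the Bianchi-type identities $B^\varepsilon_{i\bar jk\bar\ell}=0$ all hold for $i,j,k,\ell\in\{1,2,3\}$. First I would fix the $(1,0)$-frame $\{X_1,X_2,X_3\}$ dual to $\{\varphi^1,\varphi^2,\varphi^3\}$ and read off the structure constants $c_{IH}^K$ from the complex structure equations via $d\alpha(x,y)=-\alpha([x,y])$; note that the two choices $\nu\in\{0,1\}$ (with $\delta=\pm1$) of this family correspond to the Lie algebras $\mathfrak{h}_{19}^{-}$ and $\mathfrak{h}_{26}^{+}$. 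With these constants and the generic metric \eqref{eq:metric}, the Christoffel symbols and then the curvature components are obtained from the formulas recalled above; given the size of the resulting expressions, this is precisely where the symbolic computation with Sage and Mathematica enters.

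For the Chern connection, $\varepsilon=0$, I would argue first through balancedness. By Theorem~\ref{thm:yang-zheng-balanced}, K\"ahler-likeness of $\nabla^{Ch}$ forces the metric to be balanced; since $\mathfrak{h}_{26}^{+}$ carries no balanced metric, that subcase is excluded at once. For $\mathfrak{h}_{19}^{-}$ balanced metrics do exist, so here I would substitute the balanced normal form ($u=z=0$, and either $t^2=1,\ v=0$ or $t^2>0,\ v=1$) into the curvature and exhibit a single nonvanishing Bianchi component $B^0_{i\bar jk\bar\ell}$, contradicting \eqref{eq:bianchi1-v2}.

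For $\varepsilon\neq0$ the plan mirrors the earlier families, Family (Niii) being the most involved. I would first impose the type conditions $R^\varepsilon_{ij\bullet\bullet}=0$, which for this family should reduce to equations of the form $R^\varepsilon_{ij\bullet\bullet}=\varepsilon^2\,(\cdots)=0$ isolating the holomorphic torsion coming from the $\varphi^{13}$ term in $d\varphi^2$ and the twist $\delta$. Since $\delta=\pm1$ is never zero, this should already pin $\varepsilon$ to a short list of candidate values. For each surviving value I would then evaluate two or three well-chosen Bianchi components $B^\varepsilon_{i\bar jk\bar\ell}$ and show that they cannot vanish simultaneously for any admissible metric, that is, for any choice of $r,s,t,u,v,z$ compatible with the positivity constraints of \eqref{eq:metric}. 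As in Family (Ni), it is convenient to record these computations in an appendix table of the form of Appendix~\ref{app:BC-Ni}.

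The hard part will be bookkeeping rather than any conceptual difficulty: Family (Niii) has the most intricate structure equations among the nilpotent cases, with holomorphic, anti-holomorphic, and mixed torsion all present, so the curvature components are long rational expressions in $\varepsilon$ and the six metric parameters, carrying the denominator $8\sqrt{-1}\det\Omega$ from \eqref{det}. The delicate step is to select, among the many vanishing conditions, the minimal set of components whose simultaneous vanishing is manifestly incompatible with positivity of the metric; the persistent nonvanishing of $\delta$ is what keeps such an obstruction alive across the whole family.
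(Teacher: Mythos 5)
Your general setup (the characterization of Remark~\ref{caracterizacionKL}, reading off structure constants, symbolic computation of the curvature) matches the paper's methodology, and your treatment of $\varepsilon=0$ via Theorem~\ref{thm:yang-zheng-balanced} is legitimate: $\mathfrak{h}_{26}^{+}$ admits no balanced metric, and for the balanced metrics on $\mathfrak{h}_{19}^{-}$ one can indeed exhibit a nonvanishing Bianchi component (the paper instead absorbs $\varepsilon=0$ into one uniform argument, but your route would also close that case). The genuine gap is the case $\varepsilon\neq0$, where you give only a plan, and the mechanism you predict is not the one that actually operates. You expect the type conditions $R^\varepsilon_{ij\bullet\bullet}=0$ to take the form $\varepsilon^{2}(\cdots)=0$ with the bracket nonvanishing because $\delta\neq0$, thereby pinning $\varepsilon$ to a short list of candidates \emph{independently of the metric}. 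In the actual computation the vanishing conditions couple $\varepsilon$ to the metric parameters, and the paper's argument runs entirely through Bianchi components: the starting point is
\begin{equation*}
B^{\varepsilon}_{3\bar{3}2\bar{3}}
=\frac{(4\varepsilon^2+4\varepsilon-1)\,(-\sqrt{-1}\,s^2+\delta t^2)\,(s^2t^2-|v|^2)\,v}{16\sqrt{-1}\,\det\Omega},
\end{equation*}
whose vanishing forces \emph{either} the metric condition $v=0$ \emph{or} the irrational values $\varepsilon=\frac{-1\pm\sqrt{2}}{2}$.

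From there the proof is a cascade of case splits that your outline does not anticipate: under $v=0$, the component $B^{\varepsilon}_{3\bar{2}1\bar{2}}$ forces $z\varepsilon=0$; the branch $\varepsilon=0$ dies on $B^{0}_{3\bar{3}2\bar{2}}\neq0$; the branch $z=0$ leaves the surviving candidate $\varepsilon=\frac14$ coming from the factor $(4\varepsilon-1)$ in $B^{\varepsilon}_{2\bar{2}3\bar{3}}$, which must then be killed separately by $B^{\frac14}_{1\bar{1}3\bar{3}}=-\frac{s^2}{2}\neq0$; and the two roots of $4\varepsilon^2+4\varepsilon-1$ require a dedicated verification that $B^{\varepsilon_0}_{3\bar{3}2\bar{2}}$ never vanishes for an admissible metric. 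None of the values $\frac14$, $\frac{-1\pm\sqrt2}{2}$, nor the interleaving of conditions on $v$ and $z$ with conditions on $\varepsilon$, is visible from your proposal, and evaluating ``two or three well-chosen Bianchi components'' at a guessed list of $\varepsilon$'s would miss exactly these survivors. For a proposition of this computational kind, the proof \emph{is} the identification of these specific components and the case analysis they generate; since your proposal neither produces them nor establishes any substitute obstruction, it does not yet prove the statement.
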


\begin{proof}
Consider the Bianchi identity relation $B^{\varepsilon}_{3\bar{3}2\bar{3}}$.
Now,
\begin{eqnarray*}
B^{\varepsilon}_{3\bar{3}2\bar{3}}&=&\frac{(4 \varepsilon^2+4 \varepsilon-1)(-\sqrt{-1} s^2+\delta t^2) (s^2 t^2-|v|^2)v}{16\sqrt{-1}\det\Omega}=0 \\
&\Longleftrightarrow& \begin{cases}
v=0,\\
\varepsilon = \frac{-1\pm  \sqrt 2}{2}.
\end{cases}
\end{eqnarray*}

We start supposing $v=0$.  Now, $B^{\varepsilon}_{3\bar{2}1\bar{2}} = \frac{2 \varepsilon^2 \bar z (\sqrt{-1} s^2 +\delta t^2)s^4 }{8\sqrt{-1}\det\Omega}=0$ if and only
if $z\varepsilon = 0$.  In the case $\varepsilon =0$, one can check that $B^0_{3\bar{3}2\bar{2}}=\frac{-s^4 t^2 (s^2 +\delta \sqrt{-1} t^2)}{16\sqrt{-1}\det\Omega}$ is always a
non-zero element.  On the other hand, if $z=0$, just take
$$
B^{\varepsilon}_{2\bar{2}3\bar{3}}=\frac{s^2t^2(s^2+ \sqrt{-1} \delta t^2)
\left(2\varepsilon s^2+\sqrt{-1}(1-2\varepsilon)\delta t^2\right)(4 \varepsilon-1)}{16\sqrt{-1}\det\Omega}.
$$

One has $B^{\varepsilon}_{2\bar{2}3\bar{3}}=0 \Longleftrightarrow \varepsilon = \frac14$.
But in this case, $B^{\frac14}_{1\bar{1}3\bar{3}}=\frac{-s^2}{2}\neq 0$.

Finally, if $\varepsilon_0=\frac{-1\pm  \sqrt 2}{2}$ is a root of the polynomial $4\varepsilon^2+4\varepsilon-1$, then:
\begin{eqnarray*}
\lefteqn{ B^{\varepsilon_0}_{3\bar{3}2\bar{2}}=0}\\
&\Longleftrightarrow& (s^2t^2-|v|^2)\left((s^2 + \sqrt{-1}\,\delta t^2) \left[(\mp 7 +5\sqrt 2) s^2 \pm \sqrt{-1} t^2 (4\pm 3\sqrt2)\right] \right. \\
&& \left. -\sqrt{-1}\,\delta |v|^2 (\mp 3+2\sqrt 2)\right)=0,
\end{eqnarray*}
but the last expression is always non-zero.
\end{proof}

\subsection{Solvmanifolds}
\subsubsection{Solvmanifolds in Family (Si)}
By Section~\ref{subsec:complex-st}, for this family of complex structures the underlying Lie algebras are $\frak g_1$ or $\frak g_2^{\alpha}$ with $\alpha\geq 0$.

\begin{prop}
The Chern connection $\nabla^{Ch}$ is K\"ahler-like if and only $\omega$ given by \eqref{eq:metric} satisfies $u=v=z=0$.
Moreover, in these cases, the Chern curvature vanishes identically.
\end{prop}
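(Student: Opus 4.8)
The plan is to combine the characterization of K\"ahler-likeness in Remark~\ref{caracterizacionKL} with the balanced obstruction of Theorem~\ref{thm:yang-zheng-balanced}. For the Chern connection the type conditions \eqref{eq:type-v2} are automatic, since $\nabla^{Ch}$ always satisfies \eqref{eq:type}; hence $\nabla^{Ch}$ is K\"ahler-like precisely when all the Bianchi-type quantities $B^0_{i\bar j k\bar\ell}=R^0_{i\bar j k\bar\ell}-R^0_{k\bar j i\bar\ell}$ vanish. I would therefore read off the structure constants of Family~(Si), namely $c_{13}^1=-A$, $c_{23}^2=A$, $c_{1\bar3}^1=-A$, $c_{2\bar3}^2=A$ together with their conjugates, and feed them, along with the generic metric \eqref{eq:metric}, into the Christoffel and curvature formulas for $(R^{\varepsilon,\rho})_{IHKL}$, specialising $(\varepsilon,\rho)=(0,\tfrac12)$ to single out the Chern connection. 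Since these equations depend on the complex structure only through $A=\cos\theta+\sqrt{-1}\sin\theta$, one computation handles $\mathfrak g_1$ ($\theta=0$) and $\mathfrak g_2^{\alpha}$ ($\theta\neq0$) simultaneously.

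For the forward implication I would first apply Theorem~\ref{thm:yang-zheng-balanced}: a K\"ahler-like Chern connection forces $\omega$ to be balanced, and by the balanced characterization for Family~(Si) recalled in Section~\ref{relations} this already gives $v=z=0$. It remains to prove $u=0$. Substituting $v=z=0$ collapses most of the curvature, and I expect that one of the Bianchi-type components mixing the first two $(1,0)$-directions, hence carrying the off-diagonal coefficient $u$, takes the form $(\text{nonzero})\cdot u=0$; since $|A|=1$ and $\det\Omega>0$ by positive-definiteness, the prefactor cannot vanish, so $u=0$.

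For the converse I would set $u=v=z=0$, making the metric diagonal, $2\omega=\sqrt{-1}(r^2\varphi^{1\bar1}+s^2\varphi^{2\bar2}+t^2\varphi^{3\bar3})$, and compute the entire Chern curvature tensor. The assertion is the stronger one that $R^0\equiv0$, i.e.\ that $\omega$ is Chern-flat; flatness trivially entails both \eqref{eq:bianchi1} and \eqref{eq:type}, hence K\"ahler-likeness. With the metric diagonal and every structure constant a multiple of $A$ or $\bar A$ with $A\bar A=1$, the (constant) Chern Christoffel symbols combine so that the quadratic terms in the curvature formula cancel against the bracket contribution $c_{IH}^{B}(\Gamma^{0,\frac12})^{A}_{BK}$, yielding $R^0\equiv0$.

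The main obstacle is the computational bookkeeping rather than any conceptual difficulty: one must isolate, out of the generic (balanced) Chern curvature, the single $B^0$-component that detects $u$, and then check that every component of $R^0$ vanishes in the diagonal case. These are finite but lengthy verifications, precisely the kind delegated to the symbolic computation mentioned in the text; the conceptual content is exhausted by the two reductions above, namely that the type condition is free for Chern and that balancedness already yields $v=z=0$.
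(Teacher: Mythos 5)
Your proposal is correct and follows essentially the same route as the paper: the balanced obstruction of Theorem~\ref{thm:yang-zheng-balanced} together with the balanced characterization $v=z=0$ for Family~(Si) (which the paper cites from \cite[Theorem 4.5]{fino-otal-ugarte}), then the verification that the Bianchi-type components $B^0$ vanish precisely when $u=0$, and finally Chern-flatness of the diagonal metrics for the converse. The components you anticipate of the form $(\text{nonzero})\cdot u$ are exactly what the computation yields, e.g.\ $B^{0}_{1\bar{3}3\bar{2}}=\frac{-2\sqrt{-1}\,r^2 s^2 u}{r^2s^2-|u|^2}$ as recorded in Appendix~\ref{sec:app:g2a-Chern}.
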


\begin{proof}
Using \cite[Theorem 1.3]{yang-zheng}, a necessary condition for $\nabla^{Ch}$ to be K\"ahler-like is that $\omega$ must be a balanced metric.
According to \cite[Theorem 4.5]{fino-otal-ugarte}, $\omega$ is balanced if and only if $v=z=0$.
Now, with this restriction, it is immediate to see that $B^0\equiv 0$ if and only if $u=0$ (see Appendix \ref{sec:app:g2a-Chern}).
The last statement follows by direct inspection.
\end{proof}

\begin{rmk}\label{rmk:g12}
The previous result gives an example of a non-K\"ahler metric with K\"ahler-like Chern connection on a solvmanifold endowed with a left-invariant complex structure which does not admit any basis of (left-invariant) holomorphic vector fields.

Note  indeed  that  the  above  diagonal  metrics  are  K\"ahler if
and only if the  parameter  satisfies $A=\sqrt{-1}$, corresponding to the Lie algebra $\mathfrak g^0_2$.
Clearly, by \cite{boothby}, the complex solvmanifold is in fact biholomorphic to a holomorphically-parallelizable manifold. But it is presented in a different way as quotient of a Lie group, and as such it is not parallelizable by left-invariant holomorphic vector fields.
\end{rmk}

\begin{prop}
For $\varepsilon\neq 0$, the Gauduchon connection
$\nabla^{\varepsilon}$ is K\"ahler-like if and only if $\frak g= \frak g_2^0$ and $\omega$ is K\"ahler.
\end{prop}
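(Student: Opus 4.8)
The plan is to establish both implications, with the forward direction carrying essentially all of the computational weight. For the backward implication, suppose $\mathfrak{g}=\mathfrak{g}_2^0$ and $\omega$ is K\"ahler. Then $d\omega=0$, so the torsion tensors $T=Jd\omega$ and $C=d\omega(J\_,\_,\_)$ vanish identically, whence $\nabla^{\varepsilon}=\nabla^{LC}=\nabla^{Ch}$ for every $\varepsilon$; since the metric is K\"ahler, this common connection satisfies both \eqref{eq:bianchi1} and \eqref{eq:type}, hence is K\"ahler-like. This direction is immediate and independent of $\varepsilon$.

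For the forward implication, assume $\nabla^{\varepsilon}$ is K\"ahler-like for a fixed $\varepsilon\neq0$. By Remark~\ref{caracterizacionKL} this amounts to $R^{\varepsilon}_{ij\bullet\bullet}=R^{\varepsilon}_{\bullet\bullet k\ell}=0$ together with $B^{\varepsilon}_{i\bar j k\bar\ell}=0$. Working in the coframe of Family (Si) with $A=\cos\theta+\sqrt{-1}\sin\theta$ and the generic metric \eqref{eq:metric}, I would compute the Christoffel symbols of $\nabla^{\varepsilon}=\nabla^{\varepsilon,\frac12-\varepsilon}$ and then the $(4,0)$-curvature through the metric formulas. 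The crucial structural observation is that, since $\rho=\frac12-\varepsilon$, the Christoffel symbols are affine in $\varepsilon$ and each curvature component is a quadratic polynomial in $\varepsilon$ over the common positive denominator $8\sqrt{-1}\det\Omega>0$ as in \eqref{det}. Moreover, the type conditions $R^{\varepsilon}_{ij\bullet\bullet}=0$ are genuinely \emph{new} constraints here, because, unlike the Chern connection, $\nabla^{\varepsilon}$ does not automatically satisfy \eqref{eq:type} once $\varepsilon\neq0$.

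The narrowing of parameters then proceeds in stages. First I would exploit selected type-condition components $R^{\varepsilon}_{ij\bullet\bullet}$: extracting their leading $\varepsilon^{2}$-coefficients I expect factors proportional to $\bar v$ and $\bar z$ (times nonvanishing quantities), which for $\varepsilon\neq0$ force $v=z=0$, recovering the balanced locus of \cite[Theorem 4.5]{fino-otal-ugarte}. Next, on $\{v=z=0\}$ I would turn to the Bianchi quantities $B^{\varepsilon}$ and isolate a component whose $\varepsilon$-dependent part is $u$ times a positive factor, forcing $u=0$ and reducing $\omega$ to a diagonal metric (so that, by the preceding Chern computation and Remark~\ref{rmk:g12}, the Chern connection is already flat). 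Finally, with $u=v=z=0$ the residual conditions become polynomial identities in $\varepsilon$ and $\theta$ alone; separating the part proportional to $\varepsilon\neq0$ should force $\cos\theta=0$, equivalently $\alpha=\cos\theta/\sin\theta=0$ and $A=\sqrt{-1}$, singling out $\mathfrak{g}_2^0$. For this algebra the diagonal metric is K\"ahler by Remark~\ref{rmk:g12}, which closes the argument.

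The main obstacle is the symbolic bookkeeping: the curvature of the general Gauduchon connection over the full nine-real-parameter family (the six metric parameters together with $\theta$) consists of intricate quadratic polynomials in $\varepsilon$, so the art lies in selecting the few components that factor cleanly, each isolating exactly one of the successive constraints $v=0$, $z=0$, $u=0$, $\cos\theta=0$, with the positive denominator $\det\Omega$ and the positivity of the metric parameters guaranteeing that the remaining factors never vanish. This factoring is carried out with Sage and Mathematica; the conceptual content is simply that every $\varepsilon$-proportional deviation from the Chern case must vanish, and these deviations are precisely what pin down both the diagonal metric and the distinguished value $A=\sqrt{-1}$.
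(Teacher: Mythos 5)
Your backward implication is correct, and your overall framework (the characterization in Remark~\ref{caracterizacionKL}, curvature of $\nabla^{\varepsilon}$ on the generic metric \eqref{eq:metric}, factoring selected components over the positive denominator $8\sqrt{-1}\,\det\Omega$ of \eqref{det}) matches the paper's. The gap is in the logical device driving your forward implication: you cannot ``extract leading $\varepsilon^{2}$-coefficients''. The hypothesis is that $\nabla^{\varepsilon}$ is K\"ahler-like for \emph{one fixed} $\varepsilon\neq 0$, so all you know is that each curvature component --- a quadratic polynomial in $\varepsilon$ with no constant term --- vanishes at that single value; this does not imply that its top coefficient vanishes. The extraction is legitimate only for components that are exact monomials in $\varepsilon$ times a factor of definite sign, and identifying such components is precisely where the work lies. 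The paper's proof hinges on one such component, the Bianchi quantity $B^{\varepsilon}_{1\bar{1}2\bar{2}} = \frac{8\varepsilon^{2}(r^{2}s^{2}-|u|^{2})}{8\sqrt{-1}\,\det\Omega}\left[(\Re A)^{2}(r^{2}s^{2}-|u|^{2})+|A|^{2}|u|^{2}\right]$, which is $\varepsilon^{2}$ times a sum of nonnegative terms (recall $|A|=1$ and $r^{2}s^{2}>|u|^{2}$); its vanishing at a single $\varepsilon\neq0$ forces $\Re A=0$ (i.e.\ $\mathfrak{g}=\mathfrak{g}_{2}^{0}$) \emph{and} $u=0$ simultaneously. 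Note this is the opposite order to yours: the paper settles $A$ and $u$ first, via a Bianchi quantity, before touching $v$ and $z$.

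Second, and this is where your reduction would actually break: you never treat $\varepsilon=\frac12$. After reducing to $\mathfrak{g}_{2}^{0}$ and $u=0$, the components with the clean monomial structure carry a factor $(2\varepsilon-1)$ --- e.g.\ $B^{\varepsilon}_{1\bar{1}3\bar{3}}\propto\varepsilon(2\varepsilon-1)|z|^{2}$, $B^{\varepsilon}_{2\bar{2}3\bar{3}}\propto\varepsilon(2\varepsilon-1)|v|^{2}$, and likewise $R^{\varepsilon}_{131\bar{3}}\propto\varepsilon(2\varepsilon-1)z^{2}$ (see Appendix~\ref{sec:app:g2a}) --- so they vanish identically at the Strominger-Bismut value regardless of the metric: your ``nonvanishing quantities'' are not nonvanishing there, and one only obtains $(2\varepsilon-1)v=(2\varepsilon-1)z=0$. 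This matters because $\mathfrak{g}_{2}^{0}$ carries pluriclosed non-K\"ahler metrics ($u=0$, $(v,z)\neq(0,0)$) which must be excluded also for $\varepsilon=\frac12$, since the proposition covers that value. The paper handles this with a dedicated case: $B^{\frac12}_{1\bar{3}3\bar{3}}=\frac{\sqrt{-1}z}{2}$ and $B^{\frac12}_{2\bar{3}3\bar{3}}=\frac{\sqrt{-1}v}{2}$, whose vanishing forces $v=z=0$; here the nonvanishing of the relevant bracket comes from positivity of the metric, not from a sum-of-squares structure. Until you exhibit, at every fixed $\varepsilon\neq0$ including $\frac12$, components with the required factorization and sign properties, your chain $v=z=0\Rightarrow u=0\Rightarrow\cos\theta=0$ is a plan rather than a proof.
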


\begin{proof}
  We prove firstly that the underlying Lie algebra must be $\frak g_2^0$, corresponding to $A=\sqrt{-1}$ in family (Si), as:
  $$B^{\varepsilon}_{1\bar{1}2\bar{2}} = \frac{8\,\varepsilon^2(r^2s^2-|u|^2)}{8\sqrt{-1}\, \det\Omega} \left[(\Re A)^2 (r^2s^2-|u|^2) + |A|^2|u|^2\right]=0,$$
  implies $A=\sqrt{-1}$ and $u=0$.

  Now, as it is shown in Appendix \ref{sec:app:g2a}, $B^{\varepsilon}_{1\bar{1}3\bar{3}} = B^{\varepsilon}_{2\bar{2}3\bar{3}} = 0$
  if and only if $(2\varepsilon-1)v = (2\varepsilon-1)z=0$.

  If $v=z=0$, in particular the metric $\omega$ is K\"ahler and one has that $B^{\varepsilon}\equiv 0$ and $R^{\varepsilon}\equiv 0$.

  On the other hand, consider now the Strominger-Bismut connection, {\itshape i.e.} $\varepsilon = \frac12$.
  Then, $B^{\frac12}\equiv 0$ if and only if
  $(B^{\frac12}_{1\bar{3}3\bar{3}}, B^{\frac12}_{2\bar{3}3\bar{3}}) = (\frac{\sqrt{-1}z}{2}, \frac{\sqrt{-1}v}{2}) = (0,0)$,
  which implies $v=z=0$.
\end{proof}

\subsubsection{Solvmanifolds in Family (Sii)}
By Section~\ref{subsec:complex-st}, the Lie algebra underlying this family of complex structures is $\frak g_3$.

\begin{prop}
For any solvmanifold in Family (Sii), the Gauduchon connection $\nabla^{\varepsilon}$ is never K\"ahler-like.
\end{prop}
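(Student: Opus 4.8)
The plan is to follow the same scheme as in the preceding families and reduce the K\"ahler-like condition to an explicit finite system of polynomial identities via Remark~\ref{caracterizacionKL}. Concretely, for the complex structure (Sii) with parameter $x>0$ and a generic invariant Hermitian metric as in \eqref{eq:metric}, I would first compute the Christoffel symbols $(\Gamma^{\varepsilon,\rho})^{K}_{IH}$ along the Gauduchon line $\varepsilon+\rho=\frac12$ from the metric formulas, and then the $(4,0)$-curvature components $R^{\varepsilon}_{IHKL}$. By Remark~\ref{caracterizacionKL}, $\nabla^{\varepsilon}$ is K\"ahler-like if and only if the type identities $R^{\varepsilon}_{ij\bullet\bullet}=R^{\varepsilon}_{\bullet\bullet k\ell}=0$ and the Bianchi-type identities $B^{\varepsilon}_{i\bar jk\bar\ell}=0$ hold for all $i,j,k,\ell\in\{1,2,3\}$. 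Thus the whole argument reduces to exhibiting, for every $\varepsilon\in\mathbb R$, one such identity that is violated.

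I would split the discussion into the Chern case $\varepsilon=0$ and the case $\varepsilon\neq 0$. For $\varepsilon=0$, note that the shortcut used in Family (Nii)---namely that no balanced metric exists---is \emph{not} available here, since $\mathfrak{g}_3$ in Family (Sii) does admit balanced metrics. Instead, by Theorem~\ref{thm:yang-zheng-balanced} the K\"ahler-like condition forces the metric to be balanced, which for $\mathfrak{g}_3$ means $v=z=0$ (from the classification of balanced metrics recalled above, see \cite{fino-otal-ugarte}). Under this restriction I expect a single Bianchi component $B^{0}_{i\bar jk\bar\ell}$, with denominator $8\sqrt{-1}\,\det\Omega$ as in \eqref{det} and numerator a nonzero multiple of a power of $t^2$ times a factor involving $x$ (for instance $1+4x^2$), to be nonvanishing; this rules out the Chern connection.

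For $\varepsilon\neq 0$, the strategy is to exploit the type conditions $R^{\varepsilon}_{ij\bullet\bullet}=0$ first: these are typically proportional to $\varepsilon^2$ (or $\varepsilon$) times polynomials in $x$ and in $u,v,z$, so for $\varepsilon\neq 0$ they impose genuine constraints, most likely forcing some of $v,z$ to vanish or a relation between $u$ and $x$. I would then feed the surviving constraints into a small number of Bianchi components $B^{\varepsilon}_{i\bar jk\bar\ell}$. Because the structure constants of (Sii) carry the factors $\tfrac12\pm\sqrt{-1}\,x$ and $\tfrac{\sqrt{-1}}{4x}$, the resulting numerators are quadratic in $\varepsilon$ with $x$-dependent coefficients; the key point to establish is that the $\varepsilon$-polynomials coming from two distinct components share no common real root compatible with $x>0$ and with the positivity constraints listed below \eqref{eq:metric}. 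Matching these---showing that any $\varepsilon$ annihilating one chosen component makes another strictly nonzero---produces the contradiction for all $\varepsilon\neq 0$, in particular at $\varepsilon=\tfrac12$ (Strominger--Bismut), consistently with the fact that $\mathfrak{g}_3$ carries no pluriclosed metric.

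The main obstacle I anticipate is purely computational bookkeeping: the connection, and hence the curvature, depends on all six metric parameters $r^2,s^2,t^2,u,v,z$, on the structure parameter $x$, and on $\varepsilon$, so the delicate part is organizing the case split (on $\varepsilon$, and on which of $u,v,z$ the type conditions kill) so that a fixed, minimal set of components $R^{\varepsilon}_{ij\bullet\bullet}$ and $B^{\varepsilon}_{i\bar jk\bar\ell}$ suffices to close every branch. This is exactly where the symbolic computation in Sage \cite{sage} and Mathematica does the heavy lifting, and the nonvanishing of each selected numerator must be checked against $x>0$ and the positivity constraints on the metric. Once such a minimal set is isolated, the contradictions are immediate, precisely as in Families (Nii) and (Niii).
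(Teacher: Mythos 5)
Your outline reproduces the paper's strategy exactly: reduce via Remark~\ref{caracterizacionKL}, split into $\varepsilon=0$ and $\varepsilon\neq 0$, handle $\varepsilon=0$ by combining Theorem~\ref{thm:yang-zheng-balanced} with the balanced classification for $\mathfrak{g}_3$ and one nonvanishing Bianchi component (your guess of the factor $1+4x^2$ is in fact correct: the paper uses $B^{0}_{1\bar 2 2\bar 1}=\frac{-(t^2-2\sqrt{-1}s^2x)(1+4x^2)}{32x^2}\neq 0$), and handle $\varepsilon\neq 0$ by first exploiting type conditions, which are indeed divisible by $\varepsilon$, and then closing with further components. So there is no divergence of method.

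The issue is that for $\varepsilon\neq 0$ your proposal stops where the actual proof begins. You anticipate that the type conditions will ``most likely force some of $v,z$ to vanish or a relation between $u$ and $x$,'' and that two components will ``share no common real root,'' but the concrete case structure is the entire content of the argument, and it does not come out the way you guess: in the paper, setting $R^{\varepsilon}_{232\bar 3}=R^{\varepsilon}_{233\bar 3}=0$ for a \emph{generic} metric (no balanced reduction is made here) and adding/subtracting yields a factored system of the form $AB=0$, $CD=0$, whose only admissible solutions are $s^2=t^2$ together with either $\varepsilon=\tfrac12$ or $x=\tfrac12$ --- i.e.\ relations among the metric parameters, $x$, and $\varepsilon$, with $v$ expressed in terms of them rather than killed. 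Each of these two branches is then eliminated by a single further component ($R^{\frac12}_{121\bar 2}\neq 0$ in the first, $B^{\varepsilon}_{2\bar 2 3\bar 1}\neq 0$ in the second). Until you exhibit such a closed case analysis --- which components you use, what constraints they impose, and why every branch dies --- the proposal is a correct plan rather than a proof; for a statement of this computational nature, the verification is not a detail that can be deferred but the proof itself. A minor point to be careful about: the balanced condition you quote ($v=z=0$) matches the paper's survey of special metrics, while its proof of this proposition instead imposes $u=z=0$ citing \cite[Theorem 4.5]{fino-otal-ugarte}; whichever is correct, the specific nonvanishing component you then select must be checked against that reduction.
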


\begin{proof}
First let us suppose $\varepsilon=0$. Using \cite[Theorem 1.3]{yang-zheng}, a necessary condition for $\nabla^{Ch}$ to be K\"ahler-like is that $\omega$ must be a balanced metric.  According to \cite[Theorem 4.5]{fino-otal-ugarte}, $\omega$ is balanced if and only if $u=z=0$.
Now, with this restriction,
$$
B^0_{1\bar{2}2\bar{1}} = \frac{-(t^2 - 2 \sqrt{-1} s^2 x)(1+4x^2)}{32 x^2}\neq 0,
$$
whence the statement.

In what follows, we will suppose that $\varepsilon\neq 0$ and a generic Hermitian metric $\omega$.
Consider the elements $R^{\varepsilon}_{232\bar{3}}$ and $R^{\varepsilon}_{233\bar{3}}$:
$$
\begin{cases}
R^{\varepsilon}_{232\bar{3}}=\frac{\varepsilon (\sqrt{-1} + 2 x) (s^2 t^2 - |v|^2) (-v^2 + 2 \sqrt{-1} s^2 t^2 x -
   4 s^4 x^2 + 2 \varepsilon (t^4 + 4 s^4 x^2) - 2 \sqrt{-1} |v|^2 x)}{64x\sqrt{-1}\det\Omega},\\[5pt]
R^{\varepsilon}_{233\bar{3}}=\frac{\varepsilon (\sqrt{-1} + 2 x) (s^2 t^2 -
   |v|^2) (v (\sqrt{-1} (-1 + 2 \varepsilon) t^2 + 4 \varepsilon s^2 x) +
   4 \sqrt{-1} x (\sqrt{-1} \varepsilon t^2 - s^2 x + 2 \varepsilon s^2 x) \bar v)}{64x\sqrt{-1}\det\Omega}.
\end{cases}
$$

Now:
$$
\begin{cases}
R^{\varepsilon}_{232\bar{3}}=0\Longleftrightarrow 2\varepsilon t^4 - v^2 + 2\sqrt{-1}x (s^2t^2 - |v|^2) + 4s^4x^2(2\varepsilon-1)=0,\\[5pt]
R^{\varepsilon}_{233\bar{3}}=0\Longleftrightarrow v\left(4\sqrt{-1}\,\varepsilon s^2 x - (2\varepsilon-1) t^2\right) - 4x\bar v\left(\sqrt{-1}\,\varepsilon t^2 + (2\varepsilon - 1)s^2 x\right)=0.
\end{cases}
$$
Adding and subtracting the two expressions above, we obtain the equivalent system:
$$
\begin{cases}
(2\sqrt{-1}\,s^2x + 2\varepsilon (t^2-2\sqrt{-1}\,s^2 x) - v)(t^2+2\sqrt{-1}\,s^2 x + v + 2\sqrt{-1}\,x\bar v)=0,\\[5pt]
(2\sqrt{-1}\,s^2x + 2\varepsilon (t^2-2\sqrt{-1}\,s^2 x) + v)(t^2+2\sqrt{-1}\,s^2 x - v - 2\sqrt{-1}\,x\bar v)=0.
\end{cases}
$$
As a matter of notation, let us write the first equation as $AB=0$ and the second as $CD=0$.

Let us focus our attention in the first equation and consider the case $A=0$.  We can express $v$ as:
$$v = 2\sqrt{-1}\,s^2x + 2\varepsilon (t^2-2\sqrt{-1}\,s^2 x).$$
Substituting this value in the second equation we obtain that:
$$
\begin{cases}
C=4(\varepsilon t^2 - \sqrt{-1}s^2 x (2\varepsilon -1))\neq 0,\\[5pt]
D = (1-2\varepsilon) (t^2-4s^2x^2) + 4\sqrt{-1}\,\varepsilon x (s^2-t^2)=0\Longleftrightarrow \begin{cases}
\Re D=0,\\ \Im D=0,
\end{cases}
\end{cases}
$$
Now,
$$
\begin{cases}
\Re D=0,\\ \Im D=0,
\end{cases}\Longleftrightarrow \begin{cases}
(1-2\varepsilon) (t^2-4s^2x^2)=0,\\ s^2-t^2=0,
\end{cases} \Longleftrightarrow \begin{cases}
(1-2\varepsilon) (1-4x^2)=0,\\ s^2-t^2=0,
\end{cases}
$$
so, we obtain two different possibilities: $$s^2 = t^2,\quad \varepsilon = \frac12,\quad \text{or} \quad s^2 = t^2,\quad x = \frac12.$$

If we consider $A\neq 0$, then $B=0$ and therefore $v + 2\sqrt{-1}\,x\bar v = - t^2-2\sqrt{-1}\,s^2 x$.  Substituting this expression in $D$ we get that $D=2(t^2+2\sqrt{-1}x^2x)\neq0$, which implies that $C=0$.  Observe that system $B=C=0$ is the same as $A=D=0$ just changing the sign of $v$.  One get the same solutions as before.

To finish the proof, we study first the case $s^2 = t^2$ and $\varepsilon = \frac12$.
Directly, $$R^{\frac12}_{121\bar{2}} = \frac{t^2(2x-\sqrt{-1})}{16x}\neq 0.$$  On the other hand, if $s^2 = t^2$ and $x = \frac12$,
$B^{\varepsilon}_{2\bar{2}3\bar{1}} =\frac{\sqrt{-1}\varepsilon(2\varepsilon-1)t^4}{\bar u+\sqrt{-1}\bar z}\neq 0$.
\end{proof}

\subsubsection{Solvmanifolds in Families (Siii1), (Siii3), (Siii4)}
Recall that from Section~\ref{subsec:complex-st}, the Lie algebras underlying (Siii1), (Siii3), and (Siii4) are, respectively, $\frak g_4$, $\frak g_6$, and $\frak g_7$.
In order to give an unified argument, we will gather the complex equations as follows:
$$d\varphi^1=\sqrt{-1}\,(\varphi^{13} + \varphi^{1\bar3}),\quad d\varphi^2 = -\sqrt{-1}\,(\varphi^{23} + \varphi^{2\bar3}),$$
$$ 	d\varphi^3 = x\,\varphi^{1\bar1} +y\,\varphi^{2\bar 2},$$
where $(x,y) = (\pm 1, 0)$ for $\frak g_4$,\, $(x,y) = (1,1)$ for $\frak g_6$ and $(x,y=-x) = (\pm 1, \mp1)$ for $\frak g_7$.
In particular, $x\neq 0$.

\begin{prop}
The Chern connection $\nabla^{Ch}$ is never K\"ahler-like.
\end{prop}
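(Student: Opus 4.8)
The plan is to reuse the strategy that settled Family~(Sii): combine Yang and Zheng's balanced obstruction with the classification of invariant balanced metrics. By Theorem~\ref{thm:yang-zheng-balanced}, if $\nabla^{Ch}$ is K\"ahler-like then $\omega$ is balanced, so it suffices to show that no invariant balanced metric of the form \eqref{eq:metric} can make $B^0\equiv 0$ on any of the three underlying Lie algebras $\mathfrak g_4$ (Siii1), $\mathfrak g_6$ (Siii3), $\mathfrak g_7$ (Siii4).

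For $\mathfrak g_4$ ($y=0$) and $\mathfrak g_6$ ($y=x$) the argument ends immediately: as recalled above from \cite[Theorem~4.5]{fino-otal-ugarte}, neither algebra admits any invariant balanced metric, so Theorem~\ref{thm:yang-zheng-balanced} already rules out a K\"ahler-like Chern connection, for either sign of $x$. The only substantial case is $\mathfrak g_7$ ($y=-x$), which does carry balanced metrics: by \cite[Theorem~4.5]{fino-otal-ugarte} these are precisely the ones in \eqref{eq:metric} with $v=z=0$ and $r^2=s^2$.

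I would therefore impose $v=z=0$, $r^2=s^2$, and $(x,y)=(\pm1,\mp1)$ in the Christoffel symbols of $\nabla^{0,\sfrac{1}{2}}=\nabla^{Ch}$ and in the curvature formula for $R^0$, and then exhibit a single Bianchi-type component $B^0_{i\bar j k\bar\ell}=R^0_{i\bar j k\bar\ell}-R^0_{k\bar j i\bar\ell}$ that remains nonzero on this locus, contradicting \eqref{eq:bianchi1-v2}. A natural candidate is a diagonal-type component such as $B^0_{1\bar 1 2\bar 2}$, and the goal is to show that it equals a nonzero rational expression in the positive metric parameters $r^2=s^2$, $t^2$, and $u$, in the spirit of the value $B^0_{1\bar 2 2\bar 1}\neq 0$ obtained for Family~(Sii). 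Since the curvature is quadratic in the structure constants and $x^2=y^2=1$, $xy=-1$ for both admissible sign choices, one expression covers both. The only real obstacle is this last symbolic computation: one must check that, even after the full balanced constraint $v=z=0$, $r^2=s^2$ is imposed, the chosen component cannot vanish for parameters satisfying the positivity conditions below \eqref{eq:metric}; as elsewhere in the section, this is done with the symbolic software, and the content of the proof is the resulting nonvanishing expression.
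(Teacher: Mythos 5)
Your proposal follows exactly the paper's own route: Yang--Zheng's balanced obstruction disposes of $\mathfrak g_4$ and $\mathfrak g_6$ outright (no invariant balanced metrics exist there), and on $\mathfrak g_7$ the balanced locus $v=z=0$, $r^2=s^2$ is ruled out by exhibiting a nonvanishing Bianchi-type component. The paper's computation confirms precisely your candidate: $B^0_{1\bar 1 2\bar 2}=\frac{t^2}{2}\neq 0$.
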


\begin{proof}
Using \cite[Theorem 1.3]{yang-zheng}, a necessary condition for $\nabla^{Ch}$ to be K\"ahler-like is that $\omega$ must be a balanced metric.  A direct computation shows that $\omega$ is balanced if and only if $v=z=0$ and $s^2 x + r^2 y = 0$.  Observe that this last equation has no solutions on $\frak g_4$ and $\frak g_6$, which implies that $\nabla^{Ch}$ is never K\"ahler-like for these algebras.  On $\frak g_7$, since $x=-y$, the only option is $r^2 = s^2$.  Now, if we compute the Bianchi identity-symmetries $B^0$, we get that $B^0_{1\bar{1}2\bar{2}} = \frac{t^2}{2}\neq 0$.
\end{proof}

\begin{prop}\label{prop-g467}
In the notation as above, consider $\varepsilon\neq 0$.  Then, $\nabla^{\varepsilon}$ is K\"ahler-like if and only if $\frak g$ is isomorphic
to $\frak g_4$, $\varepsilon=\frac12$, and $u=v=z=0$ in \eqref{eq:metric}.
In this case, the Strominger-Bismut connection is non-flat.
\end{prop}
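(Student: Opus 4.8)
The plan is to invoke Remark~\ref{caracterizacionKL}, reducing the assertion ``$\nabla^\varepsilon$ is K\"ahler-like'' to the vanishing of the type components $R^\varepsilon_{ij\bullet\bullet}=R^\varepsilon_{\bullet\bullet k\ell}=0$ together with all the Bianchi differences $B^\varepsilon_{i\bar j k\bar\ell}=0$. Since the three algebras are packaged into a single set of structure equations with parameters $(x,y)$, $x\neq 0$, I would keep $(x,y)$ free throughout and only impose the admissible values at the very end, so that the condition $y=0$ is exactly what singles out $\mathfrak g_4$ among $\mathfrak g_4,\mathfrak g_6,\mathfrak g_7$. Concretely I would first assemble the Christoffel symbols $(\Gamma^\varepsilon)^K_{IH}$ from the metric formulas of the previous section, then the $(4,0)$-curvature $R^\varepsilon_{IHKL}$, as symbolic expressions in $\varepsilon$, $x$, $y$ and the metric coefficients $r^2,s^2,t^2,u,v,z$, checking first whether the type conditions $R^\varepsilon_{ij\bullet\bullet},R^\varepsilon_{\bullet\bullet k\ell}$ are automatic or already constrain the parameters.

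For the ``only if'' direction I would proceed by a cascade of constraints, exactly in the spirit of the Family~(Si) argument. First I would exhibit a pair of diagonal components, presumably $B^\varepsilon_{1\bar1 3\bar3}$ and $B^\varepsilon_{2\bar2 3\bar3}$, whose vanishing factorizes as $\varepsilon$ (or $2\varepsilon-1$) times an expression that cannot vanish on the positivity locus \eqref{det}; since $\varepsilon\neq 0$, this forces $\varepsilon=\tfrac12$. Next, with $\varepsilon=\tfrac12$ fixed, I would isolate a single component whose vanishing forces $y=0$, thereby excluding $\mathfrak g_6$ and $\mathfrak g_7$ (for which $y\neq 0$) and pinning the algebra to $\mathfrak g_4$. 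Finally, again with $\varepsilon=\tfrac12$ and $y=0$, I would read off from the remaining off-diagonal Bianchi elements — I expect $B^{1/2}_{1\bar3 3\bar3}$, $B^{1/2}_{2\bar3 3\bar3}$ and a $B^{1/2}_{1\bar2\,\cdot\,\cdot}$ term — the conditions $u=v=z=0$, mirroring the Strominger-Bismut computation that closes Family~(Si).

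The ``if'' direction is then a direct verification: substituting $(x,y)=(\pm 1,0)$, $\varepsilon=\tfrac12$, $u=v=z=0$ into the symbolic curvature, I would confirm that every $R^{1/2}_{ij\bullet\bullet}$, $R^{1/2}_{\bullet\bullet k\ell}$ and $B^{1/2}_{i\bar j k\bar\ell}$ vanishes identically in $r^2,s^2,t^2$. For the last assertion, that the Strominger-Bismut connection is non-flat in this case, I would list the surviving curvature components $R^+_{i\bar j k\bar\ell}$ for $\mathfrak g_4$ with $u=v=z=0$ and exhibit one that is manifestly nonzero (most plausibly an $R^+_{1\bar1 1\bar1}$- or $R^+_{3\bar3\,\cdot\,\cdot}$-type term proportional to a positive power of $t^2$), in the same style as the remark following the Family~(Ni) proposition.

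I expect the main obstacle to be organizational rather than conceptual: the symbolic curvature for this non-nilpotent family is sizeable, and the difficulty lies in choosing, at each stage of the cascade, a \emph{single} component whose factorization cleanly separates the variable to be eliminated — first $\varepsilon-\tfrac12$, then $y$, then $u,v,z$ — without opening spurious branches. Particular care is needed with the sign ambiguities $x=\pm 1$ for $\mathfrak g_4$ and $(x,y)=(\pm1,\mp1)$ for $\mathfrak g_7$, and with verifying that every degenerate sub-branch, in which a would-be simplifying factor vanishes, is ruled out by the positivity condition \eqref{det}, just as the ``providing a contradiction'' steps do in the earlier families.
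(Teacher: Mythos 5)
Your overall framework is the same as the paper's: reduce to Remark~\ref{caracterizacionKL}, compute the curvature symbolically for the unified structure equations keeping $(x,y)$ free so that $y=0$ singles out $\frak g_4$ at the end, use the positivity condition behind \eqref{det} to kill degenerate branches, verify the ``if'' direction directly, and exhibit a nonzero component for non-flatness (the paper uses exactly $R^{\frac12}_{1\bar11\bar1}=t^2$, as you predict). The gap is in the concrete cascade, which is where all the content lies. You propose to eliminate $\varepsilon$ \emph{first}, via $B^\varepsilon_{1\bar13\bar3}$ and $B^\varepsilon_{2\bar23\bar3}$, on the claim that these factor as $\varepsilon$ (or $2\varepsilon-1$) times an expression that is nowhere zero on the positivity locus. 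That claim is not borne out: in these Lie-algebra computations such Bianchi components are entangled with the off-diagonal metric coefficients and typically carry factors like $|z|^2$, $|v|^2$, $\bar u$, so their vanishing produces disjunctions rather than pinning $\varepsilon$. Compare Appendix~\ref{sec:app:g2a} for the neighbouring Family (Si), where $B^\varepsilon_{1\bar13\bar3}\propto\varepsilon(2\varepsilon-1)|z|^2$ and $B^\varepsilon_{2\bar23\bar3}\propto\varepsilon(2\varepsilon-1)|v|^2$, so that their vanishing only gives $(2\varepsilon-1)v=(2\varepsilon-1)z=0$. Moreover, in the present family the conditions that do survive on diagonal metrics (and which must be nonzero for $\frak g_6$, $\frak g_7$, since those algebras are never K\"ahler-like) are \emph{type} components, e.g. $R^\varepsilon_{133\bar1}\propto\varepsilon(2\varepsilon-1)s^2t^4x^2$ when $u=v=z=0$; the type conditions \eqref{eq:type-v2} are not ``automatic'' here but are the workhorse of the whole argument.

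Consequently the elimination has to run in essentially the opposite order, and with genuine branching that cannot be avoided. The paper first attacks $v,z$ through $R^\varepsilon_{121\bar3}$ and $R^\varepsilon_{122\bar3}$: in the branch $vz\neq0$ it takes the linear combinations $A(1\pm2\varepsilon)+B(1\mp2\varepsilon)$ of the two resulting equations, collapsing them to $\sqrt{-1}uv+s^2z=0$ and $y\,(\sqrt{-1}r^2v+\bar uz)=0$; substituting $u=\sqrt{-1}s^2z/v$ into \eqref{det} rules out the second factor, forcing $y=0$, and then $R^\varepsilon_{123\bar2}\neq0$ closes that branch. The branches $z=0\neq v$, $v=0\neq z$, and $v=z=0$ are each closed by different components ($R^\varepsilon_{121\bar2}$, $R^\varepsilon_{231\bar1}$, $R^\varepsilon_{132\bar2}$, $R^{\frac12}_{121\bar1}$, $R^{\frac12}_{133\bar3}$, $R^\varepsilon_{131\bar3}$, $R^\varepsilon_{133\bar1}$, \dots), yielding $u=0$, then $\varepsilon=\tfrac12$, and only at the very end $y=0$, hence $\frak g_4$. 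No single component separates $\varepsilon$, nor $y$, uniformly over all metrics --- which is precisely what your plan presupposes. The case analysis on the vanishing of $v$, $z$, $u$ is not an organizational nuisance to be engineered away; it is the core of the proof, and your proposal as written does not contain it.
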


\begin{proof}
From now on, consider $\varepsilon\neq 0$.  Let us focus on Chern-symmetries $R^{\varepsilon}_{121\bar{3}}$ and $R^{\varepsilon}_{122\bar{3}}$:
\begin{eqnarray*}
R^{\varepsilon}_{121\bar{3}} &=& \frac{-\varepsilon\, z}{8\sqrt{-1}\, \det\Omega}\, \left(v\,\left[\sqrt{-1}\,(2\varepsilon + 1) uv x + z \left(s^2 x (2\varepsilon + 1) - r^2y (2\varepsilon - 1)\right)\right] \right. \\
&& \left. + \sqrt{-1}\,(2\varepsilon - 1) y \bar u z^2\right), \\
R^{\varepsilon}_{122\bar{3}} &=& \frac{\varepsilon\, v}{8\sqrt{-1}\, \det\Omega}\, \left(v\,\left[\sqrt{-1}\,(2\varepsilon - 1) uv x + z \left(s^2 x (2\varepsilon - 1) - r^2y (2\varepsilon + 1)\right)\right] \right. \\
&& \left. + \sqrt{-1}\,(2\varepsilon + 1) y \bar u z^2\right).
\end{eqnarray*}

Trivially, this two elements vanishes if $v=z=0$. Suppose that $vz\neq 0$.  Then, the system $R^{\varepsilon}_{121\bar{3}} = R^{\varepsilon}_{122\bar{3}}=0$ is equivalent to
$$
\left\{\begin{array}{rcl}
A &:=& v\,\left[\sqrt{-1}\,(2\varepsilon + 1) uv x + z \left(s^2 x (2\varepsilon + 1) - r^2y (2\varepsilon - 1)\right)\right] \\
&& + \sqrt{-1}\,(2\varepsilon - 1) y \bar u z^2 =0,\\[5pt]
B &:=& v\,\left[\sqrt{-1}\,(2\varepsilon - 1) uv x + z \left(s^2 x (2\varepsilon - 1) - r^2y (2\varepsilon + 1)\right)\right] \\
&& + \sqrt{-1}\,(2\varepsilon + 1) y \bar u z^2=0.
\end{array}\right.
$$
Consider the following homogeneous system, whose solutions are precisely $A=B=0$:
$$
\begin{cases}
A(1+2\varepsilon) + B(1-2\varepsilon)=0,\\[5pt]
A(1-2\varepsilon) + B(1+2\varepsilon)=0.
\end{cases}
$$
This last system reduces to
\begin{eqnarray*}
\begin{cases}
8\,\varepsilon\,v\,x\,(\sqrt{-1}uv + s^2z)=0,\\[5pt]
8\,\varepsilon\,\sqrt{-1} \,z\,y \,(\sqrt{-1}r^2v + \bar u z)=0.
\end{cases} &\Longleftrightarrow&
\begin{cases}
\sqrt{-1}uv + s^2z=0,\\[5pt]
y \,(\sqrt{-1}r^2v + \bar u z)=0.
\end{cases} \\
&\Longleftrightarrow&
\begin{cases}
u = \dfrac{\sqrt{-1}\,s^2\,z}{v},\\[5pt]
\dfrac{\sqrt{-1}\,y}{\bar v} \,(r^2|v|^2 - s^2|z|^2)=0.
\end{cases}
\end{eqnarray*}
Moreover, substituting $u = \dfrac{\sqrt{-1}\,s^2\,z}{v}$ in \eqref{det}, we get:
$$
8\sqrt{-1}\, \det\Omega  = \frac{(s^2t^2-|v|^2)(r^2|v|^2 - s^2|z|^2)}{|v|^2}\neq 0.
$$
So, we are forced to consider $y=0$.
However, in this particular case, $R^{\varepsilon}_{123\bar{2}} = \frac{4\sqrt{-1}\,\varepsilon^2 s^4 z}{s^2t^2-|v|^2}\neq 0$.

Let us study now the case when $vz=0$.  Suppose first $z=0$ and $v\neq 0$.  Now, $R^{\varepsilon}_{121\bar{2}}=\frac{-\varepsilon^2u^2v^2x}{2\det\Omega}=0$ if and
only if $u=0$.  But imposing this condition,	 we get that $R^{\varepsilon}_{231\bar{1}}=-\varepsilon v x\neq 0$.

On the other hand, if $v=0$ and $z\neq 0$, symmetry $R^{\varepsilon}_{121\bar{3}}=\frac{-\varepsilon (2 \varepsilon-1 ) y z^3 \bar u}{8\det\Omega}=0$ is equivalent to $(2\varepsilon-1) u y = 0$.
If $\varepsilon = \frac12$ (corresponding to the Strominger-Bismut connection), then
$$
\begin{cases}
R^{\frac12}_{132\bar{2}}=\frac{yz}{2}=0,\\[5pt]
R^{\frac12}_{121\bar{1}}=\frac{s^2z^2 x \bar u}{8\sqrt{-1}\det \Omega}=0,
\end{cases} \Longleftrightarrow \quad u=y=0.
$$
But now, $R^{\frac12}_{133\bar{3}}=\frac{\sqrt{-1}z}{2}\neq 0$.

Suppose now $\varepsilon\neq \frac12$.  If $u=0$,  directly from $R^{\varepsilon}_{132\bar{2}}=\varepsilon yz$ one gets $y=0$.
Analogously, if we suppose $y=0$, then $R^{\varepsilon}_{121\bar{1}}=\frac{-\sqrt{-1} \varepsilon s^2 z^2 \bar u x}{4\det\Omega}$ implies $u=0$.
So, if $\varepsilon\neq \frac12$ we may assume $u=y=0$.
At this point, observe that $R^{\varepsilon}_{131\bar{3}}=\frac{\varepsilon(2\varepsilon-1)(r^2-\sqrt{-1}xt^2)z^2}{8\sqrt{-1}\det \Omega}=0$
if and only if $\varepsilon=\frac12$.

Finally, if $v=z=0$, then
$$
R^{\varepsilon}_{133\bar{1}}=\frac{\varepsilon ((-1 + 2 \varepsilon) s^2 t^4 x^2 + (8 \varepsilon r^2 u + 2 \sqrt{-1} t^2 u x) \bar u)}{8it^2\det\Omega}=0
$$
if and only if the complex number
$$
(2\varepsilon-1) s^2 t^4 x^2 + 8\varepsilon r^2 |u|^2 + \sqrt{-1} (2t^2 x |u|^2)=0.
$$
Observe that the imaginary part vanishes if and only if $u=0$.  Furthermore, one can check that if $u=0$, then all the Chern-symmetries are
zero if and only if $\varepsilon = \frac12$.  As for Bianchi identity-symmetries, in the case $u=v=z=0$ and $\varepsilon = \frac12$, we
have $R^{\frac12}_{ij\bullet\bullet}=R^{\frac12}_{\bullet\bullet k\ell}=0$ for any $i,j,k,\ell\in\{1,2,3\}$ if and only if $y=0$, so the Lie algebra is isomorphic to $\frak g_4$.

Finally, note that $ R^\frac12_{1\bar11\bar1} = t^2 $, whence the last statement.
\end{proof}

\begin{rmk}
We notice that any diagonal metric on $\frak g_4$ gives examples of a non-flat K\"ahler-like Strominger-Bismut connection on a non-K\"ahler manifold. (For the existence of lattices, see \cite[Proposition 2.10]{fino-otal-ugarte}.)
\end{rmk}

\subsubsection{Solvmanifolds in Family (Siii2)}
By Section~\ref{subsec:complex-st}, the underlying Lie algebra is $\frak g_5$.

\begin{prop}
The connection $\nabla^{\varepsilon}$ is never K\"ahler-like.
\end{prop}

\begin{proof}
Let us suppose $\varepsilon=0$. Using \cite[Theorem 1.3]{yang-zheng}, a necessary condition for $\nabla^{Ch}$ to be K\"ahler-like is that $\omega$ must be a balanced metric.  According to \cite[Theorem 4.5]{fino-otal-ugarte}, $\omega$ is balanced if and only if $v=z=0$ and $u\in\mathbb R$.  Now, with these restrictions, $$B^0_{1\bar{1}2\bar{1}} = -r^2\neq 0,$$
whence the statement.

From now on, consider $\varepsilon\neq 0$ and a generic Hermitian metric $\omega$.  Let us focus on Chern-symmetries $R^{\varepsilon}_{121\bar{3}}$ and $R^{\varepsilon}_{122\bar{3}}$ and develop a similar argument as that in Proposition~\ref{prop-g467}:
\begin{eqnarray*}
R^{\varepsilon}_{121\bar{3}} &=& \frac{\varepsilon\, z}{8\sqrt{-1}\, \det\Omega}\, \left(\sqrt{-1}\,\left[(2\varepsilon + 1) r^2 v^2-z (2\varepsilon - 1)(\sqrt{-1}uv + s^2z)\right] \right. \\
&& \left. + (2\varepsilon + 1)  \bar u v z\right),\\
R^{\varepsilon}_{122\bar{3}} &=& \frac{-\varepsilon\, v}{8\sqrt{-1}\, \det\Omega}\, \left(\sqrt{-1}\,\left[(2\varepsilon - 1) r^2 v^2-z (2\varepsilon + 1)(\sqrt{-1}uv + s^2z)\right] \right. \\
&& \left. + (2\varepsilon - 1)  \bar u v z\right).
\end{eqnarray*}

Clearly, this two elements vanishes if $v=z=0$. Suppose that $vz\neq 0$.  Then, the system $R^{\varepsilon}_{121\bar{3}} = R^{\varepsilon}_{122\bar{3}}=0$ is equivalent to
$$
\begin{cases}
A := \sqrt{-1}\,\left[(2\varepsilon + 1) r^2 v^2-z (2\varepsilon - 1)(\sqrt{-1}uv + s^2z)\right] + (2\varepsilon + 1)  \bar u v z =0,\\[5pt]
B := \sqrt{-1}\,\left[(2\varepsilon - 1) r^2 v^2-z (2\varepsilon + 1)(\sqrt{-1}uv + s^2z)\right] + (2\varepsilon - 1)  \bar u v z=0.
\end{cases}
$$
Consider the following homogeneous system, whose solutions are precisely $A=B=0$:
$$
\begin{cases}
A(1+2\varepsilon) + B(1-2\varepsilon)=0,\\[5pt]
A(1-2\varepsilon) + B(1+2\varepsilon)=0.
\end{cases}
$$
This last system is equivalent to
$$
\begin{cases}
\sqrt{-1}uv + s^2z=0,\\[5pt]
\sqrt{-1}r^2v + \bar u z=0.
\end{cases} \Longleftrightarrow \quad
v=z=0,$$
which contradicts our initial hypothesis.

We can suppose now that $vz=0$.  In fact, in this case $v=z=0$ (use $R^{\varepsilon}_{122\bar{2}}=\frac{\sqrt{-1}\varepsilon^2 s^4 z^2}{2\det\Omega}=0$ when $v=0$
and $R^{\varepsilon}_{121\bar{1}}=\frac{\sqrt{-1}\varepsilon^2 r^4v^2}{2\det\Omega}$ if $z=0$).  Now, $B^{\varepsilon}_{1\bar{1}2\bar{1}}=(4\varepsilon-1) r^2=0$ if and only if $\varepsilon=\frac14$ but now $B^{\frac14}_{1\bar{1}3\bar{3}}=-r^2\neq 0$.
\end{proof}

\subsubsection{Solvmanifolds in Families (Siv1), (Siv2), and (Siv3)}
Recall that by Section~\ref{subsec:complex-st}, the underlying Lie algebra to these complex structures is $\frak g_8$.

\begin{prop}
For the holomorphically-parallelizable structure in Family (Siv1), we have that the Chern connection $\nabla^{Ch}$ is flat
and $\nabla^{\varepsilon\neq 0}$ is never K\"ahler-like.
\end{prop}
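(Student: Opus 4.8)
The plan is to treat the two assertions separately, the first being structural and the second computational. For the Chern-flatness I would simply invoke the general principle recalled in Section~\ref{subsec:complex-st}: Family (Siv1) is holomorphically parallelizable, its underlying group being the complex Lie group with brackets $[X_1,X_3]=X_1$, $[X_2,X_3]=-X_2$ (read off from $d\varphi^1=-\varphi^{13}$, $d\varphi^2=\varphi^{23}$, $d\varphi^3=0$ via $d\alpha(x,y)=-\alpha([x,y])$), all other brackets among the $X_I,\bar X_J$ vanishing. By \cite{wang} the solvmanifold is a quotient of this complex Lie group, and by \cite{boothby} every invariant Hermitian metric on it has $R^{Ch}\equiv0$. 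Hence $\nabla^{Ch}=\nabla^{0}$ is flat and, a fortiori, K\"ahler-like, consistently with the value $\varepsilon=0$ being excluded from the second statement.

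For $\varepsilon\neq0$ I would exploit that all Gauduchon connections sit on the affine line $\varepsilon+\rho=\frac12$ through $\nabla^{Ch}=\nabla^{0,\frac12}$, so that $\nabla^{\varepsilon}=\nabla^{\varepsilon,\frac12-\varepsilon}$. Subtracting the Chern Christoffel symbols from those of $\nabla^\varepsilon$ in the Metric formulas subsection gives $(\Gamma^\varepsilon)^{K}_{IH}-(\Gamma^{Ch})^{K}_{IH}=\varepsilon\,g^{KL}(T_{IHL}-C_{IHL})$, that is, $\nabla^\varepsilon=\nabla^{Ch}+\varepsilon\,\Psi$ for a fixed tensor $\Psi$ independent of $\varepsilon$. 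Since $R^{Ch}\equiv0$, the curvature $R^\varepsilon$ is then a polynomial in $\varepsilon$ with vanishing constant term, of degree two, whose coefficients are built from $\Psi$, the Chern torsion, and the metric coefficients of \eqref{eq:metric}. This is exactly the mechanism behind the Family (Np) computation, where the obstruction components emerge proportional to $\varepsilon^2$ over $8\sqrt{-1}\det\Omega$, and I expect the same shape here.

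The decisive step is to exhibit a single curvature component forced to vanish by the K\"ahler-like condition of Remark~\ref{caracterizacionKL} yet nonzero whenever $\varepsilon\neq0$. Concretely I would assemble $R^\varepsilon_{IHKL}$ from the structure constants above and \eqref{eq:metric}, and inspect a Bianchi-type element such as $B^\varepsilon_{1\bar1 3\bar3}=R^\varepsilon_{1\bar1 3\bar3}-R^\varepsilon_{3\bar1 1\bar3}$. By analogy with (Np) I anticipate a value of the form $\varepsilon^2\cdot(\text{expression in }r^2,s^2,t^2,u,v,z)/(8\sqrt{-1}\det\Omega)$ whose leading term is a product of the positive parameters $r^2,s^2,t^2$; combined with $\det\Omega\neq0$ from \eqref{det}, this vanishes if and only if $\varepsilon=0$, directly contradicting \eqref{eq:bianchi1-v2} for $\varepsilon\neq0$ and finishing the argument.

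The hard part will be controlling the algebra for the fully generic metric, where the mixed coefficients $u,v,z$ enter the inverse matrix $(g^{KL})$ and could in principle conspire to cancel the chosen component. To bypass a case split I would favour a component whose leading monomial survives the genericity constraints $r^2s^2>|u|^2$, $r^2t^2>|z|^2$, $s^2t^2>|v|^2$; should no single diagonal element suffice, the fallback is to show that the simultaneous vanishing of all diagonal Bianchi elements $B^\varepsilon_{i\bar i j\bar j}$ already forces $\varepsilon=0$. As elsewhere in this section, the explicit polynomial identities are confirmed with the symbolic computation assisting the argument, and the conclusion then follows from Remark~\ref{caracterizacionKL}.
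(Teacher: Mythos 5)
Your first half is correct and coincides with the paper's argument: holomorphic parallelizability gives $R^{Ch}\equiv 0$, so $\nabla^{Ch}$ is flat and in particular K\"ahler-like. Your decomposition $\nabla^{\varepsilon}=\nabla^{Ch}+\varepsilon\,\Psi$ with $\Psi^{K}_{IH}=g^{KL}\left(T_{IHL}-C_{IHL}\right)$ is also correct, and it does imply that $R^{\varepsilon}$ is a polynomial in $\varepsilon$ of degree two with no constant term. The genuine gap is in your ``decisive step'': you expect, by analogy with Family (Np), that some Bianchi element equals $\varepsilon^{2}$ times a positive quantity over $8\sqrt{-1}\det\Omega$, so that its vanishing forces $\varepsilon=0$. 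For (Siv1) this expectation is false, because the linear-in-$\varepsilon$ terms of $R^{\varepsilon}$ do not cancel (they do cancel in the nilpotent case (Np), which is what makes that computation special). Concretely, the paper's computation for this family produces entries such as $R^{\varepsilon}_{133\bar 1}=-(2\varepsilon-1)\varepsilon\, r^{2}$: the curvature carries the factor $\varepsilon(2\varepsilon-1)$ rather than $\varepsilon^{2}$, hence such components vanish not only at the Chern point $\varepsilon=0$ but also at the Strominger--Bismut point $\varepsilon=\tfrac12$. So no single component of the shape you anticipate can exist, and any argument along your lines must treat $\varepsilon=\tfrac12$ separately.

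This is exactly how the paper proceeds, in two steps that your plan is missing. First, the type condition \eqref{eq:type-v2} --- which you never invoke --- fails for every $\varepsilon\notin\{0,\tfrac12\}$: the identities $R^{\varepsilon}_{ij\bullet\bullet}=R^{\varepsilon}_{\bullet\bullet k\ell}\equiv 0$ hold only for $\varepsilon\in\{0,\tfrac12\}$, as the component $R^{\varepsilon}_{133\bar1}$ above already shows. Second, at $\varepsilon=\tfrac12$ the type conditions do hold, and one must instead exhibit a Bianchi violation: $B^{\frac12}_{1\bar 1 2\bar 2}=\frac{r^{4}s^{4}-|u|^{4}}{16\sqrt{-1}\det\Omega}\neq 0$, which is nonzero precisely because of the positivity constraint $r^{2}s^{2}>|u|^{2}$ attached to \eqref{eq:metric}. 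Your fallback (that the simultaneous vanishing of all diagonal elements $B^{\varepsilon}_{i\bar i j\bar j}$ forces $\varepsilon=0$) is not supported by any computation, and the analogous formulas in the paper's appendices show that Bianchi elements in these families typically carry factors such as $(2\varepsilon-1)(4\varepsilon-1)$, i.e.\ they have nonzero roots; so it is not at all clear that Bianchi-type identities alone can rule out every $\varepsilon\neq 0$. To repair the proof, split the analysis as the paper does: use \eqref{eq:type-v2} to exclude $\varepsilon\notin\{0,\tfrac12\}$, then exclude $\varepsilon=\tfrac12$ with the single element $B^{\frac12}_{1\bar 1 2\bar 2}$.
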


\begin{proof} The result about Chern connection comes from the fact that the structure is complex-parallelizable.
  When $\varepsilon\neq 0$, one can see that $R^{\varepsilon}_{ij\bullet\bullet}=R^{\varepsilon}_{\bullet\bullet k\ell}\equiv 0$ for any $i,j,k,\ell\in\{1,2,3\}$ if and only if $\varepsilon=\frac12$ (see for example
  $R^\varepsilon_{133\bar1}=-(2\varepsilon-1)\varepsilon r^2$).  But now, $B^{\frac12}_{1\bar{1}2\bar{2}} = \frac{r^4 s^4 - |u|^4}{16\sqrt{-1}\det\Omega}$ if and only if $r^4s^4-|u|^4=0$ which is a contradiction.
\end{proof}

\begin{prop}
For the complex structures in Family (Siv2), $\nabla^{\varepsilon}$ is never K\"ahler-like.
\end{prop}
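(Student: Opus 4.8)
The plan is to apply the characterization of Remark~\ref{caracterizacionKL}: the connection $\nabla^{\varepsilon}$ is K\"ahler-like precisely when the type conditions \eqref{eq:type-v2}, that is $R^{\varepsilon}_{ij\bullet\bullet}=R^{\varepsilon}_{\bullet\bullet k\ell}=0$, together with the Bianchi symmetries \eqref{eq:bianchi1-v2}, that is $B^{\varepsilon}_{i\bar jk\bar\ell}=0$, hold for all $i,j,k,\ell\in\{1,2,3\}$. As for the preceding families on $\mathfrak{g}_8$, I would first read off the structure constants $c_{IH}^{K}$ from the equations of Family (Siv2) via $d\alpha(x,y)=-\alpha([x,y])$; the feature distinguishing (Siv2) from (Siv1) and (Siv3) is the presence of the $(1,1)$-terms $\varphi^{3\bar3}$, which give a nonzero bracket $[\varphi_3,\bar\varphi_3]$ with components $c_{3\bar3}^{1}=-1$ and $c_{3\bar3}^{2}=-x$. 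I would then assemble the Christoffel symbols $(\Gamma^{\varepsilon,\rho})_{IH}^{K}$ and the $(4,0)$-curvature $(R^{\varepsilon})_{IHKL}$ for a generic invariant metric \eqref{eq:metric}, and split the analysis into the Chern case $\varepsilon=0$ and the remaining values $\varepsilon\neq0$, treating the two admissible parameters $x\in\{0,1\}$ in parallel.

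For $\varepsilon=0$ I would invoke the balanced obstruction of Theorem~\ref{thm:yang-zheng-balanced}: a K\"ahler-like Chern connection forces $\omega$ to be balanced. Computing $d\omega^2$ from \eqref{eq:metric} and the structure equations, I expect the balanced locus to be empty for Family (Siv2), consistent with the fact that this family does not appear in the list of balanced metrics recalled above, and with the short check that already the diagonal metric fails $d\omega^2=0$ because of the surviving term in $d(\varphi^{1\bar1}\wedge\varphi^{2\bar2})$. If the balanced locus is empty the Chern case is immediate; should some balanced metric survive, I would restrict to that locus and exhibit a single nonvanishing Bianchi component $B^{0}_{i\bar jk\bar\ell}$, exactly as in the (Sii) and (Siii2) arguments.

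For $\varepsilon\neq0$ the strategy mirrors Proposition~\ref{prop-g467} and the holomorphically-parallelizable case (Siv1): I would use the vanishing of a well-chosen type-condition component $R^{\varepsilon}_{ij\bullet\bullet}$ to pin $\varepsilon$ down to a finite set. In (Siv1) the entry $R^{\varepsilon}_{133\bar1}=-(2\varepsilon-1)\varepsilon r^2$ already forces $\varepsilon\in\{0,\frac12\}$, and I expect the analogous entry here to factor similarly, now carrying the extra $\varphi^{3\bar3}$ contributions through $c_{3\bar3}^{1},c_{3\bar3}^{2}$. Once $\varepsilon$ is reduced to the single surviving candidate $\varepsilon=\frac12$, I would produce a curvature or Bianchi entry that cannot vanish for a positive-definite metric, certifying nonvanishing through the bound $8\sqrt{-1}\,\det\Omega>0$ of \eqref{det}, thereby excluding this last case as well, and separately for $x=0$ and $x=1$ should the obstructing entry differ between the two.

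The main obstacle is the explicit curvature computation rather than the logical structure. Because the metric \eqref{eq:metric} is non-diagonal in $u,v,z$ and the complex structure is non-nilpotent with the mixed $\varphi^{3\bar3}$ terms, the Christoffel symbols and the $(4,0)$-curvature entries are cumbersome, and the delicate point is to isolate the cleanest obstructing components, those whose vanishing loci are transparent enough to force a contradiction. This is exactly the step carried out with the symbolic-computation software (Sage and Mathematica) as stated in the Introduction.
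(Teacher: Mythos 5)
Your $\varepsilon=0$ case is correct and is exactly the paper's argument: Family (Siv2) admits no balanced metrics, so Theorem~\ref{thm:yang-zheng-balanced} immediately rules out a K\"ahler-like Chern connection, and no case-by-case check on a balanced locus is needed since that locus is empty.

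The gap is in the case $\varepsilon\neq0$, where your plan rests on an intermediate step that you never verify and that, according to the paper's computation, does not materialize: you expect some type-condition component to factor as $\varepsilon(2\varepsilon-1)\cdot(\text{nonzero})$, as in Family (Siv1), so that $\varepsilon$ is first pinned to $\tfrac12$ and that single value is then excluded separately. The paper's obstruction works differently, and more uniformly. It uses the components
$$
R^{\varepsilon}_{123\bar{1}}=-\frac{\varepsilon^2(r^2s^2-|u|^2)\left(x\, r^2s^2 + x|u|^2 + 2\sqrt{-1}\, r^2 \bar u\right)}{2 \det \Omega},
\qquad
R^{\varepsilon}_{123\bar{2}}=-\frac{\varepsilon^2(r^2s^2-|u|^2)\left(r^2s^2 - 2\sqrt{-1}\, x\, s^2 u + |u|^2\right)}{2 \det \Omega},
$$
which carry the prefactor $\varepsilon^2$: hence for \emph{every} $\varepsilon\neq0$ their vanishing is equivalent to a system of equations on the metric parameters alone, with no dependence on $\varepsilon$. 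For $x=1$ this system forces $\Re u=0$, $\Im u=-r^2$, and $r^2=s^2$, hence $|u|^2=r^2s^2$, contradicting the positive-definiteness condition $r^2s^2>|u|^2$ of \eqref{eq:metric}; for $x=0$ the first equation gives $u=0$ and then $R^{\varepsilon}_{123\bar{2}}$ reduces to a nonzero multiple of $(r^2s^2-|u|^2)(r^2s^2+|u|^2)$, which never vanishes. So no reduction to $\varepsilon=\tfrac12$ occurs and no separate treatment of that value is needed. Since your proposal defers the entire curvature computation to software and exhibits no concrete nonvanishing component, the decisive content of the $\varepsilon\neq0$ case is missing, and the roadmap you outline would have to be revised --- not merely filled in --- once the actual components are computed.
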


\begin{proof}
The result for the Chern connection (namely $\varepsilon=0$) follows directly observing that complex structures in Family (Siv2) do not
admit balanced metrics and by \cite[Theorem 1.3]{yang-zheng}.

If $\varepsilon\neq 0$, then
$$
R^{\varepsilon}_{123\bar{1}}=-\frac{\varepsilon^2(r^2s^2-|u|^2)(x r^2s^2 + x|u|^2 + 2\sqrt{-1} r^2 \bar u)}{2 \det \Omega},
$$
and
$$
R^{\varepsilon}_{123\bar{2}}=-\frac{\varepsilon^2(r^2s^2-|u|^2)(r^2s^2 - 2\sqrt{-1} x s^2 u + |u|^2)}{2 \det \Omega}.
$$
Hence, $R^{\varepsilon}_{123\bar{1}}=R^{\varepsilon}_{123\bar{2}}=0$ if and only if
$$
xr^2s^2+x|u|^2 + 2\sqrt{-1} r^2 \bar u=0,
\quad
r^2s^2 + |u|^2 - 2\sqrt{-1} x s^2 u =0.$$
If $x =1$, these equations imply $\Re u=0$, $\Im u=-r^2$, and $r^2=s^2$, which is a contradiction to the positive
definiteness of the metric. Hence, $x=0$ and
$R^{\varepsilon}_{123\bar{2}}$ is always different from zero.
\end{proof}

\begin{prop}
For the complex structures in Family (Siv3), $\nabla^{\varepsilon}$ is K\"ahler-like if and only if $\varepsilon=0$ and $\omega$ is diagonal.
Moreover, in these cases, $\nabla^{Ch}$ is flat.
\end{prop}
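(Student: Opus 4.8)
The plan is to apply the characterization of Remark~\ref{caracterizacionKL}, according to which $\nabla^\varepsilon$ is K\"ahler-like precisely when the type conditions $R^\varepsilon_{ij\bullet\bullet}=R^\varepsilon_{\bullet\bullet k\ell}=0$ and the Bianchi-type identities $B^\varepsilon_{i\bar jk\bar\ell}=0$ hold for all $i,j,k,\ell\in\{1,2,3\}$. Working with the structure equations of Family (Siv3) and the generic metric \eqref{eq:metric}, I would compute the Christoffel symbols of $\nabla^\varepsilon$ and then the $(4,0)$-curvature components via the formulas recorded in the Metric formulas subsection, treating the parameter $A$ (with $|A|\neq1$) and the coefficients $r,s,t,u,v,z$ as symbolic unknowns to be fed into Sage and Mathematica.

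First I would dispose of the Chern case $\varepsilon=0$. By Theorem~\ref{thm:yang-zheng-balanced}, a necessary condition for $\nabla^{Ch}$ to be K\"ahler-like is that $\omega$ be balanced, and by \cite[Theorem 4.5]{fino-otal-ugarte} the balanced metrics in this family are exactly those with $v=z=0$. Imposing $v=z=0$, I expect the remaining Bianchi-type obstruction $B^0$ to reduce to a single essential component that vanishes precisely when $u=0$, exactly as happens in Family (Si); this forces $\omega$ to be diagonal. It then remains a direct verification that for a diagonal $\omega$ all curvature components of $\nabla^{Ch}$ vanish, yielding the Chern-flat claim in the \emph{Moreover} part.

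Next I would treat $\varepsilon\neq0$ and show that no metric makes $\nabla^\varepsilon$ K\"ahler-like, following the branching scheme of Proposition~\ref{prop-g467}. The strategy is to isolate a small number of the type-condition components $R^\varepsilon_{ij\bullet\bullet}$ --- for instance $R^\varepsilon_{123\bar1}$, $R^\varepsilon_{123\bar2}$, and a diagonal-block term such as $R^\varepsilon_{133\bar1}$ --- whose common vanishing, after clearing the nonvanishing factor $8\sqrt{-1}\det\Omega$ from \eqref{det}, forces a cascade of constraints on $u,v,z$ and on $\varepsilon$. I anticipate that the generic branch with $vz\neq0$ collapses once $u$ is solved for and the resulting expression for $\det\Omega$ is seen to be nonzero, while the degenerate branches (first $v=0$ or $z=0$, then $v=z=0$) are eliminated by exhibiting a single surviving nonzero curvature or Bianchi term; the hypothesis $|A|\neq1$ should be what guarantees that the relevant coefficients cannot vanish.

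The main obstacle is exactly this $\varepsilon\neq0$ analysis: unlike the Chern case, one cannot invoke the balanced reduction, so one must argue with a fully generic metric and carry the case split on which of $u,v,z$ are zero. The delicate point is to choose, in each branch, a curvature component whose nonvanishing is robust against the constraints already imposed --- this is where $|A|\neq1$ enters decisively --- while keeping track of the positivity conditions on \eqref{eq:metric} so that factors such as $r^2|v|^2-s^2|z|^2$ or $\det\Omega$ may legitimately be divided out.
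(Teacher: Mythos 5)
Your treatment of the Chern case coincides with the paper's: balancedness (via Theorem~\ref{thm:yang-zheng-balanced}) forces $v=z=0$, and a single Bianchi component, in fact the same one as in Family (Si), namely $B^0_{1\bar{3}3\bar{1}}=\frac{2r^2|u|^2}{r^2s^2-|u|^2}$, then forces $u=0$ with no restriction on the parameter $A$; Chern-flatness of the diagonal metrics is a direct check. Where you diverge from the paper is the case $\varepsilon\neq0$, and there your plan is both heavier than necessary and, as written, not yet a proof: you propose to replicate the branching scheme of Proposition~\ref{prop-g467} (split on $vz\neq0$, then $v=0$ or $z=0$, then $v=z=0$), but every step of that cascade is stated in the conditional (``I expect'', ``I anticipate'', ``should be''), no contradicting component is actually exhibited, and the components you name are type-condition terms $R^\varepsilon_{12\bullet\bullet}$, which in the final diagonal branch $u=v=z=0$ are liable to vanish identically --- there one needs a Bianchi term, which you allow for but never identify. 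Since the statement is true, a sufficiently exhaustive computation along your branches could be completed, but the endpoints you predict are unverified, and this is precisely the substance of the claim.

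The paper's argument for $\varepsilon\neq0$ needs no case analysis at all: the single Bianchi component
$$
B^{\varepsilon}_{1\bar{1}2\bar{2}} \;=\; \frac{2\varepsilon^2\,(r^2s^2-|u|^2)\left(r^2s^2|A-1|^2 + |u|^2 |A+1|^2\right)}{8\sqrt{-1}\, \det\Omega}
$$
is nonzero for every metric \eqref{eq:metric} and every $\varepsilon\neq0$, because $r^2s^2-|u|^2>0$ by positivity, $\det\Omega\neq0$ by \eqref{det}, and $r^2s^2|A-1|^2+|u|^2|A+1|^2>0$ since $|A|\neq1$ implies $A\neq\pm1$. This is exactly where $|A|\neq1$ enters: your guess about its role is correct, but it acts through this one uniform inequality, valid for all $u,v,z$ simultaneously, rather than through branch-by-branch nonvanishing of coefficients. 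So to close the gap in your proposal, either carry out and verify the full branching you sketch, or simply compute $B^\varepsilon_{1\bar{1}2\bar{2}}$ and observe that it never vanishes; the latter also disposes of $\varepsilon=\frac12$ with no special treatment.
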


\begin{proof}
Let us start with the case $\varepsilon=0$ and imposing $\omega$ to be balanced.  In this situation, $v=z=0$.  Moreover, $B^{0}\equiv 0$ if and only if $u=0$ (see for example $B^0_{1\bar33\bar1}=\frac{2r^2|u|^2}{r^2s^2-|u|^2}$).  Observe that we do not get restriction on the parameter $A$.

Finally, if $\varepsilon\neq 0$, we focus on Bianchi identity-symmetry $B^{\varepsilon}_{1\bar{1}2\bar{2}}$.  Observe that
$$B^{\varepsilon}_{1\bar{1}2\bar{2}} = \frac{2\varepsilon^2(r^2s^2-|u|^2) (r^2s^2|A-1|^2 + |u|^2 |A+1|^2)}{8\sqrt{-1}\, \det\Omega}\neq 0,$$
whence the statement.
\end{proof}

\subsubsection{Solvmanifolds in Family (Sv)}
By Section~\ref{subsec:complex-st}, the underlying Lie algebra is $\frak g_9$.

\begin{prop}
The connection $\nabla^{\varepsilon}$ is never K\"ahler-like.
\end{prop}

\begin{proof}
If $\varepsilon=0$, then the result follows directly by observing that $\frak g_9$ does not admit balanced metrics
and by \cite[Theorem 1.3]{yang-zheng}.

From now on, consider $\varepsilon\neq 0$. Let us focus on the Chern-symmetry $R^{\varepsilon}_{121\bar{2}}$:
$$R^{\varepsilon}_{121\bar{2}} = \frac{\bar v\left[(2r^2s^2-|u|^2)v - (\sqrt{-1}s^2\bar u)z\right]}{32\, \sqrt{-1} \det \Omega}.$$
Hence,
$$R^{\varepsilon}_{121\bar{2}} = 0 \quad \Longleftrightarrow \quad \bar v\left[(2r^2s^2-|u|^2)v - (\sqrt{-1}s^2\bar u)z\right]=0.$$
Suppose first that $v\neq 0$.  Then, we can express $$v=\frac{\sqrt{-1}s^2\bar u z}{2r^2s^2-|u|^2}.$$ Observe that, since $v\neq 0$, also $uz\neq 0$.
Using this particular value for $v$, $R^{\varepsilon}_{122\bar{1}}=\frac{\varepsilon(1-2\varepsilon)s^2\bar u^2}{4(r²s^2-|u|^2)}=0$ if and only if $\varepsilon = \frac12$, but now $R^{\frac12}_{131\bar{2}}=\frac{s^2u\bar z}{4(r^2s^2-|u|^2)}\neq0$.

On the other hand, if $v=0$, then $R^{\varepsilon}_{122\bar{3}}=\frac{\varepsilon(1-2\varepsilon)s^2z\bar u^2}{16\sqrt{-1}\det\Omega}=0$ holds if and only if $(2\varepsilon-1)uz=0$.
None of the three cases leads to K\"ahler-like connections: if $\varepsilon = \frac12$, then $R^{\frac12}_{233\bar{3}}=\frac{s^4|z|^2}{32\sqrt{-1}\det\Omega}=0$ to
conclude that $z=0$ but then $R^{\frac12}_{123\bar{1}}=\frac{-(r^2s^2-|u|^2)}{4t^2}\neq 0$; if $z=0$,
then $R^{\varepsilon}_{131\bar{3}}=\frac{\varepsilon(2\varepsilon-1)r^4}{t^2}$ vanishes if and only if $\varepsilon = \frac12$,
that has already been studied; finally, if $u=0$, then take $R^{\varepsilon}_{121\bar{1}}=\frac{-\varepsilon r^2s^2z}{4(r^2s^2-|z|^2)}$.
This element is zero if and only if $z=0$, which has already been analysed.
\end{proof}

\section{K\"ahler-like Levi-Civita connection}

\subsection{Hermitian condition along the Ricci-flow}
In this section, we prove that the K\"ahler-like condition for the Levi-Civita connection assures that the Ricci flow preserves the Hermitian condition of the initial metric along analytic solutions.
We wonder whether, as an application, one may prove that non-toral nilmanifolds do not admit
invariant Hermitian metrics whose Levi-Civita connection is K\"ahler-like, thanks to the results by Lauret \cite{lauret} on the Ricci flow for nilpotent Lie groups.
In fact, we will prove that this is true in dimension six, see Proposition \ref{prop:lc}.

\begin{thm}\label{thm:ricci-flow-hermitian}
Let $g_0$ be a Hermitian metric on a compact complex manifold, and consider an analytic solution $(g(t))_{t\in(-\varepsilon,\varepsilon)}$ for $\varepsilon>0$ of the Ricci flow
\begin{equation}\label{eq:ricci-flow}\tag{RF}
\frac{d}{dt}g(t) = -\mathrm{Ric}(g(t)), \qquad g(0)=g_0 .
\end{equation}
If the Levi-Civita connection of $g_0$ is K\"ahler-like, then $g(t)$ is Hermitian for any $t$.
\end{thm}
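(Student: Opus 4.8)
The plan is to exploit the real-analyticity of the solution in order to reduce the claim to an infinite family of pointwise, algebraic conditions. Write $\tau$ for the fibrewise involution $(\tau S)(\_,\_):=S(J\_,J\_)$ on symmetric $2$-tensors; a metric is Hermitian precisely when it is fixed by $\tau$, so it suffices to prove that the $J$-anti-invariant part $b(t):=\frac{1}{2}\big(g(t)-\tau g(t)\big)$ vanishes identically. Since $(g(t))_t$ is analytic, for each point and each pair of tangent vectors the scalar $b(t)(\_,\_)$ is a real-analytic function of $t$, so $b\equiv 0$ on $(-\varepsilon,\varepsilon)$ as soon as all its time-derivatives vanish at $t=0$. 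Equivalently, I must show that every Taylor coefficient $g^{(k)}:=\frac{d^k}{dt^k}g(t)\big|_{t=0}$ is $J$-invariant, that is $\tau g^{(k)}=g^{(k)}$.

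First I would record the base of the induction. By Definition~\ref{def:KL} and \eqref{eq:type}, the hypothesis that $\nabla^{LC}$ is K\"ahler-like means $R^{LC}(x,y,z,w)=R^{LC}(x,y,Jz,Jw)$ and $R^{LC}(x,y,z,w)=R^{LC}(Jx,Jy,z,w)$; applying the first identity to $(Jx,Jy,z,w)$ and combining gives the full invariance $R^{LC}(Jx,Jy,Jz,Jw)=R^{LC}(x,y,z,w)$, equivalently the pure-type vanishing \eqref{eq:gray}. Tracing over a $g_0$-orthonormal frame — which $J$ carries to another orthonormal frame, as $g_0$ is Hermitian — then yields that $\mathrm{Ric}(g_0)$ is $J$-invariant. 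Consequently $g^{(1)}=-\mathrm{Ric}(g_0)$ is Hermitian, so the flow \eqref{eq:ricci-flow} is tangent to the linear space of Hermitian tensors at $t=0$.

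For the inductive step I would differentiate \eqref{eq:ricci-flow}. Since $\mathrm{Ric}$ is a contraction of the Riemann tensor against $g^{-1}$, and $\tau$ commutes with tensor contractions, the coefficient $g^{(k+1)}=-\tfrac{d^{k}}{dt^{k}}\mathrm{Ric}(g(t))\big|_{0}$ is, by the Leibniz rule, a universal expression in the inverse metric, the curvature, and their $t$-derivatives of order $\le k$. The derivatives of $g^{-1}$ are polynomials in $g_0^{-1}$ and the $g^{(i)}$ with $i\le k$, hence $J$-invariant by the inductive hypothesis. The delicate inputs are the spatial derivatives entering the curvature: here the strategy is to carry out all covariant differentiation with the Chern connection $\nabla^{Ch}$, which satisfies $\nabla^{Ch}J=0$ and therefore commutes with $\tau$, writing $\nabla^{LC}=\nabla^{Ch}+A(g)$ for the difference tensor $A$. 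In this gauge spatial differentiation preserves $J$-parity, and the computation reduces to the fibrewise $J$-parity of the building blocks $g$, $g^{-1}$, $A(g_0)$ and the Chern torsion.

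The main obstacle is exactly the non-K\"ahler defect encoded by $A$. Since $\nabla(J^2)=0$ forces $(\nabla^{LC}_x J)$ to anti-commute with $J$, the difference tensor $A(g_0)$ has a non-trivial $J$-odd component, and a priori the higher time-derivatives of $\mathrm{Ric}$ could acquire $J$-odd contributions even when all lower $g^{(i)}$ are $J$-invariant. The crux is to show that, at a metric whose Levi-Civita curvature is fully $J$-invariant, these $J$-odd factors enter only in even combinations, so that $\tau\big(\tfrac{d^{k}}{dt^{k}}\mathrm{Ric}(g(t))\big|_{0}\big)=\tfrac{d^{k}}{dt^{k}}\mathrm{Ric}(g(t))\big|_{0}$ whenever $g^{(i)}$ is $J$-invariant for all $i\le k$. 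Granting this parity bookkeeping, the induction closes, every $g^{(k)}$ is Hermitian, and analyticity upgrades the infinitesimal statement to $b\equiv 0$, i.e. $g(t)$ is Hermitian throughout $(-\varepsilon,\varepsilon)$.
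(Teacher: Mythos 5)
Your analyticity reduction (all Taylor coefficients of the $J$-anti-invariant part of $g(t)$ must vanish at $t=0$) and your base case (K\"ahler-like $\Rightarrow$ \eqref{eq:gray} $\Rightarrow$ $\mathrm{Ric}(g_0)$ is $J$-invariant, so $g^{(1)}$ is Hermitian) coincide with the paper's proof. But the inductive step, which is the entire content of the theorem beyond first order, is not proved: you write ``Granting this parity bookkeeping, the induction closes.'' That parity claim --- that at a metric with fully $J$-invariant Levi-Civita curvature the $J$-odd difference tensor $A=\nabla^{LC}-\nabla^{Ch}$ enters the iterated time-derivatives of $\mathrm{Ric}(g(t))$ only in even combinations --- is exactly the hard point, and your setup supplies no mechanism for it. Once you expand the $\nabla^{LC}$-curvature in the Chern gauge, $A$ appears \emph{linearly} (through the term $\nabla^{Ch}A$), not only quadratically, so there is no formal parity argument available; the needed cancellations must come from the curvature hypothesis, and you never explain how. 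Moreover your induction hypothesis, which records only the $J$-invariance of the coefficients $g^{(i)}$, $i\le k$, is too weak: the spatial-derivative terms in $\frac{d^{k}}{dt^{k}}\mathrm{Ric}(g(t))\big\lfloor_{t=0}$ involve covariant derivatives of these tensors, and $J$-invariance of a tensor is not preserved by $\nabla^{LC}$-differentiation when $\nabla^{LC}J\neq 0$.

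The paper closes this gap by a different mechanism, and this is the idea missing from your proposal. Instead of differentiating $\mathrm{Ric}(g(t))$ as a nonlinear differential operator in the metric, it invokes Hamilton's evolution equations along \eqref{eq:ricci-flow} for both $\mathrm{Ric}(t)$ and the full curvature $R(t)$: these express each time-derivative as a Laplacian of the same tensor plus purely \emph{algebraic} quadratic contractions of $R$ and $\mathrm{Ric}$ against $g^{-1}$. The induction is then run on a stronger, tensorial hypothesis: every time-derivative of $R_{IJKL}$ vanishes whenever three or more of the indices $(I,J,K,L)$ have the same type. The K\"ahler-like hypothesis (\eqref{eq:type} together with \eqref{eq:gray}) gives this at order zero, and the property is visibly stable under the quadratic contraction terms in Hamilton's equations, hence propagates to all orders; tracing then kills the pure-type components of all derivatives of $\mathrm{Ric}$, and thus of $g$. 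To repair your argument you would either have to prove your parity claim directly --- which essentially amounts to redoing this curvature-type bookkeeping in the Chern gauge --- or switch, as the paper does, to the evolution equations for the curvature itself and strengthen the induction hypothesis accordingly.
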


\begin{proof}
The proof follows  by the evolution equation for the curvatures along the Ricci flow in \cite[Corollary 7.3 and Theorem 7.1]{hamilton}:
\begin{eqnarray*}
\frac{d}{dt} \mathrm{Ric}(t)_{JK} &=& \Delta_{t} \mathrm{Ric}(t)_{JK}+2g(t)^{PQ}g(t)^{RS}R(t)_{PJKR}\mathrm{Ric}(t)_{QS} \\
&& -2g(t)^{PQ}\mathrm{Ric}(t)_{JP}\mathrm{Ric}(t)_{QK} , \\
\frac{d}{dt} R(t)_{IJKL} &=& \Delta_{t} R(t)_{IJKL} \\
&& + 2 \left( g(t)^{PR} g(t)^{QS} R(t)_{IPJQ} R(t)_{KRLS} \right. \\
&& \left. - g(t)^{PR} g(t)^{QS} R(t)_{IPJQ} R(t)_{LRKS} \right. \\
&& \left. - g(t)^{PR} g(t)^{QS} R(t)_{IPLQ} R(t)_{JRKS} \right. \\
&& \left. + g(t)^{PR} g(t)^{QS} R(t)_{IPKQ} R(t)_{JRLS} \right) \\
&& -g(t)^{PQ} \left( R(t)_{PJKL} \mathrm{Ric}(t)_{QI} + R(t)_{IPKL} \mathrm{Ric}(t)_{QJ} \right. \\
&& \left. + R(t)_{IJPL} \mathrm{Ric}(t)_{QK} + R(t)_{IJKP} \mathrm{Ric}(t)_{QL} \right) .
\end{eqnarray*}
Here, capital letters $I$, $J$, \dots vary in $\{1, \ldots, n, \bar1, \ldots, \bar n\}$ while small letters $i$, $j$, \dots vary in $\{1,\ldots,n\}$, and for simplicity we just write the dependence in $t$ meaning to refer to the quantities for $g(t)$. The dot denotes the derivative at zero, {\itshape e.g.} $\dot{\mathrm{Ric}}_{jk} = \left.\frac{d}{dt}\right\lfloor_{t=0} \mathrm{Ric}_{jk}(t)$.

The K\"ahler-like condition assures that
$$ R_{ijKL}(0)=0, \qquad \mathrm{Ric}_{ij}(0)=0 . $$
This, together with the formula for the evolution of the Ricci curvature tensor, whence
$$ \dot{\mathrm{Ric}}_{jk} = 0 , $$
yields that
$$ \dot g_{jk} = \ddot g_{jk} = 0 . $$

Moreover, notice that the K\"ahler-like assumption forces $R_{IJKL}(0)$ to be zero whenever three or more of the indices in $(I,J,K,L)$ have the same type. The same holds for the tensor $g(0)^{PR}g(0)^{QS}R(0)_{PIQJ}R(0)_{RKSL}$, that appears in the right-hand-side for $\dot R$. From the evolution equation of the Riemann curvature tensor, we get the same property also for $\dot{R}_{IJKL}$. Then we get
$$ \dddot{g}_{jk}=0 $$
by straightforward computations.

Further derivatives behave similarly. Indeed, the only remark to be noticed is that $\dot{R}_{IJKL}$ involves the tensors $R_{\bullet\bullet\bullet\bullet}(0)$, $\dot{R}_{\bullet\bullet\bullet\bullet}$, $\mathrm{Ric}_{\bullet\bullet}(0)$, $\dot{\mathrm{Ric}}_{\bullet\bullet}$, other than the Hermitian metric $g_{\bullet\bullet}(0)$ and its time-derivative $\dot{g}_{\bullet\bullet}$, whence it still satisfies the property to vanish whenever three out of $(I,J,K,L)$ have the same type.

So we get that the solution $g(t)_{jk}$ has vanishing time-derivatives of any order at $0$. Being analytic, it is constant, so $g(t)_{jk}=g(0)_{jk}=0$, that is, $g(t)$ is still Hermitian.
\end{proof}

\subsection{K\"ahler-like Levi-Civita connection on six-dimensional Calabi-Yau solvmanifolds}\label{LCKL}
In this section, we prove the following:
\begin{prop}\label{prop:lc}
Let $(X,J,g)$ be a complex six-dimensional solvmanifold with holomorphically-trivial canonical bundle endowed with an
invariant Hermitian metric. The Levi-Civita connection is K\"ahler-like if and only if the metric is K\"ahler.
\end{prop}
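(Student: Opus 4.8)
The easy implication is immediate: if $g$ is K\"ahler then $\nabla^{LC}J=0$, so that $\nabla^{LC}=\nabla^{Ch}$; being torsion-free it satisfies the first Bianchi identity \eqref{eq:bianchi1}, and being the Chern connection it satisfies the type condition \eqref{eq:type}, hence it is K\"ahler-like in the sense of Definition~\ref{def:KL}. The plan is therefore to prove the converse by contraposition, working through the classifications in Tables~\ref{table:nil-cplx} and~\ref{table:solv-cplx}. The first reduction I would make is to invoke Theorem~\ref{thm:yang-zheng-balanced}: if $\nabla^{LC}$ is K\"ahler-like then $g$ is balanced. This cuts the analysis down drastically, since invariant balanced metrics exist only on the structures recalled in the discussion of special Hermitian metrics above, and it suffices to show that on each balanced non-K\"ahler structure the Levi-Civita connection fails to be K\"ahler-like.

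A second step disposes of several cases abstractly by combining the Chern and Levi-Civita conditions. Whenever the Chern connection is itself K\"ahler-like on the structure at hand (by Theorem~\ref{thm:solv-precise} this occurs for $\mathfrak h_5$ in Family (Np), for the diagonal metrics on $\mathfrak g_1$ and $\mathfrak g_2^{\alpha>0}$ in Family (Si), and for $\mathfrak g_8$ in Families (Siv1) and diagonal (Siv3)), the assumption that $\nabla^{LC}$ is also K\"ahler-like would force $g$ to be K\"ahler by \cite[Theorem 1.2]{yang-zheng}, which applies since the complex dimension is $n=3\geq 3$. For the holomorphically-parallelizable cases $\mathfrak h_5$ (Iwasawa) and $\mathfrak g_8$ (Nakamura, Family (Siv1)) this is a contradiction, because a holomorphically-parallelizable K\"ahler manifold is a torus by \cite[Corollary 2]{wang}; for the non-K\"ahler diagonal metrics on $\mathfrak g_1$, $\mathfrak g_2^{\alpha>0}$, and $\mathfrak g_8$ (Siv3) it again contradicts Theorem~\ref{thm:solv-precise}. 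Hence in all of these cases $\nabla^{LC}$ is not K\"ahler-like.

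The third and main step treats the remaining balanced structures, where the Chern connection is \emph{not} K\"ahler-like and so no shortcut is available: the families (Ni), (Niii), (Si) with $u\neq 0$, (Sii), (Siii2), (Siii4), and (Siv3) with $u\neq 0$. Here I would compute the Levi-Civita curvature directly, taking the Christoffel symbols of $\nabla^{\varepsilon,\rho}$ with $\varepsilon=\rho=0$, and test K\"ahler-likeness through the criterion of Remark~\ref{caracterizacionKL}, namely $R^{LC}_{ij\bullet\bullet}=R^{LC}_{\bullet\bullet k\ell}=0$ together with $B^{LC}_{i\bar j k\bar\ell}=0$. After substituting the balanced constraints on the parameters of \eqref{eq:metric} recalled above, the goal is to exhibit, family by family, a single curvature component that is non-zero for every admissible choice of metric parameters, thereby violating K\"ahler-likeness. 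Combining the three steps, the hypothesis that $\nabla^{LC}$ is K\"ahler-like, which already entails balancedness, forces $g$ to be K\"ahler.

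The main obstacle is computational rather than conceptual. Unlike the Chern and Gauduchon connections treated earlier, the Levi-Civita connection is torsion-free and generically carries the full set of non-vanishing Christoffel symbols, so its $(4,0)$-curvature is considerably bulkier and is best handled with the symbolic software already used in the paper. The delicate point is to check, after imposing the balanced relations, that the chosen obstruction component is not identically zero on the (nonempty, open) locus of admissible metrics determined by the positive-definiteness conditions following \eqref{eq:metric}; in several families it is cleaner to detect the failure through a type-condition component $R^{LC}_{ij\bullet\bullet}$ or $R^{LC}_{\bullet\bullet k\ell}$ rather than through a Bianchi-type component $B^{LC}_{i\bar j k\bar\ell}$, and I would select whichever yields the simplest manifestly non-vanishing expression in each case.
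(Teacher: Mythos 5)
Your proposal is correct and shares the paper's skeleton: reduce to balanced metrics via Theorem~\ref{thm:yang-zheng-balanced}, sweep the classifications of Tables~\ref{table:nil-cplx} and~\ref{table:solv-cplx}, and kill each surviving non-K\"ahler case by exhibiting one non-vanishing component of $R^{LC}$ or $B^{LC}$ through symbolic computation. The genuine difference is the scope of your abstract step: you combine Theorem~\ref{thm:solv-precise} with \cite[Theorem 1.2]{yang-zheng} to dispose of \emph{every} structure whose Chern connection is K\"ahler-like (the diagonal metrics on $\mathfrak{g}_1$ and $\mathfrak{g}_2^{\alpha>0}$ in Family (Si), and on $\mathfrak{g}_8$ in Families (Siv1) and (Siv3)), whereas the paper applies this shortcut only to the holomorphically-parallelizable nilpotent family (Np) and treats all solvable families by direct computation: the single component $R^{LC}_{12\bar{1}\bar{2}}$ for (Si), whose vanishing is shown to force $A=\sqrt{-1}$ and $u=0$; the component $R^{LC}_{133\bar{1}}=\frac{r^2}{8}\neq0$ for (Siv1); and a two-equation system for (Siv3). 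Your route buys a smaller computational load and makes transparent that the new content of the proposition sits only where the Chern connection fails to be K\"ahler-like; the paper's route keeps Proposition~\ref{prop:lc} independent of the lengthy case analysis behind Theorem~\ref{thm:solv-precise}, and its unified formula for (Si) delivers the ``if and only if'' characterization for $\mathfrak{g}_2^0$ in one stroke. Two minor caveats: first, the contradiction you need in the non-parallelizable diagonal cases is with the non-existence of K\"ahler metrics on $\mathfrak{g}_1$, $\mathfrak{g}_2^{\alpha>0}$, $\mathfrak{g}_8$ (recalled in the paper's review of special Hermitian metrics, ultimately from \cite{hasegawa, otal-thesis}), not literally with Theorem~\ref{thm:solv-precise}; second, the computational core you defer to software --- Families (Ni), (Niii), (Sii), (Siii2), (Siii4), and the non-diagonal balanced metrics in (Si) and (Siv3) --- is exactly what the paper carries out explicitly, for instance $R^{LC}_{1\bar{1}12}=-\frac{3}{8}\,\rho\,t^2$ in (Ni) and $R^{LC}_{3\bar{2}2\bar{3}}=-\frac{s^4+t^4}{8r^2}\neq0$ in (Niii), so your plan does go through as stated.
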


\begin{proof}
First, consider the nilpotent case.
We provide explicit computations.
Without loss of generality, we can reduce to the case when the Hermitian metric is balanced, thanks to \cite[Theorem 1.3]{yang-zheng}, and we can exclude the holomorphically-parallelizable case (Np) thanks to the fact that holomorphically-parallelizable manifolds are Chern-flat and to \cite[Theorem 1.2]{yang-zheng}. So we only remain with Family (Ni) with the exception of Lie algebra $\mathfrak h_8$, and the Family (Niii) in fact only for $\mathfrak h_{19}^-$.
We distinguish complex structures of nilpotent, respectively non-nilpotent type, see \cite{ugarte-villacampa}.

For nilpotent complex structures on nilpotent Lie algebras, argue as follows. First of all, the generic Hermitian metric as in \eqref{eq:metric} is balanced if and only if it is equivalent to one with parameters $r^2=1$, $v=z=0$, and also $s^2+D=\sqrt{-1}\bar{u}\lambda$, see \cite{ugarte-villacampa}.
We compute, for example, $R^{LC}_{1\bar112}=-\frac{3}{8}\,\rho\, t^2$.
So the Levi-Civita connection of a balanced metric on a nilmanifold with complex structure in Family (Ni) with $\rho=1$ is never K\"ahler-like.
This forces $\rho=0$, and the Lie algebra to be either $\mathfrak{h}_3$ (with $\lambda=0$ and $D=-1$) or $\mathfrak{h}_5$ (with $\lambda=1$ and $D\in[0,\frac{1}{4})$).
Moreover, since $R^{LC}_{3\bar213}=\frac{1}{8}\,\frac{\sqrt{-1}\, t^4\, u\, \bar{D}}{s^2 - |u|^2}$, then either $u=0$ (case $\mathfrak{h}_3$) or $D=0$ (case $\mathfrak{h}_5$).

In the first case, that is, when $\rho=\lambda=0$ and $D=-1$, we compute $B^{LC}_{1\bar13\bar3} = \frac{{\left(\lambda^{2} + \rho^{2} + s^{2} + \sqrt{-1} \, \lambda u - \sqrt{-1} \, \lambda \overline{u}\right)} t^{4}}{8 \, {\left(s^{2} - u \overline{u}\right)}}$ that reduces to $B^{LC}_{1\bar13\bar3} = \frac{{s^{2}} t^{4}}{8 \, {\left(s^{2} - u \overline{u}\right)}}\neq0$, in fact, $B^{LC}_{1\bar13\bar3} = \frac{t^{4}}{8}\neq0$, by using $s^2=1$ and $u=0$. Therefore, in this case, the Levi-Civita connection is not K\"ahler-like.

In the second case, that is, when $\rho=D=0$ and $\lambda=1$, we compute
$B^{LC}_{1\bar12\bar2} = -\frac{1}{8} \, {\left(\lambda^{2} + 2 \, \rho^{2} - D - \overline{D}\right)} t^{2}$ that reduces to $B^{LC}_{1\bar12\bar2} = -\frac{1}{8} \, t^{2}\neq 0$. Therefore, also in this second case,
the Levi-Civita connection is not K\"ahler-like.

Finally, for non-nilpotent complex structure on nilpotent Lie algebras, argue as follows.
The generic Hermitian balanced metric as in \eqref{eq:metric} forces $\nu=0$ (case $\mathfrak h_{19}^-$) and can be reduced to parameters $u=z=0$, (and moreover either $v=1$, or $v=0$ and $t^2=1$,) see \cite{ugarte-villacampa}.
We compute, for example, $R^{LC}_{3\bar22\bar3}=-\frac{s^4+t^4}{8r^2}\neq 0$ and $R^{LC}_{2\bar23\bar3}=0$. Therefore the Levi-Civita connection is not K\"ahler-like.

We restrict now our attention to the solvable non-nilpotent case.

We remind that by~\cite[Theorem 1.3]{yang-zheng}, the Levi-Civita connection be K\"ahler-like implies that the metric is balanced.
Hence and by~\cite{fino-otal-ugarte} we restrict our attention to the Lie algebras $\frak g_1$, $\frak g_2^{\alpha}$, $\frak g_3$, $\frak g_5$, $\frak g_7$, and $\frak g_8$.

If the Lie algebra is isomorphic to $\frak g_1$ or $\frak g_2^{\alpha}$ (case (Si)) then:
\begin{eqnarray*}
\lefteqn{ R^{\text{LC}}_{12\bar{1}\bar{2}}=\frac{(A^2+\bar A^2)(r^2s^2-|u|^2)+2|A|^2(r^2s^2+|u|^2)}{4t^2}=0 } \\
&\Leftrightarrow& (\Re A)^2r^2s^2+(\Im A)^2|u|^2=0,\\
&\Leftrightarrow& \Re A=0,\, u=0, \\
&\Leftrightarrow&A=\sqrt{-1},\, u=0.
\end{eqnarray*}
That is, the Lie algebra is $\frak g_2^0$ and $\omega$ is K\"ahler.

If the Lie algebra is isomorphic to $\frak g_3$ (case (Sii)) then:
$$
R^{\text{LC}}_{23\bar{2}\bar{3}}=\frac{(1+4x^2)(t^4+4s^4x^2)}{128r^2x^2}\neq0.
$$

If the Lie algebra is isomorphic to $\frak g_5$ (case (Siii2)) then:
$$
\begin{array}{lclcl}
R^{\text{LC}}_{13\bar{2}\bar{3}}=\frac{-\sqrt{-1}(4r^2s^2+t^4)u}{8(r^2s^2-u^2)}=0 &\Leftrightarrow& \sqrt{-1}(4r^2s^2+t^4)u=0&\Leftrightarrow&u=0.
\end{array}
$$
But $u=0$ implies $R^{\text{LC}}_{233\bar{1}}=\frac{-t^2}{2}\neq0$.

If the Lie algebra is isomorphic to $\frak g_7$ (case (Siii4)) then:
$$
R^{\text{LC}}_{23\bar{2}\bar{3}}=\frac{s^2(t^4+4|u|^2)}{8(s^4-|u|^2)}\neq0.
$$
If the Lie algebra is isomorphic to $\frak g_8$, then the complex structure belongs to Families (Siv1) or (Siv3).
In the first case we directly find that $R^{\text{LC}}_{133\bar{1}}=\frac{r^2}{8}\neq 0$.
In the second case the K\"ahler-like condition implies the following system:
$$
\begin{array}{lcl}
R^{\text{LC}}_{133\bar{1}}&=&\frac{r^2\left[(A-1)^2r^2s^2-(A^2-6A+1)|u|^2\right]}{8(r^2s^2-|u|^2)}=0,\\
R^{\text{LC}}_{133\bar{2}}&=&\frac{-\sqrt{-1}u\left[(A^2+6A+1)r^2s^2-(A+1)^2|u|^2\right]}{8(r^2s^2-|u|^2)}=0.
\end{array}
$$
If $u=0$ then the first equation implies $(A-1)^2r^2s^2=0$ which is not possible. On the other hand if
$u\neq 0$ then the vanishing of the brackets corresponds to a homogeneous linear system in $r^2s^2$ and
$|u|^2$. The equations are linear dependent if and only if $A=0$, but in this case the second equation reduces to $r^2s^2-|u|^2=0$.
\end{proof}

\appendix

\renewcommand\thesection{\Roman{section}}

\section{Bianchi and Chern symmetries for the Gauduchon connections on nilmanifolds in Family (Ni)}\label{app:BC-Ni}

\begin{eqnarray*}
R^{\varepsilon}_{121\bar{1}}& =&2\,t^2\,\varepsilon (1-\varepsilon) \rho,\\[5pt]
R^{\varepsilon}_{122\bar{1}}& =&\lambda\,R^{\varepsilon}_{121\bar{1}},\\[5pt]
R^{\varepsilon}_{122\bar{2}}& =&\bar D\,R^{\varepsilon}_{121\bar{1}},\\[5pt]
R^{\varepsilon}_{123\bar{3}}& =&\frac{t^4\,\varepsilon}{s^2-|u|^2}\rho(2\varepsilon-1) (s^2+\sqrt{-1}\lambda u + \bar D),\\[5pt]
R^{\varepsilon}_{131\bar{3}} & =& \frac{-2\sqrt{-1}\,t^4\,\varepsilon^2}{s^2-|u|^2} \rho\, u,\\[5pt]
R^{\varepsilon}_{132\bar{3}} & =& \frac{-2\,t^4\,\varepsilon^2}{s^2-|u|^2}  \rho\, (\sqrt{-1}\lambda u + \bar D),\\[5pt]
R^{\varepsilon}_{133\bar{1}} & =& \frac{-t^4\,\varepsilon}{s^2-|u|^2} (2\varepsilon-1) (s^2+\sqrt{-1}\lambda u),\\[5pt]
R^{\varepsilon}_{133\bar{2}} & =& \frac{-\sqrt{-1}\,t^4\,\varepsilon}{s^2-|u|^2} (2\varepsilon-1)\,u\, \bar D,\\[5pt]
R^{\varepsilon}_{231\bar{3}} & =& \frac{2\,s^2\,t^4\,\varepsilon^2}{s^2-|u|^2}  \rho ,\\[5pt]
R^{\varepsilon}_{232\bar{3}} & =& \frac{2\,t^4\,\varepsilon^2}{s^2-|u|^2} \rho\, (\lambda s^2-\sqrt{-1}\bar u \bar D),\\[5pt]
R^{\varepsilon}_{233\bar{1}} & =& \frac{-t^4\,\varepsilon}{s^2-|u|^2} (2\varepsilon-1) (\lambda (s^2+\sqrt{-1}\lambda u + \bar D)-\sqrt{-1}\bar u \bar D),\\[5pt]
R^{\varepsilon}_{233\bar{2}} & =& \frac{-t^4\,\varepsilon}{s^2-|u|^2} (2\varepsilon-1)\bar D (\sqrt{-1}\lambda u + \bar D).\\[5pt]
B^{\varepsilon}_{1\bar{1}2\bar{1}}& =& \frac{-t^2}{2} (2\varepsilon-1)^2\,\lambda,\\[5pt]
B^{\varepsilon}_{1\bar{1}2\bar{2}}& =& \frac{-t^2}{2} \left[2\varepsilon (\lambda^2 - D + 4\rho \varepsilon) + \bar D (4\varepsilon^2 - 6 \varepsilon + 1)\right],\\[5pt]
B^{\varepsilon}_{1\bar{1}3\bar{3}}& =&\frac{t^4}{2 (s^2-|u|^2)} \left[4\,\rho\varepsilon^2 -s^2 (2\varepsilon-1)(4\varepsilon - 1)\right] ,\\[5pt]
B^{\varepsilon}_{1\bar{2}2\bar{1}}& =& \frac{t^2}{2} \left[2\varepsilon (4\rho \varepsilon-\bar D) + (D-\lambda^2) (4\varepsilon^2 - 6 \varepsilon + 1)\right],\\[5pt]
B^{\varepsilon}_{1\bar{2}2\bar{2}}& =& \bar D B^{\varepsilon}_{1\bar{1}2\bar{1}},\\[5pt]
B^{\varepsilon}_{1\bar{2}3\bar{3}}& =& \frac{-t^4}{2 (s^2-|u|^2)} \left[4\,\sqrt{-1}\,\rho u \varepsilon^2 +(2\varepsilon-1)(4\varepsilon - 1) (\lambda s^2 + \sqrt{-1} u D)\right] ,\\[5pt]
B^{\varepsilon}_{1\bar{3}2\bar{3}}& =& \frac{2\,t^4\,\varepsilon^2}{s^2-|u|^2} \rho  (D+ s^2 - \sqrt{-1} \lambda \bar u) ,\\[5pt]
B^{\varepsilon}_{1\bar{3}3\bar{1}}& =&\frac{-t^4\,\varepsilon}{s^2-|u|^2} \left[s^2 + 2\varepsilon (\lambda^2- s^2 - 2 \lambda \Im u)\right],\\[5pt]
B^{\varepsilon}_{1\bar{3}3\bar{2}}& =&\frac{t^4\,\varepsilon}{s^2-|u|^2} \left[(4\varepsilon-1) (\lambda s^2 +\sqrt{-1} u D) - 2\varepsilon \bar D (\lambda + \sqrt{-1} u)\right],\\[5pt]
B^{\varepsilon}_{2\bar{1}3\bar{3}}& =&\overline{B^{\varepsilon}_{1\bar{2}3\bar{3}}},\\[5pt]
B^{\varepsilon}_{2\bar{2}3\bar{3}}& =&\frac{-t^4}{2(s^2-|u|^2)} \left[(2\varepsilon-1)(4\varepsilon-1) (\lambda^2 s^2 -2\lambda \Im (uD) + |D|^2) - 4s^2\rho\varepsilon^2\right],\\[5pt]
B^{\varepsilon}_{2\bar{3}3\bar{1}}& =&\overline{B^{\varepsilon}_{1\bar{3}3\bar{2}}},\\[5pt]
B^{\varepsilon}_{2\bar{3}3\bar{2}}& =&\frac{t^4\,\varepsilon}{s^2-|u|^2} \left[(2\varepsilon-1)|D|^2 + (4\varepsilon-1)\lambda  (\lambda s^2 -2 \Im (uD))\right].
\end{eqnarray*}

\section{Bianchi symmetries for the Levi-Civita connection on solvmanifolds $\frak g_1$ and $\frak g_2^{\alpha}$ in Family (Si)}\label{sec:app:g2a-Chern}

\begin{eqnarray*}
B^{0}_{1\bar{3}3\bar{1}}& =&\frac{2\,r^2 |u|^2}{r^2s^2-|u|^2},\qquad
B^{0}_{1\bar{3}3\bar{2}}=\frac{-2\,\sqrt{-1}\,r^2 s^2 u}{r^2s^2-|u|^2},\\[5pt]
B^{0}_{2\bar{3}3\bar{1}}& =&\overline{B^{\varepsilon}_{1\bar{3}3\bar{2}}},\qquad \qquad
B^{0}_{2\bar{3}3\bar{2}}=\frac{2\,s^2 |u|^2}{r^2s^2-|u|^2}.
\end{eqnarray*}

\section{Bianchi and Chern symmetries for the Gauduchon connections on solvmanifolds $\frak g_2^0$ in Family (Si)}\label{sec:app:g2a}

\begin{eqnarray*}
R^{\varepsilon}_{131\bar{3}}& =&\frac{\varepsilon (2\varepsilon-1) r^2 s^2 z^2}{8\sqrt{-1}\, \det\Omega},\\[5pt]
R^{\varepsilon}_{132\bar{3}}& =&\frac{-\varepsilon (2\varepsilon-1) r^2 s^2  v z}{8\sqrt{-1}\, \det\Omega},\\[5pt]
R^{\varepsilon}_{133\bar{3}}& =&\frac{\varepsilon\,z}{8 \det\Omega} \left[r^2s^2t^2 - 2\varepsilon r^2|v|^2 + 2(\varepsilon-1)s^2|z|^2\right],\\[5pt]
R^{\varepsilon}_{231\bar{3}}& =&-R^{\varepsilon}_{132\bar{3}},\\[5pt]
R^{\varepsilon}_{232\bar{3}}& =&\frac{\varepsilon (2\varepsilon-1) r^2 s^2 v^2}{8\sqrt{-1}\, \det\Omega},\\[5pt]
R^{\varepsilon}_{233\bar{3}}& =&\frac{\varepsilon\,v}{8 \det\Omega} \left[r^2s^2t^2 + 2(\varepsilon-1) r^2|v|^2 - 2 \varepsilon s^2|z|^2\right].\\[5pt]
B^{\varepsilon}_{1\bar{1}3\bar{3}}& =&\frac{-\varepsilon (2\varepsilon-1) r^2 s^2 |z|^2}{8\sqrt{-1}\, \det\Omega},\\[5pt]
B^{\varepsilon}_{1\bar{2}3\bar{3}}& =&\frac{\varepsilon (2\varepsilon-1) r^2 s^2 \bar v z}{8\sqrt{-1}\, \det\Omega},\\[5pt]
B^{\varepsilon}_{1\bar{3}3\bar{1}}& =&\frac{(2\varepsilon-1)(4\varepsilon-1) r^2 s^2 |z|^2}{16 \sqrt{-1}\, \det\Omega},\\[5pt]
B^{\varepsilon}_{1\bar{3}3\bar{2}}& =&\frac{-(2\varepsilon-1)(4\varepsilon-1) r^2 s^2 \bar v z}{16 \sqrt{-1}\, \det\Omega},\\[5pt]
B^{\varepsilon}_{1\bar{3}3\bar{3}}& =&\frac{z}{16 \det\Omega}\left[(4\varepsilon-1)r^2s^2t^2 + 2 (2\varepsilon^2-4\varepsilon+1) r^2 |v|^2 - 4\varepsilon^2 s^2 |z|^2\right],\\[5pt]
B^{\varepsilon}_{2\bar{1}3\bar{3}}& =&\overline{B^{\varepsilon}_{1\bar{2}3\bar{3}}},\\[5pt]
B^{\varepsilon}_{2\bar{2}3\bar{3}}& =&\frac{-\varepsilon (2\varepsilon-1) r^2 s^2 |v|^2}{8\sqrt{-1}\, \det\Omega},\\[5pt]
B^{\varepsilon}_{2\bar{3}3\bar{1}}& =&\overline{B^{\varepsilon}_{1\bar{3}3\bar{2}}},\\[5pt]
B^{\varepsilon}_{2\bar{3}3\bar{2}}& =&\frac{(2\varepsilon-1)(4\varepsilon-1) r^2 s^2 |v|^2}{16 \sqrt{-1}\, \det\Omega},\\[5pt]
B^{\varepsilon}_{2\bar{3}3\bar{3}}& =&\frac{v}{16 \det\Omega}\left[(4\varepsilon-1)r^2s^2t^2 - 4\varepsilon^2 r^2 |v|^2 + 2 (2\varepsilon^2-4\varepsilon+1)  s^2 |z|^2\right].
\end{eqnarray*}

\end{document}